\newtheorem{theorem}{Theorem}
\newtheorem{proposition}{Proposition}
\newtheorem{corollary}{Corollary}
\newtheorem{lemma}{Lemma}
\theoremstyle{definition}
\newtheorem{definition}{Definition}
\theoremstyle{remark}
\newtheorem*{remark*}{Remark}
\DeclareMathOperator{\re}{Re}
\DeclareMathOperator{\im}{Im}
\DeclareMathOperator{\Log}{Log}
\DeclareMathOperator{\res}{Res}
\newcommand{\tinrange}{{t\in[0,T_{\max{}})}}
\newcommand{\autfam}{\{M_t\}_\tinrange}
\newcommand{\utilde}{{\tilde{u}_{\lambda(t)}}}
\newcommand{\newutilde}{{-T^{\tilde{\sigma}} \circ M_t \circ h^{-1}_{u_t}(1)}}
\newcommand{\utildenot}{{\tilde{u}_{\tilde{t}}}}
\newcommand{\mathhyphen}{{\textrm{-}}}
\newcommand{\atilde}{{\tilde{a}_\utilde}}
\newcommand{\newatilde}{{\tilde{a}_\newutilde}}
\newcommand{\thetatilde}{{\tilde{\theta}_\utilde}}
\newcommand{\newthetatilde}{{\tilde{\theta}_\newutilde}}
\newcommand{\etilde}{{e^{i\thetatilde}}}
\newcommand{\newetilde}{{e^{i\newthetatilde}}}
\newcommand{\hutat}{h_{u_t}(A_t)}
\DeclareMathOperator{\arctanh}{arctanh}
\DeclareMathOperator{\sech}{sech}
\DeclareMathOperator{\id}{id}
\begin{document}
\title{General Slit Löwner Chains}
\author[Georgy Ivanov]{Georgy Ivanov$^{\dag}$}
\author{Alexey Tochin}
\author{Alexander Vasil'ev}

\address{Department of Mathematics,
University of Bergen, P.O.~Box~7803, Bergen N-5020, Norway}
\email{georgy.ivanov@math.uib.no}\email{alexey.tochin@math.uib.no}\email{alexander.vasiliev@math.uib.no}

\thanks{All authors have been  supported by EU FP7 IRSES program STREVCOMS, grant no. PIRSES-GA-2013-612669, by the grants of the Norwegian Research Council \# 204726/V30, \# 213440/BG\@. The author$^{\dag}$ has also been supported by Meltzerfondet.}


\subjclass[2010]{30C35, 34M99, 60D05, 60J67}

\keywords{L\"owner equation, stochastic flows, general L\"owner theory, SLE, slit evolution, SDE}

\date{04/04 2014 }

\begin{abstract}
We use general Löwner theory to define general slit Löwner chains in the unit disk, which in the stochastic case lead to slit   holomorphic stochastic flows. Radial, chordal and dipolar $SLE$ are classical examples of such flows. Our approach, however, allows to  construct new processes of $SLE$ type that possess conformal invariance and the domain Markov property. The local behavior of these processes is similar to that of classical $SLEs$.
\end{abstract}

\maketitle

\section{Introduction}\label{introd}
The classical Löwner theory was introduced in 1923 by Karl Löwner (Charles Loewner)~\cite{Loewner}, and was later developed by Kufarev~\cite{Kufarev, Kufarev47Tomsk} \/ and Pommerenke~\cite{Pommerenke1, Pommerenke2}. The Löwner differential equation became one of the most powerful tools for solving extremal problems in the theory of univalent functions, culminating in the proof of the Bieberbach conjecture by  de Branges in 1984~\cite{Branges}.

In the modern period, Löwner theory has again attracted a lot of interest due to discovery of Stochastic (Schramm)-L\"owner Evolution ($SLE$), a stochastic process that has made it possible to describe analytically the scaling limits of several two-dimensional lattice models in statistical physics, see \cite{SLE1, Schramm}. Several other connections with mathematical physics have also been 
discovered, in particular, relations between $SLE$ martingales and singular representations of the Virasoro algebra in \cite{BB02, Friedrich2003a, MK}, a Hamiltonian formulation of the Löwner evolution and relations to KP integrable hierarchies in \cite{MarkinaVasiliev10, MarkinaVasiliev12,Takebe06}.

In line with these important achievements, the classical (deterministic) L\"owner  theory itself has undergone remarkable development, so that its various versions have been realized as special cases of the general L\"owner theory, see~\cite{BCDV14}.

$SLE$ theory focuses on describing probability measures on families of curves  which possess the property of {\it conformal invariance} \/and the {\it domain Markov property}. So far, the following types of $SLE$ have been studied: the {\it chordal $SLE$}~\cite{SLE1, RohdeSchrammBasic, Schramm}, the {\it radial $SLE$}~\cite{LawlerBook, RohdeSchrammBasic}, the {\it dipolar $SLE$}~\cite{BBernardHoudayer}, and $SLE(\kappa,\rho)$~\cite{Dub05, Lawler03, Werner05}. The random curves are described by the sets of initial conditions  for which the solution to the corresponding $SLE$ differential equation blows up in finite time (\emph{$SLE$ hulls}). Due to the aforementioned special properties of the measures it is possible to reformulate these differential equations as stochastic differential equations (diffusion equations with holomorphic coefficients).

In this paper we address the following questions. What are other possible diffusion equations with holomorphic coefficients that generate random families of curves? How similar are the properties of these curves to the properties of $SLE$ curves?

The paper is organized as follows. 

In Section~\ref{sec:Preliminaries} we briefly review the definitions of the classical $SLE$ processes, formulate their conformal invariance and domain Markov property, cite main results of the general Löwner theory and explain how it incorporates chordal, radial and dipolar theories as special cases. 

In Section~\ref{sec:Witt} we show how the Virasoro generators $\ell_n,$ $n=-2,\ldots,1$, can be used for representing complete vector fields and vector fields generating slit semiflows (\emph{slit holomorphic vector fields}). Complete vector fields admit the representation
\[
 \sigma(z) = \sigma_{-1} \ell_{-1}(z) + \sigma_{0} \ell_{0}(z) + \sigma_1 \ell_1(z), \quad \sigma_{-1},\sigma_{0}, \sigma_{1} \in \mathbb{R},
\]
and slit holomorphic vector fields have the following convenient representation
\begin{equation}
\label{eq:breprintro}
 b(z) = b_{-2} \ell_{-2}(z) + b_{-1} \ell_{-1}(z) + b_{0} \ell_0(z) + b_{1} \ell_1(z),
\end{equation}
where $b_{-2}>0,$ $b_{-1}, \, b_{0}, \, b_1 \in \mathbb{R}$.

In Section~\ref{sec:Kunita} we extend Kunita's results on flows of stochastic differential equations (see, e.g., \cite{Kunita1984}) to the case of semicomplete fields, giving sufficient conditions for a diffusion equation to generate a flow of holomorphic endomorphisms of the unit disk. 

In Section~\ref{sec:GSLE} we combine the results of Sections~\ref{sec:Witt} and~\ref{sec:Kunita} and define general slit Löwner chains, which  possess the property of conformal invariance and the domain Markov property. These chains are described by the \emph{general slit Löwner equation}
\[
 \begin{cases}
  \frac{\partial}{\partial t} g_t(z) =  - V(t,g_t(z)), \quad t\geq 0,\\
  g_0(z) = z, \quad z\in \mathbb{D},
 \end{cases}
\]
where
$ V(t,z) :=  \frac{1}{h'_{u_t}(z)} \,b(h_{u_t}(z)),
$
and $b(z)$ is a vector field of form \eqref{eq:breprintro}. The family $\{h_{t}\}_{t\in \mathbb{R}}$ is a flow (one-parameter group) of automorphisms of the unit disk generated by a complete field $\sigma(z)$. The driving function $u_t$ is an analog of the driving function from the classical Löwner theory. We always normalize $b$ and $\sigma$ in such a way that $b_{-2} = 2$ and $\sigma_{-1} = 1$.

If we put $u_t = \sqrt{\kappa} B_t$ ($\kappa\geq 0,$ and $B_t$ is a standard Brownian motion), then the process $G_t(z) = h_{u_t} \circ g_t (z)$ solves the Stratonovich SDE
\begin{equation}
 \begin{cases}
  dG_t(z) = -b(G_t(z)) \, dt + \sqrt{\kappa} \,\sigma(G_t(z)) \circ dB_t,\\
G_0(z) = z,
 \end{cases} z\in \mathbb{D}.
\end{equation}

 The radial, chordal and dipolar $SLEs$ correspond to particular choices of $b(z)$ and $\sigma(z)$. Our approach  allows to  construct new processes that possess conformal invariance and the domain Markov property, and we call the stochastic flow $G_t(z)$ a \emph{slit holomorphic stochastic flow} or $(b,\sigma)$-$SLE_\kappa$.  

Another framework providing a unified treatment of the radial, chordal and dipolar $SLEs$ is the $SLE(\kappa, \rho)$ theory, where the usual chordal Löwner equation is used, but the driving function is a complicated stochastic process. In our approach we always use a multiple of the standard Brownian motion as the driving function, but modify the vector fields $b$ and $\sigma$, so that the evolution is described by a single diffusion equation.

We propose a classification for the family of such processes, identifying processes that can be obtained one from another by means of simple transformations.

In Section~\ref{sec:relations} we show how the evolution of hulls generated by a general slit Löwner equation can be described in terms of the radial Löwner equation, and also how two general slit Löwner equations are related to each other. We use this technique in Section~\ref{sec:geometry} to prove that, similarly to the classical cases, the hulls of these processes are generated by a curve almost surely, and the results about local properties of the generated curves can be transferred from the classical $SLE$ theory to this general setting.

The Appendices contain some necessary background theory, such as theory of holomorphic semiflows (one-parameter semigroups), necessary elements of
differential geometry and the theory of stochastic flows in the unit disk.

\medskip
\noindent
{\bf Acknowledgments.} The authors would like to thank Nam-Gyu Kang and Pavel Gumenyuk for their comments on the original draft of the paper.

\section{Preliminaries}
\label{sec:Preliminaries}

\subsection{Classical L\"owner equations}
We call the chordal, radial and dipolar Löwner equations, as well as the related domain evolutions, classical, in contrast to other evolutions that can be described by the general L\"owner theory. A common feature of the classical equations is that all the maps constituting the evolution share at least one interior or boundary common fixed point.

\subsubsection*{Chordal L\"owner equation}

Let $u_t:[0,+\infty)\to \mathbb{R}$ be a continuous function of $t$ (the \emph{driving function}), and let $\{g_t\}_{t\geq 0}$ be the solution to the \emph{chordal Löwner equation driven by $u_t$}:
\begin{equation}
\label{eq:chordLoewner}
 \begin{cases}
  \frac{\partial }{\partial t} g_t(z) = \frac{2}{g_t(z) - u_t},\\
  g_0(z)= z,
 \end{cases} z\in \mathbb{H}.
\end{equation}

The family $\{g_t\}_{t\geq 0}$ is called the \emph{chordal Löwner chain} driven by $u_t$.

Denote by $H_t$ the set of all $z\in \mathbb{H},$ for which the solution $g_t(z)$ to (\ref{eq:chordLoewner}) is defined at time $t$. Then for each $t\geq 0,$ $H_t$ is a simply connected domain, called the \emph{evolution domain of \eqref{eq:chordLoewner} at time $t$}. The function $g_t(z)$ maps $H_t$ conformally onto $\mathbb{\mathbb{H}}$  and has the following behavior at infinity
\[
g_{t}(z) = z + \frac{2t}{z} + O(1/|z|^2), \quad z \to \infty,
\]
so that, in particular,  $\infty$ is the common fixed boundary point of the maps $\{g_t(z)\}_{t\geq 0}$

Let $\hat{\mathbb{H}}$ denote the closure of $\mathbb{H}$.  We say that the family of evolution domains $\{H_t\}_{t\geq 0}$ is \emph{generated by a curve} $\gamma:[0,+\infty)\to \hat{\mathbb{H}}$, if $H_t$ is the unbounded component of $\mathbb{H}\setminus \gamma[0,t]$. For an arbitrary continuous driving function $u_t,$ it is not true in general that the evolution domains $\{H_t\}_{t\geq 0}$ of the corresponding chordal Löwner equation  are generated by a curve. 

By putting $u_t=\sqrt{\kappa} B_t$ ($\kappa \geq 0,$ and $B_t$ is a standard Brownian motion), we obtain random families of conformal maps $\{g_t(z)\}_{t\geq 0}$ known as the chordal $SLE_\kappa$.  It is known that in this case the corresponding evolution domains are almost surely generated by a curve. The corresponding random family of curves is also called $SLE_\kappa,$ and is the main object of interest in $SLE$ theory.

If $\{g_t\}_{t\geq 0}$ is a chordal $SLE_\kappa$ (driven by $\sqrt{\kappa}B_t$), then for a $r>0$ the maps $\tilde{g}_t:= r^{-1} g_{r^2 t}(r z)$ are driven by $\sqrt{\kappa} \,r^{-1} B_{r^2 t} = \sqrt{\kappa} \tilde{B}_t$. Since $\tilde{B}_t$ is also a standard Brownian motion, $\tilde{g}_t$ has the distribution of $SLE_\kappa$. This property of chordal $SLE$ is called \emph{chordal $SLE$ scaling}.

\subsubsection*{Radial L\"owner equation} 
Again, let $u_t:[0,+\infty)\to \mathbb{R}$ be a continuous function (\emph{the driving function}), and  $\{g_t(z)\}_{t\geq 0}$ be the solution to the \emph{radial Löwner equation} driven by $u_t:$
\begin{equation}
\label{eq:radLoewner}
 \begin{cases}
  \frac{\partial}{\partial t} g_t(z) = g_t(z) \, \frac{e^{i u_t}+ g_t(z)}{e^{iu_t}-g_t(z)},\\
  g_0(z) = z,
 \end{cases}\\z\in \mathbb{D},
\end{equation}
The family $\{g_t(z)\}_{t\geq 0}$ is called the \emph{radial Löwner chain} driven by $u_t$.

Denote by $D_t$ the set of all $z\in \mathbb{D},$ for which the solution $g_t(z)$ to (\ref{eq:radLoewner}) is defined at  time $t$. Then $D_t$ is a simply connected domain, called the evolution domain of equation \eqref{eq:radLoewner} at time $t$. The function $g_t$ maps $D_t$ conformally onto $\mathbb{D},$ and, moreover, $g_t(0) = 0,$ $g'_t(0)= e^t$.

We say that the family of evolution domains $\{D_t\}_{t\geq 0}$ of a radial Löwner equation  is \emph{generated by the curve} $\gamma:[0,+\infty)\to \mathbb{D},$  if $D_t$ is the connected component of $\mathbb{D} \setminus \gamma[0,t]$  containing $0$. For an arbitrary driving function $u_t$, it is not true that the corresponding family of evolution domains is generated by a curve.

If we put $u_t = \sqrt{\kappa} B_t$, $\kappa \geq 0$  then the random family of domains $\{D_t(\omega)\}_{t\geq 0}$ is almost surely generated by a random curve $\gamma$. These random families of curves $\gamma$ (as well as the corresponding random families of domains $D_t$, and the corresponding random families of conformal maps $g_t(z)$) are referred to as \emph{the radial Schramm-Löwner evolution} $SLE_\kappa$.

\subsubsection*{Dipolar L\"owner equation}
Dipolar Löwner equation is usually formulated in the infinite strip $\mathbb{S}:= \{z:0<\im z< \pi\}:$

\begin{equation}
\label{eq:dipolar}
 \begin{cases}
  \frac{\partial}{\partial t} g_t(z) = \frac{1}{\tanh[(g_t(z) - u_t)/2]},\\
   g_0(z) = z,
 \end{cases} z\in \mathbb{S},
\end{equation}
where the driving function $u_t$ is a continuous real-valued function, as in the previous two cases. 
We denote by $S_t$ the set of all $z\in \mathbb{S}$  for which the solution to (\ref{eq:dipolar}) exists (the evolution domain of dipolar Löwner equation \eqref{eq:dipolar} at time $t$). The function $g_t(z)$ maps $S_t$ conformally onto $\mathbb{S},$ and 
fixes two points, $+\infty$ and $-\infty$, at the boundary of $\mathbb{S}$.

If we set $u_t = \sqrt{\kappa}B_t,$ then the family of evolution domains $\{S_t\}_{t\geq 0}$ is almost surely generated by a random curve $\gamma(t)$  (that is, $S_t$ is the unbounded component of  $\mathbb{S}\setminus \gamma[0,t]$). The random curve $\gamma(t)$, as well as the associated maps, are called the dipolar $SLE_\kappa$.

\subsection{Conformal invariance and domain Markov property of classical \texorpdfstring{$SLEs$}{SLEs}}
\label{subsec:confinvdmp}
The two properties which make $SLE$ processes so important for applications in statistical physics are \emph{the conformal invariance} and \emph{the domain Markov property}. The formulations of these properties differ slightly for radial, chordal and dipolar $SLE$. We formulate them in detail only for the radial case, and then, briefly mention how they can be reformulated for the other two cases.

\subsubsection{Boundary behavior of conformal isomorphisms}  Let us first recall some basic facts about boundary behavior of conformal maps.  By the Riemann mapping theorem, for any simply connected (s.c.) hyperbolic domain $D \subset \hat{\mathbb{C}}$ there exists a conformal isomorphism $f: \mathbb{D} \to D$.  If the boundary $\partial D$ of $D$ is a Jordan curve, then by Carathéodory's theorem it is possible to extend the map $f$ to a homeomorphism $f:\hat{D} \to \hat{G}$. If the boundary $\partial D$ is only locally connected, it is still possible to construct a continuous (but not injective) extension $f: \hat{\mathbb{D}} \to \hat{D}$. If the boundary $\partial D$ is not locally connected, then there exists no continuous extension  of $f$ to $\hat{\mathbb{D}}$ at all.

Instead of the boundary points of $D$ one can consider the set of \emph{prime ends} of $D$, denoted by $P(D)$. Then the union $D\cup P(D)$ can be endowed with a natural topology, so that every conformal isomorphism $f:\mathbb{D} \to D$ extends uniquely to a homeomorphism $f:\hat{\mathbb{D}} \to D\cup P(D)$ (see \cite[Chapter 17]{MilnorDynamics} for an introduction to the theory of prime ends).

We can summarize this in the following version of a Riemann mapping theorem.

\begin{theorem}[The Riemann mapping theorem]
Let $D \subset \hat{\mathbb{C}}$ be a simply connected hyperbolic domain, let $P(D)$ denote the set of its prime ends, $a\in D,$ and let the prime ends $b_1,$ $b_2,$ $b_3 \in P(D)$ be distinct and ordered anticlockwise. Then there exists a conformal isomorphism
\[
 f: \mathbb{D} \to D,
\]
which can be extended to a map
\[
 f : \hat{\mathbb{D}} \to D\cup P(D),
\]
which is a homeomorphism with respect to the natural topology of $D\cup P(D)$. Moreover, the map $f$ can be specified uniquely by imposing any of the following three normalization conditions
\begin{enumerate}
 \item  $f(0) = a$ and $f'(0) >0,$
 \item $f(0)= a$ and $f(1) = b_1,$
\item $f(-i) = b_1,$ $f(1) = b_2,$  and $f(i) = b_3$.
\end{enumerate}

\end{theorem}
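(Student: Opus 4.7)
The plan is to establish the statement in three stages corresponding to its three clauses: existence of a conformal isomorphism $f : \mathbb{D} \to D$, its continuous extension to a homeomorphism $\hat{\mathbb{D}} \to D \cup P(D)$, and uniqueness under each of the three normalization conditions.

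First I would prove existence by the standard normal-families argument. Fix $a \in D$ and consider the family $\mathcal{F}$ of univalent holomorphic maps $f : \mathbb{D} \to D$ with $f(0) = a$. Since $D$ is hyperbolic, $\mathcal{F}$ is nonempty (one produces a member by composing a translation, a branch of the square root that opens a gap in $\hat{\mathbb{C}} \setminus D$, an inversion, and a scaling). The family is locally uniformly bounded, so Montel's theorem yields normality, and a sequence maximizing $|f'(0)|$ has a subsequential limit $f_{0}$, which is univalent by Hurwitz (positivity of the derivative at $0$ rules out a constant limit). I would then show $f_{0}(\mathbb{D}) = D$ by the classical square-root trick: if some $w_{0} \in D \setminus f_{0}(\mathbb{D})$ existed, composing $f_{0}$ with a suitable M\"obius map, a branch of the square root, and another M\"obius map would produce an element of $\mathcal{F}$ whose derivative at $0$ exceeds $|f_{0}'(0)|$, contradicting extremality.

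Next I would handle the boundary extension via Carath\'eodory's prime-end theory. Recall that a crosscut of $D$ is a Jordan arc in $D$ whose endpoints approach $\partial D$, a null chain is a nested sequence of pairwise disjoint crosscuts whose diameters tend to zero and which eventually separates $a$ from the later crosscuts, and $P(D)$ is the set of equivalence classes of null chains. Under $f^{-1}$, the image of a null chain $(C_{n})$ is a sequence of crosscuts of $\mathbb{D}$; the length-area lemma (equivalently, the modulus estimate for the doubly connected regions cut off by consecutive crosscuts) forces the Euclidean diameters of $f^{-1}(C_{n})$ to tend to zero, so these crosscuts accumulate at a unique point of $\partial \mathbb{D}$. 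This yields the extension $f : \hat{\mathbb{D}} \to D \cup P(D)$. Endowing $D \cup P(D)$ with the topology whose basic neighborhoods of a prime end are the components of $D$ cut off by the defining crosscuts, one verifies that the extended $f$ is a continuous bijection between compact Hausdorff spaces, hence a homeomorphism.

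Finally I would dispatch uniqueness using the Schwarz lemma and the computation of $\operatorname{Aut}(\mathbb{D})$. Given two candidates $f_{1}, f_{2}$ satisfying (1), the composition $f_{2}^{-1} \circ f_{1}$ is an automorphism of $\mathbb{D}$ fixing $0$ with positive derivative at $0$, hence the identity. For (2), it is an automorphism fixing $0$ and one boundary prime end, which again forces the identity. For (3), it is an orientation-preserving automorphism of $\mathbb{D}$ fixing three boundary prime ends, and since $\operatorname{Aut}(\mathbb{D})$ is a three-real-parameter group acting simply transitively on ordered triples of boundary points, it is the identity as well. The main obstacle will lie entirely in the second stage: verifying the diameter-shrinking property of $f^{-1}(C_{n})$ requires a genuine conformal-geometric estimate via the length-area principle, and matching the prime-end topology with that of $\hat{\mathbb{D}}$ on a base of neighborhoods requires some care beyond the pointwise construction. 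The existence and uniqueness parts are then essentially soft.
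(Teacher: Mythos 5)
The paper does not prove this statement at all: it is quoted as classical background, namely the Riemann mapping theorem combined with Carath\'eodory's prime-end theory, with a pointer to \cite[Chapter 17]{MilnorDynamics} for the prime-end machinery. Your proposal is therefore not an alternative to anything in the paper; it is the standard textbook proof, and as an outline it is essentially correct: normal families and the square-root trick for existence, Carath\'eodory's prime-end correspondence (via length--area/extremal-length estimates) for the homeomorphic extension to $\hat{\mathbb{D}} \to D \cup P(D)$, and the Schwarz lemma together with the structure of $\mathrm{Aut}(\mathbb{D})$ (simple transitivity on positively oriented boundary triples) for uniqueness under the three normalizations. You correctly locate the real work in the second stage; that stage is precisely Carath\'eodory's theorem and would in practice also just be cited.

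One step of your first stage needs repair as written. Since $D \subset \hat{\mathbb{C}}$ may be unbounded or contain $\infty$, the family $\mathcal{F}$ of univalent maps $f:\mathbb{D}\to D$ with $f(0)=a$ is \emph{not} in general locally uniformly bounded, so Montel's theorem does not apply directly (take $D$ a half-plane, for instance). The standard fix is to run the square-root/inversion construction the other way: first send an omitted point to $\infty$ by a M\"obius map, use the square root and an inversion to embed $D$ conformally onto a bounded domain (or directly maximize $|g'(a)|$ over injective $g:D\to\mathbb{D}$ with $g(a)=0$), prove surjectivity there by the extremality argument, and transport back. Also note that in normalizations (2) and (3) the boundary data are prime ends, so the uniqueness argument tacitly uses that the prime-end extension of the disk automorphism $f_2^{-1}\circ f_1$ agrees with its M\"obius extension to $\partial\mathbb{D}$ -- true because prime ends of $\mathbb{D}$ are canonically its boundary points, but worth saying; and the anticlockwise ordering of $b_1,b_2,b_3$ is what guarantees \emph{existence} of the normalization in case (3), not just its uniqueness.
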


We say that a curve $\gamma:(0,s] \to D$ lands at a prime end  $p\in P(D)$ if $\lim_{t\to 0} \gamma(t) = p$ in the topology of $D\cup P(D)$.

\subsubsection{Measures on spaces on curves}

The set of non-parameterized curves in $\mathbb{C}$ can be endowed with a metric, and consequently, with a metric topology and with the Borel $\sigma$-algebra (see, e.g., \cite[Chapter 5]{LawlerBook}). \emph{Non-self-traversing curves}, by definition, are limits of sequences of simple curves in this topology.

Let $D$ be a simply connected hyperbolic domain in $\hat{\mathbb{C}}$. Let $b\in D$ and $a \in P(D)$. Consider the family of curves
\[
 \Omega_{(D,a,b)} = \{ \gamma: \gamma \textrm{ is a non-self-traversing curve connecting $a$ and $b$}\},
\]
which can be given a natural topology  and the corresponding Borel $\sigma$-algebra, as before.

Now, let $\mathcal{T}$ be the set of all possible triples
\[
 \mathcal{T}:=\{(D,a,b) : D \textrm{ is a s.c.\ hyperbolic domain}, a \in P(D), b\in D\}.
\]

Note that according to the Riemann mapping theorem, for each pair $(D,a,b), (D',a',b') \in \mathcal{T},$ there exists a unique conformal isomorphism $\phi:D \to D'$ that can be extended to a homeomorphism $\phi:D\cup P(D) \to D'\cup P(D'),$ such that $\phi(a) = a',$ $\phi(b) = b'$.

Let $M$ be a family of measures indexed by $\mathcal{T}$, that is,
\[
 M  = \{\mu_{(D,a,b)} : (D,a,b) \in \mathcal{T}, \,\mu_{(D,a,b)} \textrm{ is a measure on } \Omega_{D,a,b}\}.
\]

\begin{definition}[Conformal invariance]
 We say that a family of measures $M$ indexed by $\mathcal{T}$ is conformally invariant if for each pair of triples $(D,a,b)$ and $(D', a', b')$ from $\mathcal{T}$ 
\[
 \mu_{(D',a',b')} =\phi_*\mu_{(D,a,b)},
\]
where $\phi:D\to D'$ is the unique conformal isomorphism sending $a\mapsto a',$ $b\mapsto b'$, and $\phi_*\mu$ denotes the pushforward of $\mu$ by $\phi$. 
\end{definition}
 
Note, that given a measure on $\Omega_{(D,a,b)}$ we can construct a conformally invariant family of measures indexed by $\mathcal{T}$ simply by pushing forward the original measure to all elements of $\mathcal{T}$.

\begin{definition}[Domain Markov property] We say that a family of measures indexed by $\mathcal{T}$ satisfies  the domain Markov property if for any Borel set $B$ the conditional law $\mu_{(D,a,b)} (B \,| \,\gamma[0,s] = \gamma_0)$ is such that
\[
 \mu_{(D,a,b)} (B \,| \,\gamma[0,s] = \gamma_0) = \mu_{(D_s, \gamma_0(s), b)}(B),
\]
 where $D_s$ denotes the connected component of $D \setminus \gamma_0$ containing $b$.
\end{definition}

Combined with the conformal invariance, this becomes
\[
 \mu_{(D,a,b)}(\, \cdot \,| \,\gamma[0,s]=\gamma_0) = \phi_* \mu_{(D_s,\gamma_0(s),b)} (\,\cdot \,),
\]
where $\phi:D \to D_s$ is the unique conformal isomorphism sending $a \to 
\gamma_0(s)$ and leaving $b$ unchanged.

\subsubsection{Radial $SLE$ measure}
Let $\kappa\geq 0$. The equation  
\[
 \begin{cases}
  \frac{\partial}{\partial t} g_t(z) = g_t(z) \, \frac{e^{i \sqrt{\kappa} B_t}+ g_t(z)}{e^{i\sqrt{\kappa}B_t}-g_t(z)},\\
  g_0(z) = z,
 \end{cases}\\z\in \mathbb{D},
\]
generates a random family of evolution domains, which is almost surely generated by simple curves connecting $1$ and $0$, thus inducing a probability measure on the family $\Omega_{(\mathbb{D}, 1, 0)}$. We denote this measure by $SLE^r_\kappa (\mathbb{D},1, 0)$. We can transfer this measure to all other triples $(D,a, b)\in \mathcal{T}$ in a conformally invariant way, i.e., by setting $SLE^r_{\kappa}(D, a, b) = \phi_* SLE^r_{\kappa}(D, a, b)$  where $\phi:\mathbb{D}\to D$ is the unique conformal isomorphism sending $1$ to $a$ and $0$ to $b$.

To see that this measure satisfies the domain Markov property, we use the fact that the family $G_t(z) := g_t(z)/e^{i\sqrt{\kappa} B_t}$ is the solution to the initial-value problem
\begin{equation}
\label{eq:radialSDE}
 dG_t(z) = G_t(z) \frac{1+ G_t(z)}{1-G_t(z)} \,dt - i \sqrt{\kappa} G_t  \circ dB_t, \quad  G_0(z) = z, \quad z \in \mathbb{D}.
\end{equation}
This is a time-homogeneous diffusion equation, therefore, $G_t(z)$ is a continuous time-homogeneous Markov process. In particular, the law of the process $G_{s+t} \circ G^{-1}_s$ conditioned on $G_s$ is the same as the law of $G_t$.

\subsubsection{Chordal and dipolar $SLE$ measures}
The definitions of conformal invariance and the domain Markov property for the other two cases are formulated analogously, with slight modifications which we describe below.

In the chordal  case, we define
\[
\mathcal{T}:= \{(D,a,b) | D \textrm{ is a s.c.\ hyperbolic domain, } a,b \in P(D)\},
\]
and
\[
 \Omega_{(D,a,b)} = \{ \gamma: \gamma \textrm{ is a non-self-traversing curve connecting $a$ and $b$}\}.
\]
Here, for a pair $(D,a,b), (D',a',b')\in \mathcal{T},$ the map $\phi:D\to D',$ sending $a\mapsto a'$ and $b\mapsto b'$ is not defined uniquely. However, we require that the chordal $SLE^c_{\kappa} (D,a,b)$ measure is invariant under conformal automorphisms of  $D$ leaving $a$ and $b$ invariant, which corresponds precisely to the scaling property of the chordal Löwner equation.

In the chordal case, the family $G_t := g_t(z) - \sqrt{\kappa} \, B_t$ also satisfies the diffusion equation:
\begin{equation}
\label{eq:chordalSDE}
 dG_t(z) = \frac{2}{G_t(z)} \,dt - \sqrt{\kappa} \, dB_t, \quad G_0(z) = z, \, z\in \mathbb{H}.
\end{equation}

In the dipolar case,
\begin{multline*}
\mathcal{T}:= \{(D,a,b,c) | D \textrm{ is a s.c.\ hyperbolic domain, } 
\\a,b,c \in P(D), \textrm{ ordered anticlockwise}\},
\end{multline*}
and
\begin{multline*}
 \Omega_{(D,a,b,c)} = \{ \gamma: \gamma \textrm{ is a non-self-traversing curve} 
\\\textrm{connecting $a$ and some point on the arc from $b$ to $c$}\}.
\end{multline*}

The diffusion equation corresponding to the dipolar $SLE$ equation is
\begin{equation}
\label{eq:dipolarSDE}
  d G_t(z) = \frac{1}{\tanh[G_t(z)/2]} - \sqrt{\kappa} B_t,\quad  G_0(z) = z, \quad z\in \mathbb{S},
\end{equation}
where $G_t(z) := g_t(z) - \sqrt{\kappa} B_t$

\subsection{General L\"owner theory}
The general L\"owner theory was developed in \cite{BracciEvolutionI, Bracci2, gumenyuk10}, and includes chordal, radial and dipolar Löwner theory as particular cases. In a certain sense, it is a  non-autonomous generalization of the theory of holomorphic semiflows (see Appendix~\ref{app:semigroups}).

\begin{definition}\label{evolution}
An evolution family of order $d\in[1,+\infty]$ is a two-parameter family  $\{g_{s,t}\}_{0\leq s\leq t<+\infty}\subset \mathrm{Hol}(\mathbb{D},\mathbb{D})$, such that the following three conditions are satisfied.
\begin{itemize}
 \item $g_{s,s} = id_{\mathbb{D}}$;
 \item $g_{s,t} = g_{u,t} \circ g_{s,u}$ for all $0 \leq s \leq u \leq t <+\infty$;
 \item for any $z \in \mathbb{D}$ and $T > 0$ there is a non-negative function $k_{z,T} \in L^d([0,T],\mathbb{R}),$ such that
\[
 |g_{s,u}(z) - g_{s,t}(z)| \leq \int_u^t k_{z,T}(\xi) d\xi, \quad z\in \mathbb{D}
\]
for all $0 \leq s \leq u \leq t \leq T$.
\end{itemize}
\end{definition}

An infinitesimal description of evolution families is given in terms of {\it Herglotz vector fields}.

\begin{definition}
 A (generalized) Herglotz vector field of order $d$ is a function \linebreak $V:\mathbb{D} \times [0,+\infty) \to \mathbb{C}$  satisfying the following conditions:
\begin{itemize}
 \item the  function $[0, +\infty) \ni t \mapsto V(t,z)$ is measurable for every $z\in \mathbb{D}$;
\item the function $z \mapsto V(t,z)$ is holomorphic in the unit disk for  $t \in [0,+\infty)$;
\item for any compact set $K \subset \mathbb{D}$ and for every  $T>0,$ there exists a non-negative function $k_{K,T} \in L^d([0,T],\mathbb{R}),$ such that
\[
 |V(t,z)|\leq k_{K,T}(t)
\]
for all $z\in K,$ and for  almost every $t \in [0,T]$;
\item for almost every $t\in[0,+\infty),$ the function $V(\cdot, t)$ is a semicomplete vector field.
\end{itemize}
\end{definition}

An important result of the general L\"owner theory is the fact that the evolution families can be put into a one-to-one correspondence with the Herglotz vector fields by means of the so-called generalized L\"owner ODE (in the same manner as semiflows can be put into a one-to-one correspondence with semicomplete vector fields in the autonomous case by means of equation (\ref{eq:semigroupivp}), Appendix~\ref{app:semigroups}). This can be formulated as the following theorem.
\begin{theorem}[{\cite[Theorem 1.1]{BracciEvolutionI}}]
\label{thm:LoewnerEq}
For any evolution family $\{g_{s,t}\}$ of order $d \geq 1$ in the unit disk there exists an essentially unique Herglotz vector field $V(t,z)$ of order $d,$ such that 
\begin{equation}
\label{eq:evolution}
\begin{cases}
 \frac{\partial g_{s,t}(z)}{\partial t} = V(t, g_{s,t}(z)),\\
g_{s,s}(z) = z,
\end{cases}
\end{equation}
for all $z\in \mathbb{D}$ and for almost all $t\in[s,+\infty)$.
Conversely, for any Herglotz vector field $V(t,z)$ of order $d\geq 1$ in the unit disk there exists a unique evolution family of order $d$, such that the equation above is satisfied.
\end{theorem}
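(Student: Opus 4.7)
The plan is to treat the two implications separately, in the spirit of the classical semigroup/generator correspondence, with the non-autonomous $t$-dependence handled by Carathéodory-type regularity. For the forward direction, I would begin from the third axiom of an evolution family, which says that $t\mapsto g_{s,t}(z)$ is locally absolutely continuous with an $L^d$-majorant $k_{z,T}$ depending only on $z$ and $T$. Hence for each fixed $s$ and $z$ this function is differentiable almost everywhere in $t$. Using the semigroup identity $g_{s,t+h}=g_{t,t+h}\circ g_{s,t}$, the $t$-derivative can be rewritten as
$$
\partial_t g_{s,t}(z)=\lim_{h\to 0^+}\frac{g_{t,t+h}(g_{s,t}(z))-g_{s,t}(z)}{h},
$$
which motivates defining $V(t,w):=\lim_{h\to 0^+}(g_{t,t+h}(w)-w)/h$ at each $w$ where the limit exists. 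The technical core is to produce, via Fubini combined with the Lebesgue differentiation theorem applied to a countable dense subset of $\mathbb{D}$, a single null set $N\subset[0,+\infty)$ off of which the limit exists at every $w\in\mathbb{D}$ and defines a holomorphic function of $w$; here one uses that each $g_{t,t+h}$ is a holomorphic self-map of $\mathbb{D}$, so Schwarz--Pick bounds and normal-family arguments upgrade a.e.\ pointwise convergence to locally uniform convergence in $z$. Semicompleteness of $V(t,\cdot)$ for $t\notin N$ is then inherited from the self-map property of $g_{t,t+h}$ via the standard characterization of infinitesimal generators of holomorphic semiflows reviewed in Appendix~\ref{app:semigroups}. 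Measurability of $t\mapsto V(t,z)$ follows from its expression as a pointwise a.e.\ limit of measurable difference quotients, and the $L^d$-bound on compacts is inherited from the evolution-family axiom, so $V$ is a Herglotz vector field of order $d$.

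For the converse, I would fix $s\geq 0$ and $z\in\mathbb{D}$ and apply Carathéodory's existence theorem to $\partial_t w=V(t,w)$, $w(s)=z$: the $L^d$ bound on $V$ on compact subsets of $\mathbb{D}$ gives local existence, and the semicompleteness of $V(t,\cdot)$ for a.e.\ $t$ prevents the solution from reaching $\partial\mathbb{D}$ in finite time, so the solution $g_{s,t}(z)$ is defined for all $t\geq s$. Holomorphy in $z$ follows from a Morera-plus-dominated-convergence argument applied to the integral form $g_{s,t}(z)=z+\int_s^t V(\xi,g_{s,\xi}(z))\,d\xi$, the cocycle property $g_{s,t}=g_{u,t}\circ g_{s,u}$ is immediate from uniqueness of Carathéodory solutions, and the $L^d$-absolute continuity axiom of the evolution family is read off from the same integral representation together with the $L^d$-bound on $V$. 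Essential uniqueness of the Herglotz field drops out as well: if $V_1,V_2$ both satisfy the ODE for the same family, then $V_1(t,z)=V_2(t,z)$ for every $z$ and a.e.\ $t$, and holomorphy in $z$ collapses the exceptional null set into a single one independent of $z$.

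The main obstacle is the forward direction, and within it the interchange of the quantifiers ``for a.e.\ $t$'' and ``for every $z$'' when defining $V$; the naive construction yields a null set of bad times that a priori depends on $z$, and it takes a countable dense exhaustion of $\mathbb{D}$ combined with equicontinuity (Schwarz--Pick) to consolidate these into one null set. Once that is arranged, the second delicate point is the transfer of semicompleteness from the discrete family $\{g_{t,t+h}\}_{h>0}$ to the limiting infinitesimal generator $V(t,\cdot)$, which is precisely where the general characterization of semicomplete holomorphic vector fields on $\mathbb{D}$ becomes indispensable.
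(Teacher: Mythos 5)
There is nothing to compare against inside the paper: Theorem~\ref{thm:LoewnerEq} is imported verbatim from \cite{BracciEvolutionI} (it is stated with the citation ``Theorem 1.1'' and no proof is given here), so your attempt can only be measured against the proof in that reference. Judged on that basis, your outline follows essentially the same route as the original: in the forward direction, a.e.\ differentiability of $t\mapsto g_{s,t}(z)$ from the $L^d$ majorant, the algebraic identity $g_{s,t+h}=g_{t,t+h}\circ g_{s,t}$ to reduce to the difference quotients $(g_{t,t+h}-\mathrm{id})/h$, consolidation of the exceptional null sets over a countable dense set of points together with Schwarz--Pick/Montel arguments, and the Berkson--Porta characterization of infinitesimal generators to get semicompleteness; in the converse direction, Carath\'eodory existence and uniqueness, non-explosion from semicompleteness, and the integral representation to recover the evolution-family axioms. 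The essential-uniqueness argument is also the standard one.

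Be aware, however, that what you have written is a plan rather than a proof, and the two steps you yourself flag as delicate are precisely the ones that carry the real work in \cite{BracciEvolutionI} and are only asserted here. First, the definition of an evolution family gives a majorant $k_{z,T}$ for each \emph{fixed} $z$; upgrading this to a locally uniform (in $z$) $L^d$ bound, which you need both for the normal-family argument and for the $L^d$ bound on $V$ over compacta, is a separate lemma and does not follow from Schwarz--Pick by a one-line remark. Second, the existence, off a \emph{single} null set of times, of the locally uniform limit defining $V(t,\cdot)$ requires combining a.e.\ differentiability at countably many base points and base times with Vitali-type arguments; ``Fubini plus Lebesgue differentiation'' is the right slogan but the quantifier bookkeeping ($s$, $z$ and $t$ simultaneously) must be done explicitly. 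Finally, in the converse direction the claim that semicompleteness prevents the solution from reaching $\partial\mathbb{D}$ needs the quantitative form of the representation \eqref{eq:polynomsemicomplete}: one bounds $\frac{d}{dt}|w(t)|^2 = 2\re\bigl[\overline{w(t)}\,V(t,w(t))\bigr]\le 2|V(t,0)|\,(1-|w(t)|^2)$ and applies Gronwall with $t\mapsto|V(t,0)|$ locally integrable; without some such estimate the non-explosion assertion is not justified. With these three points filled in, your argument matches the cited proof.
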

In what follows we will always set $s=0$.

 Essential uniqueness in Theorem~\ref{thm:LoewnerEq} means that for any other Herglotz vector field $\tilde{V}(t,z)$ satisfying \eqref{eq:evolution}, the equality $\tilde{V}(t,z) = V(t,z)$ holds for all $z\in\mathbb{D},$ and almost all $t\in[0,+\infty)$.

Since the Herglotz vector $V(t,z)$ is a semicomplete vector field for almost every $t\in [0,+\infty),$  the following representation formula is valid for almost all $t\in [0,+\infty)$ by \eqref{eq:polynomsemicomplete} from Appendix \ref{app:semigroups}:
\begin{equation}
\label{eq:herglotzrepr}
V(t,z) = V(t,0) - z \,q (t,z) - \overline{V(t,0)} \, z^2.
\end{equation}
where  the function $z\mapsto q(t,z),$ $z\in \mathbb{D},$ is holomorphic and has nonnegative real part.

\subsection{Decreasing general L\"owner theory}
The theory discussed in the previous subsection describes the situation when the function $g_t(z):=g_{0,t}(z)$  maps the canonical domain (i.e, the unit disk) onto the evolution domain $D_t = g_t(\mathbb{D})$ for each moment $t\geq 0$. This framework was convenient for solving extremal problems in the theory of univalent functions in the \nth{20} century, however, when in the \nth{21} century the focus of L\"owner theory shifted towards describing growth processes ($SLE$), it  became unsatisfactory, mainly due to the fact that for $s<t$ it is not in general true that $D_t\subset D_s$.

$SLE$ theory uses the decreasing (time-reversed) approach, where the function $g_t(z)$ maps the evolution domain $D_t$ onto the canonical domain, $g_t(D_t) = \mathbb{D}$ (or the upper half-plane $\mathbb{H}:=\{z: \im z>0\}$, infinite strip $\mathbb{S}$, etc.) The family $\{D_t\}_{t\geq 0}$ is a decreasing family of domains, i.e.\ for $0\leq s < t< \infty$ the inclusion $D_t\subset D_s$ holds.

The decreasing approach was generalized in \cite{gumenyukDuality}. On the surface, the difference between the two versions is in the sign on the right-hand side of the governing equations.

\begin{definition}[{\cite[Definition 1.6]{gumenyukDuality}}]
 Let $d\in [1,+\infty]$. A family $\{h_t\}_{t\geq 0} \subset \mathrm{Hol}(\mathbb{D})$  is called a decreasing L\"owner chain of order $d$ if it satisfies the following conditions:
\begin{enumerate}
 \item each function $h_t:\mathbb{D}\to \mathbb{D}$ is univalent,
 \item $h_0 = \mathrm{id}_{\mathbb{D}},$ and $h_s(\mathbb{D}) \supset h_t(\mathbb{D})$ for $0 \leq s< t < +\infty$,
\item for any compact set $K \subset \mathbb{D}$ and for all $T > 0$ there exists a non-negative function $k_{K,T} \in L^d([0,T], \mathbb{R})$ such that
\[
 |h_s(z) - h_t(z)| \leq \int_s^t k_{K,T}(\xi)d\xi
\]
for all $z\in K$ and for all $0\leq s < t \leq T$.
\end{enumerate}

\end{definition}

\begin{theorem}[{\cite[Theorem 1.11]{gumenyukDuality}}]
\label{thm:gumenyukDuality}
 Let $V$ be a Herglotz vector field of order $d\in [1,+\infty]$. Then,
\begin{enumerate}
 \item For every $z\in \mathbb{D},$ there exists a unique maximal solution $g_t(z) \in \mathbb{D}$ to the following initial value problem
\begin{equation}
\label{eq:gendecLoewner}
\begin{cases}
\frac{\partial g_t(z)}{\partial t} = - V(t,g_t(z)),\\
g_0(z) = z.
\end{cases}
\end{equation}
\item For every $t\geq 0,$ the set $D_t$ of all $z\in \mathbb{D},$ for which $g_t(z)$ is defined at the point $t,$ is a simply connected domain, and the function $g_t(z)$ defined for all $z\in D_t$ maps $D_t$ conformally onto $\mathbb{D}$.
\item The functions $h_t := g_t^{-1}$ form a decreasing Löwner chain of order $d$, which is the unique solution to the following initial value problem for  PDE
\[
\begin{cases}
 \frac{\partial h_t(z)}{\partial t} = \frac{\partial h_t(z)}{\partial z} V(t,z),\\
h_0 = \mathrm{id}_{\mathbb{D}}.
\end{cases}
\]
\end{enumerate}

\end{theorem}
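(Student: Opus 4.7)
The plan is to reduce all three conclusions to Theorem~\ref{thm:LoewnerEq}, applied both to $V$ itself and, crucially, to a time-reversed version of $V$. For (1), fix $z\in\mathbb{D}$ and apply classical Carath\'eodory existence/uniqueness to the initial value problem $\partial_t g_t=-V(t,g_t)$, $g_0=z$: since $z\mapsto V(t,z)$ is holomorphic (hence locally Lipschitz in $z$) and $t\mapsto V(t,z)$ is locally bounded by an $L^d$ function, we obtain a unique maximal solution $t\mapsto g_t(z)$ on $[0,T_z)$, where $T_z\in(0,+\infty]$ is the first exit time from $\mathbb{D}$. Setting $D_t:=\{z\in\mathbb{D}:T_z>t\}$, standard continuous and holomorphic dependence on initial data shows that $D_t$ is open and that $g_t:D_t\to\mathbb{D}$ is holomorphic in $z$.

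The main step is (2). Injectivity is immediate from backward uniqueness: if $g_t(z_1)=g_t(z_2)$, then $s\mapsto g_s(z_1)$ and $s\mapsto g_s(z_2)$ solve the same ODE and agree at $s=t$, so $z_1=z_2$. For surjectivity, fix $T>0$ and $w\in\mathbb{D}$. The time-reversed field $\tilde V(s,z):=V(T-s,z)$, $s\in[0,T]$, is again a Herglotz vector field of order $d$ (semicompleteness, measurability, and the $L^d$ bound are preserved under $s\mapsto T-s$), so Theorem~\ref{thm:LoewnerEq} supplies an evolution family $\{\tilde\varphi_{s,u}\}_{0\leq s\leq u\leq T}$ satisfying $\partial_u\tilde\varphi_{0,u}(w)=V(T-u,\tilde\varphi_{0,u}(w))$ and $\tilde\varphi_{0,0}(w)=w$, taking values in $\mathbb{D}$. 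Put $z:=\tilde\varphi_{0,T}(w)$ and $\gamma(s):=\tilde\varphi_{0,T-s}(w)$; the chain rule gives $\partial_s\gamma(s)=-V(s,\gamma(s))$, $\gamma(0)=z$, with $\gamma(s)\in\mathbb{D}$ for all $s\in[0,T]$. By uniqueness of the ODE in (1), $\gamma(s)=g_s(z)$, so $g_T(z)=\gamma(T)=w$ and $z\in D_T$. Hence $g_t:D_t\to\mathbb{D}$ is a holomorphic bijection, i.e.\ a conformal isomorphism, and $D_t$, being biholomorphic to $\mathbb{D}$, is simply connected.

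For (3), set $h_t:=g_t^{-1}:\mathbb{D}\to D_t$. Differentiating $g_t(h_t(z))=z$ in $t$ and using $g_t'(h_t(z))h_t'(z)=1$ yields the claimed PDE $\partial_t h_t(z)=h_t'(z)\,V(t,z)$. The inclusion $D_t\subset D_s$ for $s\leq t$ is immediate from the definition of $D_t$, hence $h_t(\mathbb{D})\subset h_s(\mathbb{D})$; the local $L^d$ estimate required of a decreasing L\"owner chain of order $d$ is inherited from the corresponding bound on $V$ by integrating the PDE over compact sets $K\subset\mathbb{D}$ and using the Schwarz--Pick-type bound $|h_t'(z)|\leq 1/(1-|z|^2)$ to control $|h_t'|$ uniformly on $K$. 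Uniqueness of $h_t$ among solutions of the PDE follows by inverting and appealing to uniqueness of the ODE in (1). The main conceptual obstacle is the surjectivity step: the sign flip $V\mapsto -V$ destroys the Herglotz property, so Theorem~\ref{thm:LoewnerEq} cannot be applied to $-V$ directly, and the time-reversal trick $\tilde V(s,z)=V(T-s,z)$ is precisely what bridges the forward (increasing) and backward (decreasing) formulations.
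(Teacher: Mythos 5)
The paper does not prove this statement at all: it is imported verbatim from the cited reference (Theorem 1.11 of the decreasing/duality paper), so there is no internal proof to compare against. Your argument is essentially a correct self-contained reconstruction of the duality proof, and its key idea --- obtaining surjectivity of $g_T:D_T\to\mathbb{D}$ by applying Theorem~\ref{thm:LoewnerEq} to the time-reversed field $\tilde V(s,z)=V(T-s,z)$ and running the evolution family backwards --- is exactly the mechanism that links the increasing and decreasing formulations in the literature; the sign flip $V\mapsto -V$ indeed cannot be handled directly, and your reversal trick is the right bridge. A few points deserve tightening if this were to stand as a full proof: $\tilde V$ is only defined on $[0,T]$, while a Herglotz vector field lives on $[0,+\infty)$, so you should extend it (e.g.\ by a fixed semicomplete field) before invoking Theorem~\ref{thm:LoewnerEq}; the ODEs hold only for almost every $t$, so the uniqueness argument should be phrased for Carath\'eodory (locally absolutely continuous) solutions, with the local Lipschitz bound in $z$ coming from Cauchy estimates and the $L^d$ majorant; simple connectivity of $D_t$ requires first noting that $D_t=h_t(\mathbb{D})$ is connected because $g_t$ is open, before concluding from the biholomorphism; and the uniqueness claim in (3) is cleaner if argued directly, by differentiating $t\mapsto \tilde h_t(g_t(z))$ for any competing solution $\tilde h_t$ of the PDE and observing that the two terms cancel, giving $\tilde h_t\circ g_t=\mathrm{id}$ and hence $\tilde h_t=g_t^{-1}=h_t$. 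None of these is a conceptual gap; they are routine regularity and bookkeeping steps.
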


\subsection{Classical Löwner equations as special cases}

One of the motivations for developing general L\"owner theory was to create a unifying framework for studying chordal, radial and dipolar L\"owner equations which, despite having many similar properties, had to be treated as separate objects. Let us show that the radial, chordal and dipolar equations are indeed special cases of (\ref{eq:gendecLoewner}), where $V(t,z)$ is a Herglotz vector field and, in particular, it admits representation (\ref{eq:herglotzrepr}).

In the \emph{radial} case, 
\[
 V(t,z) = -z \, \frac{e^{i u_t}+ z}{e^{i u_t}-z},
\]
and it is easy to see that this vector field satisfies (\ref{eq:herglotzrepr}) with $q(t,z) = \frac{e^{i u_t}+ z}{e^{i u_t}-z}$.

The \emph{chordal} Löwner equation (\ref{eq:chordLoewner}) is formulated in the upper half-plane. This is, however, merely a convention, and to rewrite (\ref{eq:chordLoewner}) in the unit disk we consider the mapping $\phi: \mathbb{H}\to \mathbb{D}$ given by
\[
\phi(z):= - \frac{z- 2 i}{z+2 i}.
\]
The function $\phi(z)$ maps $2\,i \mapsto 0$, $\infty \mapsto 1$ and $0 \mapsto -1$. The family of maps $\tilde{g}_t:= \phi \circ g_t \circ \phi^{-1},$ then satisfies
\begin{equation}
\label{eq:chordalind}
 \begin{cases}
  \frac{\partial}{\partial t}\tilde{g}_t(z) = - \frac12 \frac{ i (\tilde{g}_t(z)  + 1)^3}{\tilde{g}_t(z) (u_t + 2\,i) + u_t - 2\,i},\\
g_0(z) = z,
 \end{cases}\\ z\in \mathbb{D}.
\end{equation}
so that 
\[
 V(t,z) = \frac{i}{2} \,\frac{(z + 1)^3}{z \,(u_t + 2 \,i) + u_t - 2\,i},
\]
which can be rewritten in the as
\[
 V(t,z) = \frac12 \frac{i}{u_t-2 \,i} - z \left[\frac{1}{(1+ u^2_t)^2} \,\frac{\frac{u_t - 2\,i}{u_t + 2\,i} - z}{\frac{u_t-2\,i}{u_t +2\,i} + z} - i \, \frac{u_t (3+u^2_t)}{(1+u^2_t)^2}\right] + \frac12 \frac{i}{u_t+ 2\,i} \,z^2,
\]
so that we can see that (\ref{eq:chordalind}) is indeed a special case of (\ref{eq:gendecLoewner}).

The function 
\[
 \phi(z):= i\, \frac{e^z-i}{e^z+i}
\]
maps $\mathbb{S}$ onto $\mathbb{D}$, sending $\pm\infty$ and 0 to $\pm i$ and $1$, respectively. We rewrite the \emph{dipolar} L\"owner equation (\ref{eq:dipolar}) in the unit disk by considering $\tilde{g}_t(z):= \phi \circ g_t \circ \phi^{-1}$:

\begin{equation}
 \begin{cases}
  \frac{\partial}{\partial t} \tilde{g}_t(z) =  \frac12 \, (1+\tilde{g}_t(z))^2 \,\frac{1 - i \tilde{g}_t(z) + e^{u_t} (\tilde{g}_t(z) -i)}{e^{u_t} (1+ i \tilde{g}_t(z)) - \tilde{g}_t(z) - i}, \\
  \tilde{g}_0(z) = z,
 \end{cases} z\in \mathbb{D}.
\end{equation}
Similarly to the previous case, we rewrite the vector field
\[
 V(t,z) = - \frac12 \, (1+z^2) \,\frac{1 - i \,z + e^{u_t} (z-i)}{e^{u_t}\, (1 + i \,z) - z - i},
\]
as
\begin{dmath*}
 V(t,z) = \frac{i}{2} + \frac{1}{1 - e^{u_t}} 
-z \left(\sech^2 u_t \,\frac{\frac{1 - e^{u_t}}{1-i\,e^{u_t}}-z}{\frac{1 - e^{u_t}}{1-i\,e^{u_t}}+z} - i \frac{\sinh u_t}{\cosh^2 u_t}\right) 
+\left(\frac{i}{2} - \frac{1}{1 - e^{u_t}}\right) z^2.
\end{dmath*}

In all these cases, the Herglotz vector fields $V(t,z)$ depend continuously on $t,$ and hence, the fields $V(t,z),$ as well as the corresponding Löwner chains, are of order $\infty$.

\section{Slit holomorphic vector fields}
\label{sec:Witt}
\subsection{Vector fields \texorpdfstring{$\ell_n$}{l_n}}

Consider the following Laurent polynomial vector fields in the upper half-plane
\[
 \ell^{\mathbb{H}}_{n}(z):= - z^{n+1}, \quad n \in \mathbb{Z}.
\]

Now, let $D\subset \hat{\mathbb{C}}$ be an arbitrary simply connected hyperbolic domain. Then there exists a conformal isomorphism $\phi: \mathbb{H} \to D$. We push forward the vector fields from $\mathbb{H}$ to $D$ by $\phi, $ and denote
\[
 \ell_n := \phi_* \ell^{\mathbb{H}}_n.
\]
Note that the fields $\ell_n$ are not determined uniquely by $D$ and depend, in fact, on the choice of the map $\phi$.

If we choose the unit disk $\mathbb{D}$ as the domain $D,$ then we agree to use the conformal isomorphism $\phi: \mathbb{H} \to \mathbb{D}$ given by 
\[
 \phi(z) =- \frac{z- 2 i}{z+2 i},
\]
and to use the notation $\ell^{\mathbb{D}}_{n}$ for $\phi_* \ell^{\mathbb{H}}$. Using the formula (\ref{eq:vfpf}) from Appendix~\ref{app:pushforward}, we can write the explicit expressions for $\ell^{\mathbb{D}}_n:$
\begin{equation}
-2^{n-1} \,(-i)^n  (z-1)^{n+1} (z+1)^{-n+1}.
\end{equation}

In the case of the infinite strip $\mathbb{S}:= \{z:0<\im z< \pi\}$  we choose the map $\psi: \mathbb{H} \to \mathbb{S}$ given by
\[
 \psi(z) = \Log \frac{2+z}{2-z},
\]
and denote $\ell^{\mathbb{S}}_n:= \psi_* \ell^{\mathbb{H}}_n,$ so that
\begin{equation}
\label{eq:stripell}
 \ell^{\mathbb{S}}_n(z) = -2^n\,\sinh (z) \tanh ^n\left(\frac{z}{2}\right).
\end{equation}

As we will see, the fields $\ell_{-1}$, $\ell_0$, $\ell_1$ are a convenient tool in the theory of holomorphic flows in a domain $D$, and the field $\ell_{-2}$ is directly relevant to $SLE$ theory.

\subsection{Representation of complete and semicomplete vector fields by \texorpdfstring{$\ell_n$}{l_n}}

It is known that complete holomorphic vector fields in a simply connected hyperbolic $D$ form a vector space of real dimension 3. The following proposition states that this vector space can be realized as $\mathrm{span}_{\mathbb{R}} \{\ell_{-1},\ell_0,\ell_1\}$.
 
\begin{proposition}
$V$ is a complete holomorphic vector field in $D$ if and only if it admits a decomposition
\begin{equation}
\label{genRepr}
 V = \sigma_{-1} \,\ell_{-1} + \sigma_0 \,\ell_{0} + \sigma_1 \,\ell_{1},
\end{equation}
where $\sigma_{-1},\sigma_0,\sigma_1 \in \mathbb{R}$.
\end{proposition}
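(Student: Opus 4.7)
The plan is to reduce to the case $D=\mathbb{H}$ by the conformal covariance of both sides of the statement, and then identify the space of complete holomorphic vector fields on $\mathbb{H}$ with the Lie algebra of $\mathrm{Aut}(\mathbb{H}) = \mathrm{PSL}(2,\mathbb{R})$.

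First I would record that if $\phi:\mathbb{H}\to D$ is the conformal isomorphism fixed in the definition of $\ell_n$, then $\phi_*$ is an $\mathbb{R}$-linear bijection between holomorphic vector fields on $\mathbb{H}$ and on $D$ which preserves completeness (the flow of $V$ on $\mathbb{H}$ is conjugated by $\phi$ to the flow of $\phi_* V$ on $D$, so one is globally defined for $t\in\mathbb{R}$ iff the other is). Since $\ell_n = \phi_*\ell_n^{\mathbb{H}}$, the decomposition \eqref{genRepr} holds in $D$ iff the analogous one holds in $\mathbb{H}$ for $\phi^{-1}_* V$. Thus it suffices to prove the proposition for $D=\mathbb{H}$.

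In $\mathbb{H}$ I would argue as follows. For the reverse implication, direct integration of $\dot{z} = -z^{n+1}$ for $n=-1,0,1$ shows that $\ell_{-1}^{\mathbb{H}}$, $\ell_0^{\mathbb{H}}$, $\ell_1^{\mathbb{H}}$ generate the horizontal translations $z\mapsto z-t$, the dilations $z\mapsto e^{-t}z$, and the parabolic family $z\mapsto z/(1+tz)$; each of these is a one-parameter subgroup of $\mathrm{Aut}(\mathbb{H})$, so the three fields are complete, and any real linear combination is complete as well since the space of complete fields on a complex manifold is a real Lie algebra (here the Lie algebra of the finite-dimensional Lie group $\mathrm{Aut}(\mathbb{H})$). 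For the forward implication, given $V$ complete on $\mathbb{H}$, the associated flow $\{\varphi_t\}_{t\in\mathbb{R}}$ is a one-parameter subgroup of $\mathrm{Aut}(\mathbb{H}) = \mathrm{PSL}(2,\mathbb{R})$, and differentiating $\varphi_t(z) = \tfrac{a(t)z+b(t)}{c(t)z+d(t)}$ at $t=0$ gives $V(z) = \alpha + \beta z + \gamma z^2$ with $\alpha,\beta,\gamma\in\mathbb{R}$, i.e.\ $V = -\alpha\,\ell_{-1}^{\mathbb{H}} - \beta\,\ell_0^{\mathbb{H}} - \gamma\,\ell_1^{\mathbb{H}}$.

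The main obstacle is the forward direction, specifically the step identifying complete fields on $\mathbb{H}$ with elements of $\mathfrak{sl}(2,\mathbb{R})$. A cleaner alternative that avoids any smoothness issues of the flow is to push the problem to $\mathbb{D}$ and use the semicomplete representation \eqref{eq:herglotzrepr} (autonomous form, as in the cited \eqref{eq:polynomsemicomplete} of Appendix \ref{app:semigroups}): if $V$ is complete then both $V$ and $-V$ are semicomplete on $\mathbb{D}$, so the associated Herglotz functions $q$ and $-q$ both have $\re \geq 0$ on $\mathbb{D}$; hence $\re q \equiv 0$, and being holomorphic $q$ reduces to a purely imaginary constant $ic$ with $c\in\mathbb{R}$. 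Writing $V(0)=a+ib$ this yields $V(z) = (a+ib) - icz - (a-ib)z^2$, a real-three-parameter family. A direct coefficient comparison with the explicit formulas for $\ell_{-1}^{\mathbb{D}},\ell_0^{\mathbb{D}},\ell_1^{\mathbb{D}}$ displayed in the text then exhibits this family as $\mathrm{span}_{\mathbb{R}}\{\ell_{-1}^{\mathbb{D}},\ell_0^{\mathbb{D}},\ell_1^{\mathbb{D}}\}$, and pushing forward by $\phi^{-1}$ completes the proof for general $D$.
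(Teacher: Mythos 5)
Your argument is correct, and its ``cleaner alternative'' is in substance the paper's own proof: the paper also reduces to $\mathbb{D}$, invokes the characterization recorded after \eqref{eq:polynomsemicomplete} (complete $\Leftrightarrow$ $q(z)\equiv ib$, $b\in\mathbb{R}$), obtains the quadratic form \eqref{shoikhetRepr}, and finishes by matching coefficients with the explicit $\ell^{\mathbb{D}}_{-1},\ell^{\mathbb{D}}_0,\ell^{\mathbb{D}}_1$. Your re-derivation of the ``only if'' half of that characterization (apply the semicomplete representation to both $V$ and $-V$, force $\re q\equiv 0$, hence $q\equiv ic$) is a nice self-contained touch; just note that the ``if'' direction of the proposition also needs the converse half --- that every field $(a+ib)-icz+(-a+ib)z^2$ is complete --- which you get either from the appendix statement itself or from your first route. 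That first route, through $\mathbb{H}$ and $\mathrm{Aut}(\mathbb{H})\cong\mathrm{PSL}(2,\mathbb{R})$, is a genuinely different treatment of the reverse implication, but two cautions: the blanket claim that complete holomorphic fields on a complex manifold form a real Lie algebra is false in general (sums of complete holomorphic fields need not be complete); what rescues it here is hyperbolicity, i.e.\ Cartan's theorem that $\mathrm{Aut}(\mathbb{H})$ is a finite-dimensional Lie group whose one-parameter subgroups have exactly the complete fields as generators --- precisely the three-dimensionality fact the paper quotes before the proposition, so cite that rather than the general claim. Likewise, differentiating a one-parameter family of M\"obius maps presupposes smoothness in $t$, which you rightly flag and avoid. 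Finally, ``direct coefficient comparison exhibits this family as the span'' should include the observation that the linear map $(\sigma_{-1},\sigma_0,\sigma_1)\mapsto(a,b,c)$ is invertible; the paper does this by writing the inverse out explicitly, and without it only the inclusion $\mathrm{span}\subset\{\text{complete fields}\}$ is established.
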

\begin{proof}
Without loss of generality, we assume $D=\mathbb{D}$. By representation (\ref{eq:polynomsemicomplete}) from Appendix~\ref{app:semigroups}, a vector field is complete in $\mathbb{D}$ if and only if it admits the representation
\begin{equation}
\label{shoikhetRepr}
 V(z) = (a+i b) - i c z  + (-a + i b) z^2, \quad a,b,c\in \mathbb{R}.
\end{equation}
By expanding (\ref{genRepr}) we get
\begin{dmath*}
 V(z) = \left(\frac{\sigma_0}{2} + i \left(\sigma_1-\frac{\sigma_{-1}}{4}\right)\right) - i\, \left(2\,\sigma_{1} + \frac{\sigma_{-1}}{2}\right)\, z + \left(-\frac{\sigma_{0}}{2} + i \left(\sigma_1 - \frac{\sigma_{-1}}{4}\right)\right)z^2.
\end{dmath*}
Since this expression is of the form (\ref{shoikhetRepr}), the vector field is complete.

Conversely, every complete vector field $V(z) = (a+i b) - i c z  + (-a + i b) \, z^2$ may be decomposed as
\[
 V(z)= \left(- 2 b + c\right) \,\ell^{\mathbb{D}}_{-1}(z) + 2 a \,\ell^{\mathbb{D}}_0(z) + \frac14 \left( 2 b + c\right) \,\ell^{\mathbb{D}}_1(z).
\]
\end{proof}

To illustrate the geometric meaning of the basis vector fields $\ell^{\mathbb{D}}_{-1},$ $\ell^{\mathbb{D}}_0$ and $\ell^{\mathbb{D}}_1,$ we show their flow lines in Figure~\ref{fig:completebasis}.

\begin{figure}[hb]
 \centering
\subfloat[$\ell_{-1}$]{
\includegraphics[width=5cm,keepaspectratio=true]{./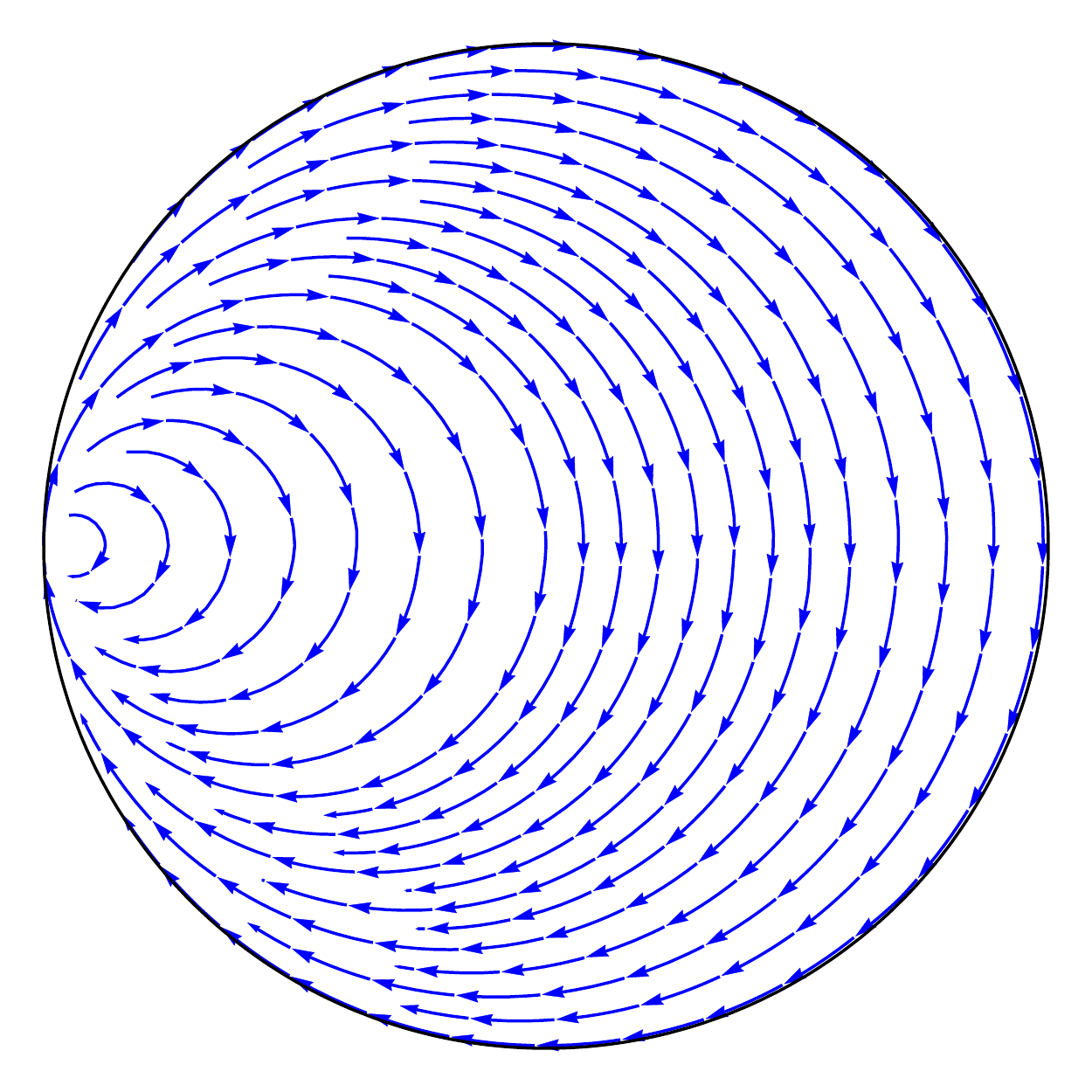}
}
\subfloat[$\ell_0$]{
 \includegraphics[width=5cm,keepaspectratio=true]{./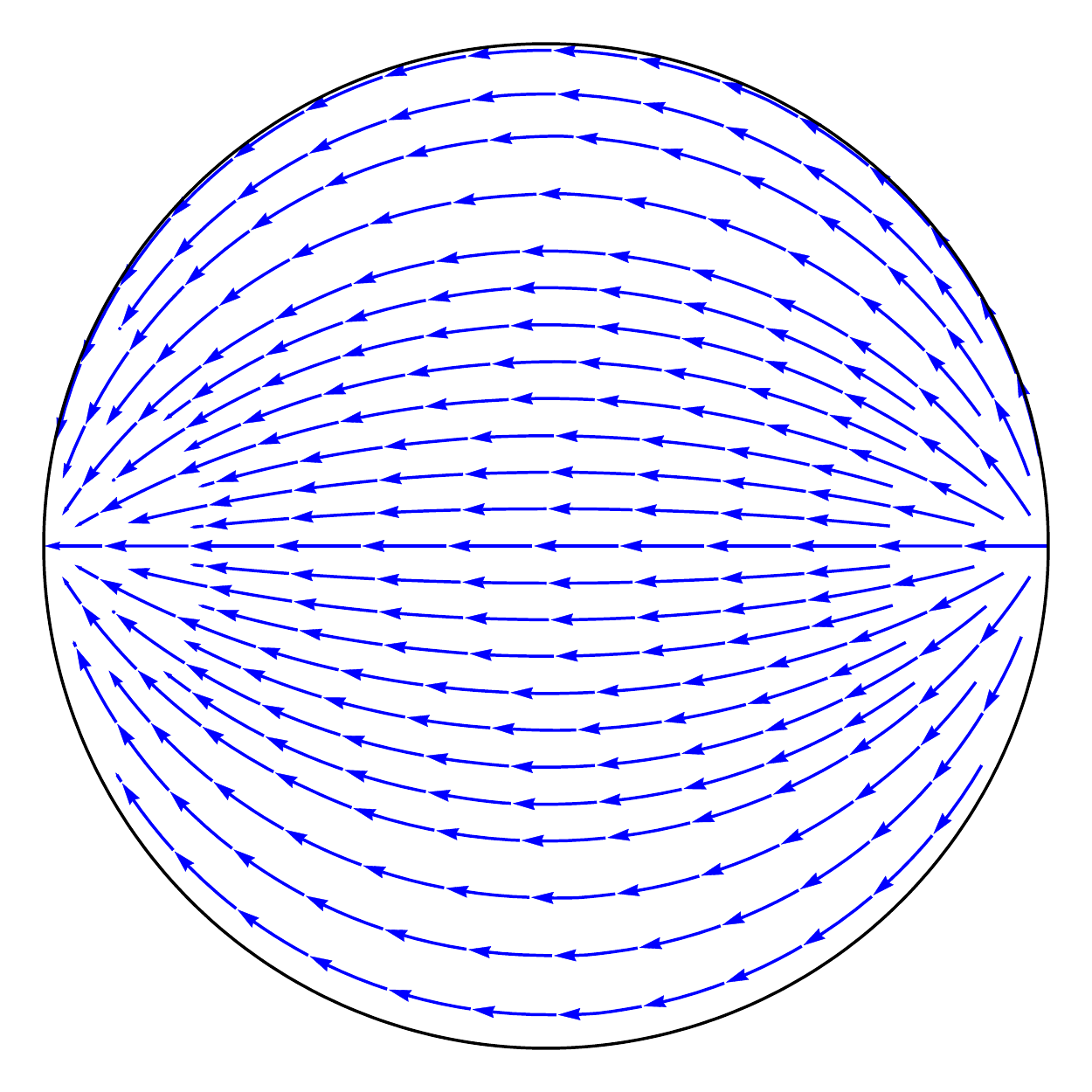}
}
{
\subfloat[$\ell_1$]{
 \includegraphics[width=5cm,keepaspectratio=true]{./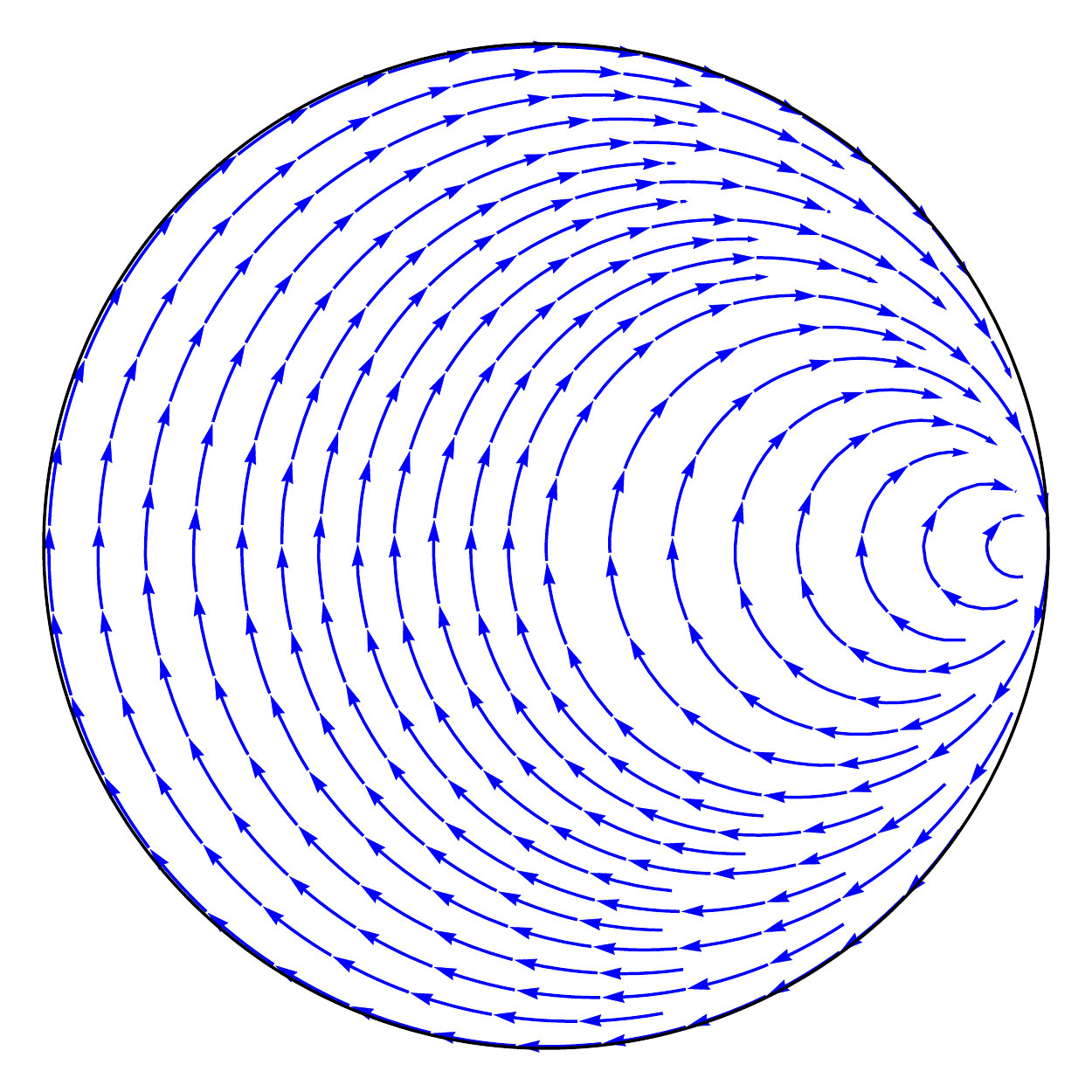}
}
}
 \caption{Basis for the space of complete holomorphic vector fields in $\mathbb{D}$.}
 \label{fig:completebasis}
\end{figure}

\begin{proposition}
 Vector fields $\ell_{-2}$ and $-\ell_{2}$ are semicomplete in $D$.
\end{proposition}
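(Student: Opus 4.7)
The plan is to invoke the Berkson--Porta characterization, namely that $V \in \mathrm{Hol}(\mathbb{D})$ is semicomplete if and only if it can be written as $V(z) = V(0) - z\,p(z) - \overline{V(0)}\,z^2$ with $p \in \mathrm{Hol}(\mathbb{D})$ satisfying $\re p \ge 0$; this is precisely the time-independent version of formula \eqref{eq:herglotzrepr}. Since push-forward by a conformal isomorphism preserves semicompleteness (trajectories are mapped to trajectories), it is enough to verify the property in the unit disk, i.e., for $\ell^{\mathbb{D}}_{-2}$ and $-\ell^{\mathbb{D}}_{2}$, and the general statement then follows by push-forward by any conformal isomorphism $\mathbb{D}\to D$.

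Using the explicit formula derived in the preceding subsection, the two fields are
\[
\ell^{\mathbb{D}}_{-2}(z) = \frac{(z+1)^3}{8\,(z-1)}, \qquad -\ell^{\mathbb{D}}_{2}(z) = -\frac{2\,(z-1)^3}{z+1}.
\]
The elementary identities $(z+1)^3 = (z-1)(z^2 + 4z + 7) + 8$ and $(z-1)^3 = (z+1)(z^2 - 4z + 7) - 8$ allow me to decompose each field as a quadratic plus a simple boundary pole, and after regrouping I would put them in Berkson--Porta form as
\[
\ell^{\mathbb{D}}_{-2}(z) = -\tfrac{1}{8} \;-\; z\Bigl(-\tfrac{1}{2} + \tfrac{1}{1-z}\Bigr) \;+\; \tfrac{1}{8}\,z^2,
\]
\[
-\ell^{\mathbb{D}}_{2}(z) = 2 \;-\; z\Bigl(-8 + \tfrac{16}{1+z}\Bigr) \;-\; 2\,z^2.
\]
In each case the constant and $z^2$-coefficients are real, and they match $V(0)$ and $-\overline{V(0)}$ as required.

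What then remains is to verify that the two candidates $p_{-2}(z) := -\tfrac{1}{2} + \tfrac{1}{1-z}$ and $p_{2}(z) := -8 + \tfrac{16}{1+z}$ have nonnegative real part on $\mathbb{D}$. Both checks reduce to the classical fact that the Möbius transformations $z \mapsto 1/(1 \pm z)$ send $\mathbb{D}$ biholomorphically onto the half-plane $\{\re w > 1/2\}$; consequently $\re p_{-2}(z) > -\tfrac12 + \tfrac12 = 0$ and $\re p_{2}(z) > -8 + 8 = 0$ throughout $\mathbb{D}$. By Berkson--Porta, both $\ell^{\mathbb{D}}_{-2}$ and $-\ell^{\mathbb{D}}_{2}$ are semicomplete in $\mathbb{D}$, which proves the proposition.

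The main potential obstacle is purely computational: a single sign slip in the polynomial division would produce a candidate $p$ that fails the positive-real-part test on some boundary arc, giving a spurious conclusion. Conceptually the argument is straightforward once the Berkson--Porta representation is on hand; no global dynamical analysis of the ODE is required.
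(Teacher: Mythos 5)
Your proof is correct and follows essentially the same route as the paper: reduce to the unit disk, invoke the Berkson--Porta representation \eqref{eq:polynomsemicomplete}, and exhibit the function of nonnegative real part explicitly (your $p_{-2}$ and $p_{2}$ are exactly the paper's $q(z)=\tfrac12\,\tfrac{1+z}{1-z}$ and $q(z)=8\,\tfrac{1-z}{1+z}$, just written via polynomial division instead of the formula $q(z)=\bigl(V(0)-V(z)\bigr)/z-\overline{V(0)}\,z$). The computations and the half-plane argument for $\re p\ge 0$ check out.
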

\begin{proof}
Again, we use the fact that a holomorphic vector field $V(z)$ in the unit disk is semicomplete if and only if it can be written as
\[
 V(z) = V(0)  -z \,q(z) - \overline{V(0)}\, z^2,
\]
where $q(z)$  is holomorphic and $\re q(z) \geq 0$. For $\ell^{\mathbb{D}}_{-2} = \frac18 \frac{(z+1)^3}{z-1},$ 
\[
 q(z) = \frac{\ell^{\mathbb{D}}_{-2}(0) - \ell^{\mathbb{D}}_{-2}(z)}{z} - \overline{\ell^{\mathbb{D}}_{-2}(0)} \, z = \frac{1}{2} \, \frac{1+z}{1 -z },
\]
has positive real part in $\mathbb{D}$. The same is true for $- \ell_{2}$:
\[
q(z) = \frac{-\ell^{\mathbb{D}}_{2}(0) +\ell^{\mathbb{D}}_{2}(z)}{z} + \overline{\ell^{\mathbb{D}}_{2}(0)} \, z = 8 \, \frac{1-z}{1 + z }.
\]
\end{proof}

Note that none of the vector fields $\pm \ell_{n},$ for $|n|\geq 3,$ is semicomplete. 

\subsection{Slit holomorphic vector fields}
In this subsection we consider vector fields in $\mathbb{D}$ that are tangent to the boundary (meaning that $\lim_{r\to 1} \re V(r e^{i \theta}) r e^{-i \theta} = 0$) at all boundary points, except perhaps for the point at $1$.

\begin{proposition}
\label{prop:slitFields}
The following statements are equivalent.
\begin{enumerate}[label=(\roman*)]
 \item \label{item:propstat1}  $V(z)$ is a semicomplete vector field in $\mathbb{D}$ satisfying 
\[
 \lim_{r \to 1}\re V(re^{i\theta})\, r e^{-i \theta} = 0
\]
for all $e^{i\theta} \in \partial \mathbb{D},$ except perhaps for $e^{i \theta} = 1$. 
\item \label{item:propstat2}$V(z)$ can be written as 
\begin{equation}
\label{eq:slitHerglotz}
 V(z) = \alpha  - z \left( i \beta  + \gamma \, \frac{1+z}{1-z}\right) -\overline{\alpha} z^2,
\end{equation}
for some $\alpha\in \mathbb{C},$ $\beta\in \mathbb{R}$ and $\gamma\geq 0$.
\item \label{item:propstat3} $V(z)$ can be written as
\begin{equation}
\label{eq:ellrepresentation}
 V(z) = b_{-2} \ell^{\mathbb{D}}_{-2}(z) + b_{-1} \ell^{\mathbb{D}}_{-1}(z) + b_0  \ell^{\mathbb{D}}_0 (z)+ b_1 \ell^{\mathbb{D}}_1(z),
\end{equation}
for some $b_{-2} \geq 0,$ $b_{-1}, b_0, b_1 \in \mathbb{R}$.
\end{enumerate}
\end{proposition}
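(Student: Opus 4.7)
I prove (i)$\Leftrightarrow$(ii) and (ii)$\Leftrightarrow$(iii); both rely on two earlier facts: the semicomplete representation (\ref{eq:polynomsemicomplete}) from Appendix~\ref{app:semigroups}, and the complete-field representation (\ref{shoikhetRepr}) recalled in the first proposition of this subsection.

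For (i)$\Leftrightarrow$(ii), I write a semicomplete $V$ in the canonical form $V(z)=V(0)-z\,q(z)-\overline{V(0)}\,z^{2}$ with $q$ holomorphic on $\mathbb{D}$ and $\re q\ge 0$. A direct computation gives
\[
\re\!\bigl(V(re^{i\theta})\,re^{-i\theta}\bigr)=r(1-r^{2})\,\re\!\bigl(V(0)\,e^{-i\theta}\bigr)-r^{2}\,\re q(re^{i\theta}),
\]
so the tangency condition in (i) is equivalent to $\lim_{r\to 1}\re q(re^{i\theta})=0$ for every $e^{i\theta}\ne 1$. By the Herglotz representation of positive harmonic functions, $\re q$ is the Poisson integral of a non-negative Borel measure $\mu$ on $\partial\mathbb{D}$; the pointwise vanishing of its radial limit away from $\{1\}$ forces $\mu=\gamma\,\delta_{\{1\}}$ with $\gamma\ge 0$, so $q(z)=\gamma\,\tfrac{1+z}{1-z}+i\beta$ for some $\beta\in\mathbb{R}$. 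Setting $\alpha:=V(0)$ yields (\ref{eq:slitHerglotz}). Conversely, for $V$ in the form (\ref{eq:slitHerglotz}) one reads off $q(z)=\gamma\tfrac{1+z}{1-z}+i\beta$, which is holomorphic with non-negative real part, so $V$ is semicomplete, and $\re q(e^{i\theta})=0$ for $\theta\ne 0$ supplies the tangency condition.

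For (ii)$\Leftrightarrow$(iii), the preceding proposition computed $q(z)=\tfrac{1}{2}\tfrac{1+z}{1-z}$ for $\ell^{\mathbb{D}}_{-2}$, which combined with $\ell^{\mathbb{D}}_{-2}(0)=-1/8$ places $\ell^{\mathbb{D}}_{-2}$ in the form (\ref{eq:slitHerglotz}) with $\alpha=-1/8$, $\beta=0$, $\gamma=1/2$; the fields $\ell^{\mathbb{D}}_{-1},\ell^{\mathbb{D}}_{0},\ell^{\mathbb{D}}_{1}$ are complete and so have $\gamma=0$ in their (\ref{eq:slitHerglotz})-representations (their associated $q$ being a purely imaginary constant). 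The implication (iii)$\Rightarrow$(ii) follows by linearity with $\gamma=b_{-2}/2\ge 0$. For (ii)$\Rightarrow$(iii), given $V$ of the form (\ref{eq:slitHerglotz}) I set $b_{-2}:=2\gamma\ge 0$; a direct substitution shows
\[
V(z)-b_{-2}\,\ell^{\mathbb{D}}_{-2}(z)=\bigl(\alpha+\tfrac{\gamma}{4}\bigr)-i\beta\,z-\overline{\bigl(\alpha+\tfrac{\gamma}{4}\bigr)}\,z^{2},
\]
which matches the form (\ref{shoikhetRepr}) and is therefore complete. The first proposition of this subsection then supplies real $b_{-1},b_{0},b_{1}$ with $V-b_{-2}\,\ell^{\mathbb{D}}_{-2}=b_{-1}\ell^{\mathbb{D}}_{-1}+b_{0}\ell^{\mathbb{D}}_{0}+b_{1}\ell^{\mathbb{D}}_{1}$, giving (\ref{eq:ellrepresentation}).

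The only non-routine step is the Herglotz argument in (i)$\Rightarrow$(ii): deducing from the pointwise radial-limit condition $\lim_{r\to 1}\re q(re^{i\theta})=0$ at every $e^{i\theta}\ne 1$ that the representing Borel measure is concentrated at $\{1\}$. Everything after that is bookkeeping, and the structural idea behind (ii)$\Rightarrow$(iii) is simply that a non-negative multiple of $\ell^{\mathbb{D}}_{-2}$ is precisely the right object to subtract in order to annihilate the singular (at $z=1$) contribution to $q$, reducing matters to the complete case already handled.
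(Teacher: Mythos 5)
Your proof is correct and takes essentially the same route as the paper: writing a semicomplete $V$ in the canonical form, reducing the tangency condition to $\lim_{r\to 1}\re q(re^{i\theta})=0$ off $\{1\}$, and concluding via the Herglotz representation that the measure is $\gamma\,\delta_{\{1\}}$ is exactly the paper's (i)$\Leftrightarrow$(ii) argument (the paper handles the measure-concentration step you flag as non-routine by citing the proof of Lemma 3.7 in the Bracci et al.\ contact-points paper, so asserting it is at the same level of rigor). The only difference is cosmetic: for (ii)$\Leftrightarrow$(iii) the paper expands $V$ and reads off the coefficients $b_{-2}=2\gamma$, $b_{-1}=\beta-2\im\alpha$, $b_0=\tfrac{\gamma}{2}+2\re\alpha$, $b_1=\tfrac14(\beta+2\im\alpha)$ directly, whereas you subtract $2\gamma\,\ell^{\mathbb{D}}_{-2}$ and invoke the earlier completeness proposition; both are routine bookkeeping.
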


\begin{proof} As a semicomplete field, $V(z)$ can be represented in the form
\[
 V(z) = \alpha - z\, (i\beta + p(z)) - \bar{\alpha} z^2,
\]
for some constants $\alpha\in \mathbb{C},$ $\beta \in \mathbb{R}$ and for a function $p(z),$ which is analytic in $\mathbb{D}$ and such that $\re p(z) \geq 0$ in $\mathbb{D}$ and $p(0) \geq 0$.

By the assumptions of the proposition we have that 
\begin{dmath*} 0 =  \lim_{r \to 1}\re V(re^{i\theta})\, r e^{-i \theta} 
=  \lim_{r\to 1} \re \left[ \alpha r e^{i\theta} - r^2 (i\beta + p(r {e^{ i\theta}})) - \bar{\alpha} r^3 e^{i\theta} \right] = \lim_{r\to 1} \re p(r e^{i\theta})
\end{dmath*}
for all $e^{i\theta} \in \partial \mathbb{D}$ except, perhaps, $e^{i\theta} =1$.

Then, $e^{-p(z)}$ is a singular function (see \cite{Hoffman}), and $p(z)$ can be written as
\[
 p(z) = \int \frac{e^{i\theta} + z}{e^{i\theta}-z} d\mu (\theta),
\]
for some singular measure $\mu$. Since the non-tangential limit $\lim_{r\to 1} \re p(re^{i\theta})$ vanishes for all $\theta\in (-\pi,\pi],$ except $\theta=0,$ the measure $\mu$ is given by $\mu(\theta) = \gamma \,\delta_0(\theta)$ for some $\gamma \geq 0$ (see \cite[proof of Lemma 3.7]{bracci2013contact} for a detailed discussion). Therefore,
\[
 p(z) = \gamma\, \frac{1+z}{1-z}, \quad \gamma\geq 0.
\]

Checking that \ref{item:propstat2} implies \ref{item:propstat1} is trivial.

To see that representations (\ref{eq:slitHerglotz}) and (\ref{eq:ellrepresentation}) are equivalent, note that

\begin{dmath*}
 V(z) = \alpha  - z \left( i \beta  + \gamma \, \frac{1+z}{1-z}\right) -\overline{\alpha} z^2 
= 2 \, \gamma \,\ell^{\mathbb{D}}_{-2}(z) + \left( \beta -2 \im \alpha \right)\ell^{\mathbb{D}}_{-1}(z) + \left(\frac{\gamma}{2} +2 \re \alpha \right)\ell^{\mathbb{D}}_0(z) + \frac14 \left( \beta +2 \im \alpha \right)\ell^{\mathbb{D}}_{1}(z),
\end{dmath*}
and the coefficients at $\ell^{\mathbb{D}}_{-2}$, $\ell^{\mathbb{D}}_{-1},$ $\ell^{\mathbb{D}}_0$, $\ell^{\mathbb{D}}_1$ are linearly independent.

\end{proof}

Representation (\ref{eq:ellrepresentation}) has an important advantage over (\ref{eq:slitHerglotz}), namely, the invariance of the coefficients $b_{j}$ under conformal maps. Given a map $\phi:\mathbb{D}\to D,$ the pushforward $\phi_* V(z)$ has the same form as $V(z)$:
\[
 \phi_*\left(b_{-2} \ell^{\mathbb{D}}_{-2} + b_{-1} \ell^{\mathbb{D}}_{-1} + b_0  \ell^{\mathbb{D}}_0 + b_1 \ell^{\mathbb{D}}_1\right) = b_{-2} \ell_{-2} + b_{-1} \ell_{-1} + b_0  \ell_0 + b_1 \ell_1.
\]

Since the flows generated by the vector fields described in Proposition~\ref{prop:slitFields} have slit geometry (i.e., the flow elements map the disk onto $\mathbb{D}\setminus \gamma,$ where $\gamma$ is a Jordan curve), in the rest of the paper we will call them \emph{slit vector fields}.

The set of  vector fields that are tangent to the unit circle everywhere except for one point, is of course, invariant under pushforwards with respect to the Möbius automorphisms of the unit disk, and the coefficients in representation (\ref{eq:slitHerglotz}) transform as described in the following proposition.
\begin{proposition}
\label{prop:slittransformation}
 Let 
\[
 V(z) = \alpha - z \,\left(i\beta + \gamma \, \frac{z_0 + z}{z_0 - z}\right) - \bar{\alpha} z^2,\quad z\in \mathbb{D},
\]
where
$ \alpha \in \mathbb{C}$, $\beta \in \mathbb{R}$, $\gamma \geq 0$, $|z_0| \in \partial\mathbb{D}$, and let
\[
 m(z) = e^{i\theta} \frac{z- a}{1 - \bar{a} z}, \quad a\in \mathbb{D}.
\]
Then

\begin{equation}
\label{eq:oftheform}
 (m_* V )(z) = \tilde{\alpha} - z \,\left(i\tilde{\beta} + \tilde{\gamma} \, \frac{m(z_0) + z}{m(z_0) - z}\right) - \bar{\tilde{\alpha}} z^2,
\end{equation}
with 
\begin{align}
 \tilde{\alpha} &= \frac{e^{i \theta}}{1 - |a|^2} \, V(a),\\
 \tilde{\beta} &=  \frac{4 \im (a \bar{\alpha}) + \beta (1+ |a|^2) + 4 \gamma \im (a \bar{z}_0) \left(-\frac{1+|a|^2}{|a-z_0|^4} \re(a \bar{z}_0) + \frac{1+|a|^4}{|a-z_0|^4}\right)}{1-|a|^2},\\
 \tilde{\gamma} &= \gamma \,|m'(z_0)|^2,
\end{align}
\end{proposition}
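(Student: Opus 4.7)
The plan is to first invoke Proposition~\ref{prop:slitFields} to show that $m_*V$ already has a representation of the form \eqref{eq:oftheform}, and then to read off the three parameters $\tilde\alpha,\tilde\beta,\tilde\gamma$ one at a time by inspecting the local data of $m_*V$ at the points $0$ and $m(z_0)$. Throughout I will use the change-of-variable formula $(m_*V)(w)=m'(m^{-1}(w))\,V(m^{-1}(w))$.

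First I would observe that, being a conformal automorphism of $\mathbb{D}$, the map $m$ preserves the class of semicomplete holomorphic vector fields, sends $\partial\mathbb{D}$ bijectively to itself, and preserves tangency to $\partial\mathbb{D}$ at every boundary point other than the image of $z_0$. Proposition~\ref{prop:slitFields} therefore gives $m_*V$ a representation of the form \eqref{eq:oftheform} with some $\tilde\alpha\in\mathbb{C}$, $\tilde\beta\in\mathbb{R}$, $\tilde\gamma\geq 0$ and with exceptional point $m(z_0)$.

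Next I would extract $\tilde\alpha$ and $\tilde\gamma$. Evaluating \eqref{eq:oftheform} at $z=0$ gives $\tilde\alpha=(m_*V)(0)$; since $m^{-1}(0)=a$ and $m'(a)=e^{i\theta}/(1-|a|^2)$, this immediately yields the stated formula for $\tilde\alpha$. For $\tilde\gamma$ I would compare residues at the singular boundary point: the representation \eqref{eq:oftheform} gives $\operatorname{Res}_{m(z_0)}(m_*V)=2\tilde\gamma\,m(z_0)^2$, while the chain rule gives $\operatorname{Res}_{m(z_0)}(m_*V)=m'(z_0)^2\operatorname{Res}_{z_0}V=2\gamma\,(m'(z_0)\,z_0)^2$. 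The identity $(1-\bar a z_0)(z_0-a)=z_0|z_0-a|^2$, valid because $|z_0|=1$, gives $z_0 m'(z_0)/m(z_0)=(1-|a|^2)/|z_0-a|^2=|m'(z_0)|$, and therefore $\tilde\gamma=\gamma|m'(z_0)|^2$.

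Finally I would compute $\tilde\beta$ by matching the $z$-coefficients of \eqref{eq:oftheform} and of $m_*V$ at $z=0$. Expanding \eqref{eq:oftheform} shows $(m_*V)'(0)=-(i\tilde\beta+\tilde\gamma)$, while differentiating the composite gives
\[
(m_*V)'(0) = \frac{m''(a)}{m'(a)}\,V(a) + V'(a) = \frac{2\bar a}{1-|a|^2}\,V(a) + V'(a),
\]
so that $\tilde\beta = -\operatorname{Im}\bigl[V'(a)+\tfrac{2\bar a}{1-|a|^2}V(a)\bigr]$. The remaining work, which I expect to be the main obstacle, is purely algebraic: one substitutes the explicit formulas for $V(a)$ and $V'(a)$, takes imaginary parts, and collects terms over the common denominator $1-|a|^2$. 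The $\alpha$-contributions telescope to $4\operatorname{Im}(a\bar\alpha)/(1-|a|^2)$ and the $\beta$-contributions to $\beta(1+|a|^2)/(1-|a|^2)$. The $\gamma$-contributions require slightly more care: using $|z_0|=1$ one verifies $\operatorname{Im}\bigl(\tfrac{z_0+a}{z_0-a}\bigr)=2\operatorname{Im}(a\bar z_0)/|z_0-a|^2$ and $\operatorname{Im}\bigl(\tfrac{az_0}{(z_0-a)^2}\bigr)=(1-|a|^2)\operatorname{Im}(a\bar z_0)/|z_0-a|^4$, and then the elementary simplification $(1+|a|^2)|z_0-a|^2+(1-|a|^2)^2=2(1+|a|^4)-2(1+|a|^2)\operatorname{Re}(a\bar z_0)$ produces the last summand in precisely the stated form.
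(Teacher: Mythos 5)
Your proposal is correct; I checked each step, including the residue normalizations and the final $\tilde\beta$ identity. The paper itself gives no argument here beyond the remark that the proposition ``follows from straightforward calculations,'' i.e.\ it implicitly intends a brute-force computation of $m_*V$ via the pushforward formula \eqref{eq:vfpf} followed by algebraic rearrangement. Your derivation organizes that computation more conceptually: you first use (a rotated form of) Proposition~\ref{prop:slitFields}, together with the facts that a M\"obius automorphism preserves semicompleteness and boundary tangency, to know a priori that $m_*V$ has the representation \eqref{eq:oftheform} with exceptional point $m(z_0)$; then the three parameters are pinned down by three pieces of local data: $\tilde\alpha=(m_*V)(0)=m'(a)V(a)$, the residue comparison $2\tilde\gamma\,m(z_0)^2=2\gamma\,(z_0 m'(z_0))^2$ together with the identity $(1-\bar a z_0)(z_0-a)=z_0|z_0-a|^2$ giving $\tilde\gamma=\gamma|m'(z_0)|^2$, and $(m_*V)'(0)=-(i\tilde\beta+\tilde\gamma)=V'(a)+\frac{2\bar a}{1-|a|^2}V(a)$ giving $\tilde\beta$. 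The auxiliary identities you quote for the $\gamma$-terms, namely $\im\frac{z_0+a}{z_0-a}=\frac{2\im(a\bar z_0)}{|z_0-a|^2}$, $\im\frac{az_0}{(z_0-a)^2}=\frac{(1-|a|^2)\im(a\bar z_0)}{|z_0-a|^4}$ and $(1+|a|^2)|z_0-a|^2+(1-|a|^2)^2=2(1+|a|^4)-2(1+|a|^2)\re(a\bar z_0)$, are all valid for $|z_0|=1$ and do reproduce the third summand in the stated formula, and the $\alpha$- and $\beta$-contributions come out as you say. The advantage of your route is that the only genuinely computational step is the single scalar identity for $\tilde\beta$, whereas the implicit route in the paper requires expanding the full rational function $m'(m^{-1}(z))V(m^{-1}(z))$ and recognizing its form; the paper's route, on the other hand, needs no appeal to Proposition~\ref{prop:slitFields} or to the preservation of tangency under boundary extension, which you assert without detailed proof (both are standard and also used elsewhere in the paper, so this is not a gap).
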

\begin{proof}
 Follows from straightforward calculations.
\end{proof}

\section{Stochastic holomorphic semiflows}
\label{sec:Kunita}

In this section we study stochastic differential equations generating stochastic flows of holomorphic self-maps of a simply connected domain $D\subset\mathbb{C}$.  Equations \eqref{eq:radialSDE}, \eqref{eq:chordalSDE}  and \eqref{eq:dipolarSDE} appearing in the classical $SLE$ theory are particular examples of such SDEs, and we use the results from the section later in the paper to define general slit holomorphic stochastic flows.

Kunita \cite{Kunita1984} considered the Stratonovich SDE
\[
 dG_t = b(G_t)\, dt  + \sum_{k=1}^n \sigma_k(G_t)\circ dB^k_t,
\]
on a paracompact manifold $M$, where $B^1_t, \ldots B^n_t$ are independent Brownian motions, and $b,\sigma_1, \ldots, \sigma_n$ are complete vector fields. Using a Lie theory approach, he showed that if the fields generate a finite-dimensional Lie algebra, then the corresponding flow consists of diffeomorphisms of $M$ (we quote this result in Appendix~\ref{app:completefields}).

In this section we prove two related results using the general Löwner theory. We restrict ourselves to the holomorphic case, but on the other hand, we allow the coefficient $b$ to be a time-dependent vector field, semicomplete for almost all  $t\geq 0$.

The following auxiliary statement is used in the proof of the main results of this section.

\begin{lemma}
\label{prop:complete}

 Let $\{M_{s,t}\}_{0\leq  s \leq t<\infty}$ be the evolution family associated to a Herglotz vector field $V(t, z),$ i.e., the functions $M_{s,t}$ satisfy the differential equation
\begin{equation}
\label{eq:appendixloewner}
\begin{cases}
 \frac{\partial }{\partial t}M_{s,t}(z) = V(t,M_{s,t}(z)),\\
M_{s,s}(z) = z,
\end{cases} \quad z\in D.
\end{equation}
Let $T\geq s$. The function $M_{s,T}$ is a holomorphic automorphism of the unit disk if and only if $V(t,z)$ is a complete vector field for almost all fixed $t\in [s,T]$.
\end{lemma}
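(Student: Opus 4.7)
The plan is to prove both implications using the fact that the complete holomorphic vector fields on $\mathbb{D}$ form the real $3$-dimensional Lie algebra $\mathfrak{g}$ of the Möbius group $G := \mathrm{Aut}(\mathbb{D})$ (parameterized e.g.\ by $\phi_{a,\theta}(z) = e^{i\theta}(z-a)/(1-\bar{a}z)$ with $(a,\theta)\in\mathbb{D}\times\mathbb{R}$), so that completeness of $V(t,\cdot)$ means precisely that it lies in $\mathfrak{g}$.

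For the direction \emph{complete $\Rightarrow$ automorphism}, assume $V(t,\cdot)$ is complete for a.e.\ $t \in [s,T]$. I would substitute the Ansatz $M_{s,t} = \phi_{a(t),\theta(t)}$ into~\eqref{eq:appendixloewner}. This reduces the Löwner PDE to a finite-dimensional Carathéodory system for the parameters $(a(t),\theta(t)) \in \mathbb{D}\times\mathbb{R}$; the $L^d$-bound in the Herglotz condition on $V$ guarantees that this system has a unique absolutely continuous solution with $(a(s),\theta(s)) = (0,0)$. The resulting path $t \mapsto \phi_{a(t),\theta(t)}$ solves~\eqref{eq:appendixloewner}, so by the essential uniqueness in Theorem~\ref{thm:LoewnerEq} it agrees with $M_{s,t}$; in particular $M_{s,T}\in G$.

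For the converse, assume $M_{s,T}\in G$. The first step is to upgrade this to the stronger statement that $M_{s,u}\in G$ for \emph{every} $u\in[s,T]$. Evolution families consist of univalent self-maps, and the cocycle relation $M_{s,T} = M_{u,T}\circ M_{s,u}$ yields
\[
\mathbb{D} = M_{u,T}\bigl(M_{s,u}(\mathbb{D})\bigr)\subseteq M_{u,T}(\mathbb{D})\subseteq \mathbb{D},
\]
forcing $M_{u,T}(\mathbb{D})=\mathbb{D}$ and hence $M_{u,T}\in G$; then $M_{s,u}(\mathbb{D})=M_{u,T}^{-1}(\mathbb{D})=\mathbb{D}$, so $M_{s,u}\in G$ as well. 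Writing $M_{s,t}=\phi_{a(t),\theta(t)}$, I would promote the pointwise-in-$z$ absolute continuity of $t\mapsto M_{s,t}(z)$ (which is what the Herglotz framework gives) to absolute continuity of the parameters $(a(t),\theta(t))$, by recovering them as smooth functions of $M_{s,t}$ evaluated at three fixed reference points of $\mathbb{D}$. Substituting $z=M_{s,t}^{-1}(w)$ in~\eqref{eq:appendixloewner} then gives, at a.e.\ $t$,
\[
V(t,w) = \partial_t M_{s,t}\bigl(M_{s,t}^{-1}(w)\bigr),
\]
which displays $V(t,\cdot)$ as the right translate by $M_{s,t}^{-1}$ of the tangent vector $\partial_t M_{s,t}\in T_{M_{s,t}}G$; hence $V(t,\cdot)\in\mathfrak{g}$ and so is complete.

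The main obstacle I expect is the technical transfer from pointwise-in-$z$ absolute continuity of $M_{s,t}(z)$ to absolute continuity of the finite-dimensional Möbius parameters of $M_{s,t}$; this is what legitimizes treating $\partial_t M_{s,t}$ as a genuine tangent vector to $G$ at almost every $t$, and hence closes the Lie-theoretic argument in the converse direction.
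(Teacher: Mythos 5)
Your proposal is sound, but it takes a genuinely different route from the paper. The paper disposes of both directions at once by a purely function-theoretic argument: it tracks the Schwarz--Pick quantity $\beta_t=|M_{s,t}'(0)|/(1-|M_{s,t}(0)|^2)$, computes from \eqref{eq:appendixloewner} that $\beta_T$ is the exponential of $\int_s^T \re\bigl[V'(r,M_{s,r}(0))+2V(r,M_{s,r}(0))\overline{M_{s,r}(0)}/(1-|M_{s,r}(0)|^2)\bigr]dr$, and then uses the semiflow-generator inequality $\re\bigl[V'(z)+2V(z)\bar z/(1-|z|^2)\bigr]\le 0$ for semicomplete fields, whose equality case (equality at one point forces equality everywhere and completeness) converts ``$M_{s,T}\in\mathrm{Aut}(\mathbb{D})$'' into ``$V(t,\cdot)$ complete for a.e.\ $t\in[s,T]$'' through a chain of equivalences. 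Your proof instead reduces everything to the three-dimensional parameter space of $\mathrm{Aut}(\mathbb{D})$; it is more conceptual (completeness $=$ membership in the Lie algebra) but needs more bookkeeping, which the paper's argument avoids entirely. Your cocycle argument upgrading $M_{s,T}\in\mathrm{Aut}(\mathbb{D})$ to $M_{s,u}\in\mathrm{Aut}(\mathbb{D})$ for all $u\in[s,T]$ is correct (univalence of evolution families is a standard fact of this theory), and your idea of recovering $(a(t),\theta(t))$ from point evaluations is the right way to legitimize differentiating the Möbius path.

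Two points in your outline still need to be nailed down. First, in the forward direction Carathéodory theory only gives a local solution of the parameter system, so you must rule out degeneration ($|a(t)|\to 1$ or $\theta(t)$ escaping) before time $T$; this is fixable, since on the interval of existence your Möbius path coincides with $M_{s,t}$, whence $|a(t)|=|M_{s,t}(0)|\le\sup_{[s,T]}|M_{s,\cdot}(0)|<1$, giving an integrable bound on the right-hand side and allowing continuation up to $T$. Also, the uniqueness you actually need there is uniqueness of solutions of the initial value problem $\dot w=V(t,w)$, $w(s)=z$ (Carathéodory plus holomorphy in $z$), rather than the ``essential uniqueness'' in Theorem~\ref{thm:LoewnerEq}, which concerns the uniqueness of the Herglotz field attached to a given evolution family. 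Second, in the converse the null set of times in \eqref{eq:appendixloewner} a priori depends on $z$; your parameter device repairs this provided you say it: at a.e.\ $t$ the parameters are differentiable and the ODE holds at every point of a fixed countable dense set, so $V(t,\cdot)$ coincides with the quadratic polynomial $\partial_t M_{s,t}\circ M_{s,t}^{-1}$ everywhere by the identity theorem, and that polynomial has the complete form, exactly as in \eqref{eq:coeffrecalc}. With these details supplied, your Lie-theoretic proof is complete; the paper's proof buys brevity at the cost of invoking the equality case of the cited inequality for semigroup generators.
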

\begin{proof}
Without loss of generality, we assume that $D = \mathbb{D}$.

 Denote by
\[
 \beta_t:= \frac{|M'_{s,t}(0)|}{1-|M_{s,t}(0)|^2}.
\]
By the Schwarz-Pick lemma, $\beta_t \leq 1,$ and the equality holds if and only if $M_{s,t} \in \mathrm{Aut}(\mathbb{D})$.
It follows from (\ref{eq:appendixloewner}) that 
\[
 \beta_t = \exp \int_s^t \re \left[ V'(r,M_{s,r}(0))+2 \, \frac{V(r,M_{s,r}(0)) \overline{M_{s,r}(0)}}{1 - |M_{s,r}(0)|^2} \right]dr.
\]

A well-known result from the theory of holomorphic semiflows states that 
\[
 \re \left[ V'(z) + 2 \, \frac{V(z)\, \bar{z}}{1-|z|^2} \right] \leq 0, \quad z\in \mathbb{D},
\]
for any semicomplete vector field $V(z)$ (see, e.g., \cite[Proposition 3.5.4]{ShoikhetSeimgroupsGFT},  but note that a different sign convention is used there). If, moreover, the equality holds for some $z\in \mathbb{D}$, then it holds as well for all $z\in \mathbb{D},$ and $V(z)$ is complete.

We conclude the proof by noting that, 
 \[M_{s,T} \in \mathrm{Aut}(\mathbb{D}) \Leftrightarrow  \beta_T = 1 \]
\[
\Leftrightarrow  \re \left[ V'(t,M_{s,t}(0))+2 \, \frac{V(t,M_{s,t}(0)) \,\overline{M_{s,t}(0)}}{1 - |M_{s,t}(0)|^2} \right] = 0 \textrm{ for almost all } t\in [s,T]
\]
\begin{multline*}
\Leftrightarrow  \re \left[ V'(t,z)+2 \, \frac{V(t,z) \,\overline{z}}{1 - |z|^2} \right] = 0 \quad  \textrm{ for almost all } t\in [s,T] \textrm{ and all }z\in \mathbb{D} 
\end{multline*}
\[
 \Leftrightarrow V(t,z) \textrm{ is a complete vector field for almost all }t\in[s,T].
\]
\end{proof}

We can use direct computations to find an explicit expression of the Herglotz vector field  $V(t,z)$ in terms of the evolution family $M_{s,t}$. Suppose $\{M_{s,t}\}_{0 \leq s \leq t <+\infty}$ is an evolution family consisting of automorphisms of the unit disk, so that
\[
M_{s,t}(z) = e^{i \Theta_{s,t}} \frac{z-A_{s,t}}{1 - \bar{A}_{s,t} z}.
\]

The function $M_{s,t}(z)$ is differentiable for almost all $t,$ therefore
\begin{dmath}
 \frac{\partial}{\partial t} M_{s,t}(z) 
= - \frac{e^{i\Theta_{s,t}} \dot{A}_{s,t}}{1 - |A_{s,t}|^2} + i \left( \frac{2 \im A_{s,t} \dot{\bar{A}}_{s,t}}{1-|A_{s,t}|^2} + \dot{\Theta}_{s,t}\right) M_{s,t}(z) + \frac{e^{-i\Theta_{s,t}} \dot{\bar{A}}_{s,t}}{1-|A_{s,t}|^2}\, M^2_{s,t}(z),
\end{dmath}

so that
\begin{dmath}
\label{eq:coeffrecalc}
 V(t,z) =- \frac{e^{i\Theta_{s,t}} \dot{A}_{s,t}}{1 - |A_{s,t}|^2} + i \left( \frac{2 \im A_{s,t} \dot{\bar{A}}_{s,t}}{1-|A_{s,t}|^2} + \dot{\Theta}_{s,t}\right) \,z + \frac{e^{-i\Theta_{s,t}} \dot{\bar{A}}_{s,t}}{1-|A_{s,t}|^2}\, z^2 
=- \frac{e^{i\Theta_{0,t}} \dot{A}_{0,t}}{1 - |A_{0,t}|^2} + i \left( \frac{2 \im A_{0,t} \dot{\bar{A}}_{0,t}}{1-|A_{0,t}|^2} + \dot{\Theta}_{0,t}\right) \,z + \frac{e^{-i\Theta_{0,t}} \dot{\bar{A}}_{0,t}}{1-|A_{0,t}|^2}\, z^2.
\end{dmath}

Now we return back to the theory of stochastic flows and study diffusion equations generating flows of holomorphic endomorphisms of the domain $D$. 
\begin{theorem}
\label{th:ourKunita}
Let $D$ be a simply connected hyperbolic domain, and consider the flow
\begin{equation}
\label{eq:theoreEq1}
\begin{cases}
 dG_t(z) = b(t, G_t(z))\,dt + \sum^n_{k=1}\sigma_k(G_t(z)) \circ dB^k_t,\\
G_0(z) = z,
\end{cases} \quad z\in D,
\end{equation}
where $B^1_t, \ldots, B^n_t$ are independent standard Brownian motions, the fields $\sigma_1(z), \ldots, \sigma_n(z)$ are complete, and $b(t,z)$ is a Herglotz vector field such that the maps $t\mapsto b(t,z)$ are continuous for each fixed $z\in D$.

Let  $H_t(z)$ be the flow 
\begin{equation}
\label{eq:theoreEq2}
 \begin{cases}
  dH_t(z) = \sum^n_{k=1}\sigma_k(H_t(z)) \circ dB^k_t,\\
   H_0(z) = z,
 \end{cases}z\in D,
\end{equation}
and denote by $g_t(z)$ the composite flow $g_t(z):= H^{-1}_t \circ G_t$.

Then
\begin{enumerate}
 \item the solutions to (\ref{eq:theoreEq1}) and (\ref{eq:theoreEq2}) exist for all $z\in D$ and for all $t\geq 0;$
 \item almost surely, $H_t \in \mathrm{Aut}(D)$ for all $t\geq 0;$
 \item almost surely, $G_t \in \mathrm{Hol}(D,D)$ for all $t\geq 0;$
 \item \label{stat:last} if moreover, $b(t_0, z)$ is complete for almost every $t_0\geq 0$, then, a.s., $G_t\in \mathrm{Aut}(D);$
 \item $g_t$ can be embedded into an $L^{\infty}$ evolution family $g_{s,t}$ (by putting $g_t = g_{s,t}),$ corresponding to the Herglotz vector field
\begin{equation}
\label{eq:herglotzfieldpf}
 V(t,z) = {H_{t}^{-1}}_*\, b (t,z) = \frac{1}{H_t'(z)} \,b(t,H_t(z)).
\end{equation}

\end{enumerate} 

\end{theorem}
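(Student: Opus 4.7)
The plan is to decompose the stochastic flow as $G_t=H_t\circ g_t$, where $H_t$ absorbs the martingale part and $g_t$ is a pathwise-deterministic, time-dependent Löwner flow. Since each $\sigma_k$ is a complete holomorphic vector field on the simply connected hyperbolic domain $D$, the set $\{\sigma_1,\dots,\sigma_n\}$ lies in the three-dimensional real Lie algebra $\mathrm{aut}(D)$. Applying the variant of Kunita's theorem recorded in Appendix~\ref{app:completefields} yields a global solution of (\ref{eq:theoreEq2}) such that, almost surely, $H_t\in\mathrm{Aut}(D)$ for all $t\ge 0$, which immediately gives statement~(2).

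For each $\omega$ in a probability-one event, I would define
\[
V(t,z):=\frac{b(t,H_t(z))}{H_t'(z)}=\bigl(H_t^{-1}\bigr)_{*}b(t,\cdot)(z),\qquad z\in D,\ t\ge 0,
\]
and verify pathwise that $V$ is a Herglotz vector field: holomorphy in $z$ is immediate; measurability in $t$ follows from continuity of $t\mapsto H_t$ in $\mathrm{Aut}(D)$ combined with the Herglotz measurability of $b$; the local $L^d$-bound on compacta follows because $\{H_t(K):t\in[0,T]\}$ stays inside a compact subset of $D$ and $|H_t'|$ is bounded away from $0$ on compacta; and semicompleteness of $V(t,\cdot)$ for a.e.\ $t$ comes from the fact that conjugation of a semiflow by an automorphism is again a semiflow, so pushforward by an automorphism preserves semicompleteness. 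Theorem~\ref{thm:LoewnerEq}, applied pathwise, then produces a unique evolution family $\{g_{s,t}\}$ associated to $V$; set $g_t:=g_{0,t}$, so that $g_t\in\mathrm{Hol}(D,D)$ and $\partial_t g_t=V(t,g_t)$ for almost every $t$, establishing statement~(5).

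A direct application of the Stratonovich--Kunita chain rule to the composition of the stochastic flow $H_t$ with the pathwise $C^1$-in-$t$ family $g_t$ (no It\^o correction arises because $g_t$ has zero martingale part) gives
\[
d(H_t\circ g_t)=\sum_{k=1}^{n}\sigma_k(H_t\circ g_t)\circ dB_t^k+H_t'(g_t)\,\dot g_t\,dt,
\]
and the identity $H_t'(g_t)\,V(t,g_t)=b(t,H_t(g_t))$ shows that $H_t\circ g_t$ solves (\ref{eq:theoreEq1}). Hence $G_t=H_t\circ g_t$ (either by uniqueness for the SDE or by taking this as the construction), which yields (1) and (3). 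For (4), if $b(t_0,\cdot)$ is complete for almost every $t_0$, then its pushforward $V(t_0,\cdot)$ under the automorphism $H_{t_0}^{-1}$ is also complete (completeness being a diffeomorphism invariant), and Lemma~\ref{prop:complete} applied pathwise yields $g_T\in\mathrm{Aut}(D)$, whence $G_T=H_T\circ g_T\in\mathrm{Aut}(D)$.

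The main obstacle I anticipate is the careful pathwise verification of the Herglotz conditions for the random vector field $V$ — in particular, the joint $(\omega,t)$-measurability and the $L^d$ bounds on compacta — so that the deterministic general Löwner theory of Theorem~\ref{thm:LoewnerEq} can indeed be invoked for each $\omega$ separately. The algebraic chain-rule computation is routine, but the interplay between the stochastic flow $H_t$ and the application of a deterministic analytic theory pathwise is where the technical care is required.
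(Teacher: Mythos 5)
Your proposal is correct and takes essentially the same route as the paper: decompose through the automorphic flow $H_t$ (Kunita's theorem for the finite-dimensional Lie algebra generated by the $\sigma_k$), pass to the pushforward field $V(t,z)={H_t^{-1}}_*\,b(t,z)$, check it is Herglotz of order $\infty$, and invoke Theorem~\ref{thm:LoewnerEq} together with Lemma~\ref{prop:complete} for statement (4). The only difference is cosmetic: the paper computes $d(H_t^{-1}\circ G_t)$ via the inversion/composition formulas so the martingale terms cancel, whereas you construct $g_t$ from the Löwner theory first and identify $H_t\circ g_t$ with $G_t$ by uniqueness of the maximal SDE solution — both directions are sound.
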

\begin{proof}

Again, we assume in the proof that $D = \mathbb{D}$.

The flow $\{H_t\}_{t\geq 0}$ of SDE (\ref{eq:theoreEq2}) consists of automorphisms of $\mathbb{D}$ and exists for all $t\geq0$. Indeed, if $n=1$, the solution to \eqref{eq:theoreEq2} is given by 
\[
 H_t(z) = h_{B_t}(z),
\]
where $h_t(z)$ is the deterministic flow of automorphisms
\[
 \begin{cases}
  dh_t(z) = \sigma_1(h_t(z)) \,dt,\\
   h_0(z) = z,
 \end{cases}z\in \mathbb{D},\, t\in \mathbb{R}.
\]
In the general case, the Lie algebra generated by $\sigma_1, \ldots, \sigma_n$  is finite-dimensional (has dimension at most 3), and $\{H_t\}_{t\geq 0} \subset \mathrm{Aut}(\mathbb{D})$ by Kunita's theorem (Theorem \ref{th:Kunita}, Appendix \ref{app:completefields}).

By \eqref{eq:invflow} from Appendix \ref{subs:compinvflow}, the inverse flow $\{H^{-1}_t\}_{t\geq 0}$  satisfies
\[
 \begin{cases}
  dH^{-1}_t (z) = -  \sum^n_{k=1}\,{H^{-1}_{t}}_*\, \sigma_k (H^{-1}_t(z)) \circ dB^k_t,\\
 H_0(z) = z,
 \end{cases} z\in \mathbb{D},
\]
and by \eqref{eq:compflow}, the composite flow $\{g_t\}, $ where $g_t := H_t^{-1} \circ G_t,$  satisfies
\begin{dmath*}
 dg_t(z) = - \sum^n_{k=1}{H^{-1}_{t}}_*\, \sigma_k \,(g_t(z))\circ dB^k_t + {H^{-1}_{t}}_*\, b\,(t, g_t(z))\, dt + \sum^n_{k=1}{H^{-1}_{t}}_*\, \sigma_k( g_t(z)) \circ dB^k_t 
= {H^{-1}_{t}}_*\, b\,(t, g_t(z)) \,dt.
\end{dmath*}

In other words, $\{g_t\}_{t\geq 0}$ is the solution to the problem
\[
 \begin{cases}
 \frac{\partial}{\partial t} g_t(z) =V(t,g_t(z)),\quad t\geq 0,\\
 g_0(z) = z,
 \end{cases}z\in \mathbb{D}.
\]
where $V(t,z)$ is the time-dependent random vector field given by $V(t,z) := {H^{-1}_{t}}_*\, b\,(t, z)$. Since $b(t,z)$ is semicomplete for almost all $t\geq 0, $ and since $H^{-1}_t \in \mathrm{Aut}(\mathbb{D})$ for all $t\geq 0, $ the vector field ${H^{-1}_{t}}_*\, b\,(t, z)$ is semicomplete for almost every $t\geq0$  (as a pushforward of a semicomplete field by a disk automorphism). Moreover, since both $H^{-1}_t(z)$ and $b(t,z)$ are continuous with respect to $t,$ the map $t\mapsto {H^{-1}_{t}}_*\, b\,(t, z)$ is also continuous for each $z\in \mathbb{D}$, therefore, the vector field $V(t,z)$ is a Herglotz vector field of order $\infty$. 

By Theorem~\ref{thm:LoewnerEq}, we conclude that the family $\{g_t\}_{t\geq 0}$ can be embedded into an evolution family of order $\infty,$ and in particular, $\{g_t\}_{t\geq 0}\subset \mathrm{Hol}(\mathbb{D}, \mathbb{D})$. 

It follows that $G_t = H_t \circ g_t \in \mathrm{Aut}(\mathbb{D})$ for all $t\geq 0$.  Thus, the solution $G_t(z)$ exists for all $t\geq 0$ and for all $z\in \mathbb{D}$ almost surely.

If we additionally assume that $b(t_0, z)$ is complete for almost every $t_0\geq 0$, then $V(t,z)$ is a complete vector field for almost all $t\geq 0$. By Lemma \ref{prop:complete}, $\{g_t\}_{t\geq 0}\subset \mathrm{Aut}(\mathbb{D})$, $\{G_t\}_{t\geq 0}\subset \mathrm{Aut}(\mathbb{D})$, and statement \ref{stat:last} follows.
\end{proof}

The following theorem is a dual (i.e., time-reversed) version of Theorem~\ref{th:ourKunita}, and is more relevant to the $SLE$ theory.

\begin{theorem}
\label{th:ourReverseKunita}
  Let $G_t(z)$ be the solution to
\[
 dG_t(z) = -b(t,G_t(z)) \,dt + \sum^n_{k=1}\sigma_k(G_t(z)) \circ dB^k_t,\quad G_0(z)= z, \quad z\in D,
\]
where $B^1_t, \ldots, B^n_t$ are independent standard Brownian motions, the fields $\sigma_1(z), \ldots, \sigma_n(z)$ are complete, and $b(t,z)$ is a Herglotz vector field. Let the maps $t\mapsto b(t,z)$ be continuous for every fixed $z\in D$.

Let  $H_t(z)$ be the flow of 
\begin{equation}
 \begin{cases}
  dH_t(z) = \sum^n_{k=1}\sigma_k( H_t(z)) \circ dB^k_t,\\
   H_0(z) = z,
 \end{cases}z\in D,
\end{equation}
and denote by $g_t(z)$ the composite flow $H^{-1}_t \circ G_t$.

Let $T(z,\omega)$ be the explosion time of $G_t(z),$ and $D_t :=\{z:T(z)>t\} \subset D$. Then
\begin{enumerate}
 \item $G_t$ maps conformally $D_t$ onto $D;$
  \item the family $\{g^{-1}_t\}_{t\geq 0}$ is a decreasing L\"owner chain of order $\infty;$
\item  the Herglotz vector field of order $\infty$ corresponding to $g_{t}$ is given by
\begin{equation}
\label{eq:herglotzfieldreversepf}
 V(t,z) = -{H^{-1}_{t}}_*\, b (t,z) = -\frac{1}{H'_t(z)} \,b(t, H_t(z)).
\end{equation}
\end{enumerate}
\end{theorem}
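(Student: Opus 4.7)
The plan is to mirror the proof of Theorem~\ref{th:ourKunita} almost step for step, reverse the sign of the drift at the appropriate place, and then substitute Theorem~\ref{thm:gumenyukDuality} for Theorem~\ref{thm:LoewnerEq} in the final appeal. Assume without loss of generality that $D=\mathbb{D}$.

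Just as in Theorem~\ref{th:ourKunita}, the complete vector fields $\sigma_1,\ldots,\sigma_n$ generate a Lie algebra of real dimension at most three, so Kunita's theorem gives $H_t\in\mathrm{Aut}(\mathbb{D})$ almost surely for every $t\geq 0$, and the inverse flow $H_t^{-1}$ satisfies
\begin{equation*}
 dH_t^{-1}(z) = -\sum_{k=1}^n {H_t^{-1}}_*\,\sigma_k(H_t^{-1}(z))\circ dB_t^k
\end{equation*}
by the inversion formula recalled in Appendix~\ref{subs:compinvflow}. Applying the Stratonovich composition rule to $g_t=H_t^{-1}\circ G_t$, the noise terms cancel identically, and because the drift of $G_t$ is now $-b$ instead of $+b$ the only surviving contribution is the pathwise random ODE
\begin{equation*}
 \frac{\partial g_t(z)}{\partial t} = -{H_t^{-1}}_*\,b(t,g_t(z)), \qquad g_0(z)=z.
\end{equation*}

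Next I would check that the right-hand side defines, pathwise, a vector field fitting the Herglotz framework. For each realization $\omega$ and almost every $t$, the field $z\mapsto b(t,z)$ is semicomplete; since $H_t\in\mathrm{Aut}(\mathbb{D})$, its pushforward ${H_t^{-1}}_*\,b(t,\cdot)$ is semicomplete as well. Holomorphicity in $z$ is inherited from $b$ and $H_t$; measurability (in fact continuity) in $t$ follows from the assumed continuity of $t\mapsto b(t,z)$ and continuity of the flow $t\mapsto H_t(z)$; the required $L^\infty$ bound on compact subsets of $\mathbb{D}$ follows from the corresponding bound on $b$ together with pathwise local boundedness of $H_t$ and of $1/H_t'$. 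Setting $V(t,z):=-{H_t^{-1}}_*\,b(t,z)$ as in the statement casts the ODE above in the form $\partial_t g_t = V(t,g_t)$, which matches the decreasing Löwner setup of Theorem~\ref{thm:gumenyukDuality} (with the sign playing a pure bookkeeping role).

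Applying Theorem~\ref{thm:gumenyukDuality} pathwise then immediately produces, for each $\omega$: a maximal simply connected evolution domain $\widetilde D_t$ on which $g_t$ is defined and maps conformally onto $\mathbb{D}$, and the fact that $\{g_t^{-1}\}_{t\geq 0}$ is a decreasing Löwner chain of order $\infty$. To transfer these properties to $G_t$ I would use only $H_t\in\mathrm{Aut}(\mathbb{D})$: the identity $G_t=H_t\circ g_t$ shows that $G_t(z)$ remains in $\mathbb{D}$ if and only if $g_t(z)$ does, so the explosion time $T(z)$ of $G_t(z)$ coincides with the maximal existence time of $g_t(z)$ from Theorem~\ref{thm:gumenyukDuality}; therefore $D_t=\{z:T(z)>t\}=\widetilde D_t$, and $G_t\colon D_t\to\mathbb{D}$ is conformal as the composition of the conformal $g_t\colon D_t\to\mathbb{D}$ with the automorphism $H_t\colon\mathbb{D}\to\mathbb{D}$.

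The principal obstacle is not the composition calculus, which is essentially cosmetic given Theorem~\ref{th:ourKunita}, but the pathwise verification that the random field $V(t,z)$ genuinely satisfies the Herglotz axioms uniformly enough on each Brownian path to feed into the deterministic machinery of Theorem~\ref{thm:gumenyukDuality}; this reduces to pathwise distortion control on compacts of $\mathbb{D}$ for the automorphism $H_t$, which is a standard consequence of the Schwarz--Pick lemma.
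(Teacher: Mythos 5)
Your proposal is correct and follows essentially the same route as the paper, whose proof of this theorem is literally to repeat the proof of Theorem~\ref{th:ourKunita} verbatim (Kunita's theorem for $H_t$, the inverse- and composite-flow formulas to cancel the noise and reduce $g_t=H_t^{-1}\circ G_t$ to a pathwise ODE driven by the pushforward of $b$, and the pathwise Herglotz verification), substituting Theorem~\ref{thm:gumenyukDuality} for Theorem~\ref{thm:LoewnerEq} at the final step. Your additional remarks on transferring the conclusions to $G_t=H_t\circ g_t$ and identifying explosion times via $H_t\in\mathrm{Aut}(\mathbb{D})$ are exactly the implicit content of that substitution, so there is no gap.
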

\begin{proof}
 The proof of this theorem repeats the proof of Theorem~\ref{th:ourKunita} verbatim, except for the use of Theorem~\ref{thm:gumenyukDuality} instead of Theorem~\ref{thm:LoewnerEq}.
\end{proof}

In the rest of the paper we restrict the number of Brownian motions in Theorem~\ref{th:ourReverseKunita} to one ($n=1$), and take $b(t,z)$ to be a time-independent slit holomorphic vector field, i.e., a field of the form \eqref{eq:ellrepresentation}.

\section{General slit L\"owner chains}
\label{sec:GSLE}
The general Löwner theory provides us with the means for studying an exceptionally rich family of processes in a complex domain. Results of the previous section suggest, however, that we should concentrate our attention on chains generated by Herglotz vector fields of form \eqref{eq:herglotzfieldreversepf}, because they are closely related to diffusion processes.

Furthermore, if we require slit geometry for the evolution domains, the vector field $b(z)$   should be of the form \eqref{eq:ellrepresentation} (we will prove in Section~\ref{sec:geometry} that this choice leads to slit geometry). 

These considerations motivate the following definition.

\begin{definition}
\label{d8}
Let $b(z)$ be a slit vector field in $D$,
\[
 b = b_{-2} \ell_{-2} + b_{-1} \ell_{-1} + b_0 \ell_0 + b_1\ell_1, \quad   b_{-2} >0, \,b_{-1},\, b_0, \, b_1 \in \mathbb{R},
\] 
 and $\sigma(z)$ be a complete vector field in $D$,
\[
 \sigma = \sigma_{-1} \ell_1 + \sigma_0 \ell_0 + \sigma_1 \ell_1, \quad \sigma_{-1}, \, \sigma_0, \, \sigma_1 \in \mathbb{R},
\]
such that $\sigma_{-1} \neq 0$. Let $\{h_t\}_{t\in \mathbb{R}}$ be the flow of automorphisms of $D$ generated by $\sigma(z),$ i.e.,
\[
 \begin{cases}
  \frac{\partial}{\partial t} h_t(z) = \sigma(h_t(z)), \\
 h_0(z) = z, \quad z\in D.
 \end{cases}
\]

Let $u_t:[0,+\infty) \to \mathbb{R}$ be a continuous real-valued function, such that $u_0 = 0, $ and let
\[
 V(t,z) :=  ({h^{-1}_{u_t}}_*\, b)(z) =  \frac{1}{h'_{u_t}(z)} \,b(h_{u_t}(z)).
\]
The family of maps $\{g_t\}_{t\geq 0}$ solving the problem
\begin{equation}
\label{eq:gslitloewner}
 \begin{cases}
  \frac{\partial}{\partial t} g_t(z) =  - V(t,g_t(z)), \quad t\geq 0,\\
  g_0(z) = z, \quad z\in D,
 \end{cases}
\end{equation}
is called a \emph{general slit L\"owner chain} driven by $b,$ $\sigma$ and $u_t$. The differential equation in \eqref{eq:gslitloewner} is called the \emph{general slit Löwner equation}.
\end{definition}

In the terminology of \cite{gumenyukDuality}, the family $\{g^{-1}_t\}_{t\geq 0}$ is a reverse L\"owner chain. By Theorem~\ref{thm:gumenyukDuality}, we know that for each $t\geq0,$ $g_t$ maps the simply connected domain
\[
 D_t := \{z\in D :  \textrm{ the solution of (\ref{eq:gslitloewner}) is defined up to time }t\},
\]
conformally onto $D$. We call the domain $D_t$ the \emph{evolution domain of \eqref{eq:gslitloewner}} at time $t,$ the domain $D = D_0$ the \emph{canonical domain} and the set $K_t:=D\setminus D_t$ the hull generated by the Löwner chain $\{g_t\}_{t\geq 0}$ at  time $t$.

If we take the driving function to be  $u_t =  \sqrt{\kappa}\,B_t$ $\kappa >0,$  then $G_t:= h_{u_t} \circ g_t$ satisfies
\[
 \begin{cases}
  dG_t(z) = -b(G_t(z)) \,dt +  \sqrt{\kappa}\,\sigma (G_t(z)) \circ dB_t,\quad t\geq 0,\\
  G_0(z) = z, 
 \end{cases}z\in \mathbb{D}.
\]
In this case we call $\{G_t\}_{t\geq 0}$ a \emph{slit holomorphic stochastic flow} driven by $b$ and $\sigma$.

In Section~\ref{sec:geometry} we show that the local behavior of the hulls generated by this stochastic flow is similar to the hulls of classical $SLEs$. This suggests an alternative term for $\{G_t\}_{t\geq 0}$, namely, $(b,\sigma)$-$SLE_{\kappa}$.

\subsection{Classical \texorpdfstring{$SLEs$}{SLEs} as special cases} Using different combinations of the slit vector field $b$ and the complete vector field $\sigma$ we can obtain all classical versions of the L\"owner equation.

For instance, the SDE corresponding to the \textbf{chordal} $SLE$ in $\mathbb{H}$ is 
\[ dG_t(z) = \frac{2}{G_t(z)} \,dt - \sqrt{\kappa} \, dB_t, \quad G_0(z) = z, \, z\in \mathbb{H},
\]
so that 
\[
 b(z) = -\frac{2}{z} = 2\ell^{\mathbb{H}}_{-2}(z), \quad \sigma(z)= -1 = \ell^{\mathbb{H}}_{-1}(z).
\]

In the \textbf{radial} case,
\[
 dG_t(z) = G_t(z) \frac{1+ G_t(z)}{1-G_t(z)} \,dt - i \sqrt{\kappa} G_t  \circ dB_t, \quad  G_0(z) = z, \quad z \in \mathbb{D},
\]
so that
\[
 b(z) = - z\,\frac{1 + z}{1-z} = 2 \,\ell^{\mathbb{D}}_{-2} (z) + \frac12 \,\ell^{\mathbb{D}}_{0}(z),
\]
\[
 \sigma(z) = - i z = \ell^{\mathbb{D}}_{-1}(z) + \frac14 \ell^{\mathbb{D}}_1(z).
\]

In the \textbf{dipolar} case
\[
  d G_t(z) = \frac{dt}{\tanh[G_t(z)/2]} - \sqrt{\kappa} \,dB_t,\quad  G_0(z) = z, \quad z\in \mathbb{S},
\]
and
\[
 b(z) = -\frac{1}{\tanh[z/2]}= 2 \,\ell^{\mathbb{D}}_{-2} (z) - \frac12 \,\ell^{\mathbb{D}}_{0}(z), \quad \sigma(z) = -1= \ell^{\mathbb{S}}_{-1}(z)-\frac14\ell^{\mathbb{S}}_{1}(z).
\]
(see the  expressions for $\ell_n^{\mathbb{S}}$ in \eqref{eq:stripell}).

We summarize this in the table below.
\bigskip
\begin{center}
\begin{tabular}{|c|c|c|}
\hline 
$SLE$ type & $b$ & $\sigma$ \\ 
\hline 
Chordal & $2 \ell_{-2}$ & $\ell_{-1}$ \\ 
\hline 
Radial & $2 \ell_{-2} + \frac12 \ell_{0}$ & $\ell_{-1}+\frac14\ell_{1}$ \\ 
\hline 
Dipolar & $ 2 \ell_{-2} - \frac12 \ell_{0}$ & $\ell_{-1}-\frac14\,\ell_{1}$ \\ 
\hline 
\end{tabular} 
\end{center}
\bigskip

Similar representations for the fields $b$ and $\sigma$ in terms of $\ell_n$ were first introduced for the chordal case in \cite{BB02}, for the radial case in \cite{BBRadial} and for the dipolar case in \cite{bauer2004zigzag}.

Let us now take a look at two examples of slit holomorphic stochastic flows that have not been studied previously.

\subsection{Example 1: \texorpdfstring{$ABP$ $SLE$}{ABP SLE}}
Let us choose $b$ as in the chordal case, and $\sigma$ as in the radial case, i.e.,  we take
\[
b =2 \ell_{-2},\quad \sigma = \ell_{-1}+\frac14\ell_{1},\]
which can be written explicitly in the unit disk as
\[
 b^{\mathbb{D}}(z) =\frac14\,\frac{(z+1)^3}{z-1}, \quad \sigma^{\mathbb{D}}(z) = - i z.
\]
The complete vector field $\sigma^{\mathbb{D}}(z)$ generates the flow 
\[
 h_t(z) = z e^{-it},
\]
and then a real-valued continuous driving function $u_t$ determines the Herglotz vector field
\begin{equation}
\label{eq:abpfield}
 V(t,z) = \frac{1}{h'_{u_t}(z)} \, b(h_{u_t}(z)) = -\frac{1}{ 4 e^{i u_t}} \frac{(e^{iu_t} + z)^3}{e^{iu_t} - z}.
\end{equation}

We arrive at the differential equation
\begin{equation}
\begin{cases}
\frac{\partial}{\partial t} g_t(z) = \frac{1}{ 4\,e^{i u_t}} \frac{\left( e^{i u_t} + g_t(z) \right)^3}{e^{i u_t} -g_t(z) }, \\
g_0(z) = z, \quad z \in \mathbb{D}.
\end{cases}
\end{equation}

If we put $u_t = \sqrt{\kappa} \, B_t,$ then the composition
\[
 G_t(z) = h_{\sqrt{\kappa} \, B_t} \circ g_t(z) = \frac{g_t(z)}{e^{i\,\sqrt{\kappa}\,B_t}}
\]
satisfies the diffusion equation
\begin{equation}
\label{eq:abpdiff}
\begin{cases}
 dG_t(z) =-  \frac14\frac{(G_t(z) +1)^3}{G_t(z) - 1} \, dt-   i\,\sqrt{\kappa}\, G_t(z) \circ dB_t, \\
G_0(z) = z,
\end{cases} z\in \mathbb{D}.
\end{equation}

In \cite{Ivanov2012a}, Herglotz vector fields of the form 
\begin{equation}
\label{eq:ABPfield}
 V(t,z) = \frac{(\tau_t - z)^2}{\tau_t} \,p\left(z/\tau_t\right), \quad \re p(z) \geq 0,\quad \tau_t\in \partial \mathbb{D},
\end{equation}
were considered. For each fixed $t_0\geq 0,$ the point $\tau_{t_0}$ is the attracting point of the the semiflow generated by the vector field $V(t_0,z)$.  By putting $\tau_t = -e^{i \sqrt{\kappa} B_t},$ $p(z) = \frac14\,\frac{1+z}{1-z},$  we arrive at the same vector field as in \eqref{eq:abpfield}. By that reason we call the stochastic flow \eqref{eq:abpdiff} $ABP$ $SLE$ ($ABP$ stands for \emph{attracting boundary point}).

The vector fields $b(z)$ and $\sigma(z)$ have no common zeros, therefore the functions of the flow $\{g_t\}_{t\geq 0}$ have no common fixed points in $\hat{\mathbb{D}}$, in contrast to the classical cases of $SLE$.

The results of five numerical simulations for $\kappa = 2$, done using a modification of the Zipper algorithm \cite{MarshallZipper}, are shown in Figure~\ref{fig:ABP}. The simulations suggest that the hulls are generated by fractal curves, similar to the case of classical $SLEs$. We give a proof of this statement later in this section. Note that due to the lack of common fixed points of the fields $b(z)$ and $\sigma(z),$ the curves terminate at random points of $\mathbb{D}$.

\begin{figure}[!ht]
 \centering
 \includegraphics[width=6cm,keepaspectratio = true]{./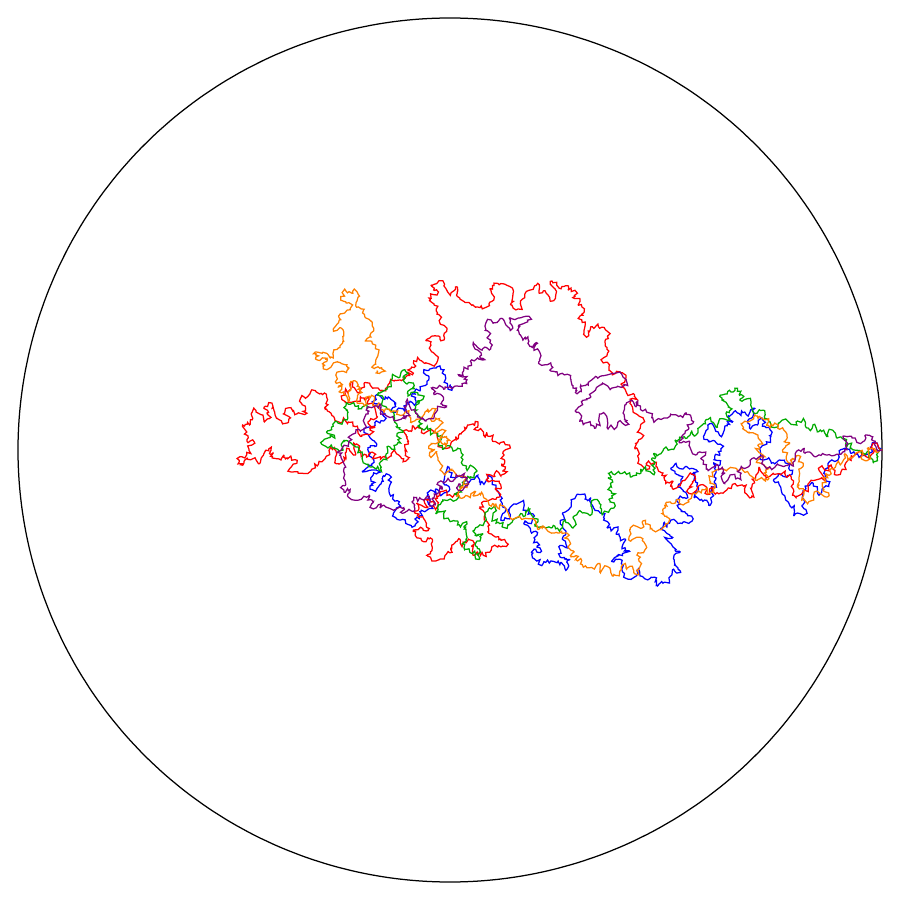}
 \caption{Sample curves of $ABP$ $SLE$. }
 \label{fig:ABP}
\end{figure}

\subsection{Example 2}
Let us now take $b(z)$ as in the radial case, and $\sigma(z)$ as in the chordal case, that is,
\[
 b = 2 \ell_{-2} + \frac12 \ell_0, \quad
 \sigma = \ell_{-1},
\]
 which can be written explicitly in $\mathbb{D}$ as
\[
 b^{\mathbb{D}}(z) =  z\,\frac{ z+1}{z-1}, \quad \sigma^{\mathbb{D}}(z) = -\frac{1}{4} i\, (z+1)^2,
\]
or in $\mathbb{H}$ as
\[
 b^{\mathbb{H}}(z) = -\frac{2}{z} - \frac12\, z, \quad \sigma^{\mathbb{H}}(z) = -1.
\]
Note that the vector fields have a common zero at $-1$ (if we work in the unit disk), or at $\infty$ (if we work in the half-plane).

From the technical point of view, in this example it is significantly easier to work with the half-plane in this example. 

The vector field $\sigma^{\mathbb{H}}(z)$ generates the flow
\[
 h_t(z) = z -t, \quad z\in \mathbb{H},
\]
of automorphisms of $\mathbb{H},$ and a driving function $u_t$ leads to the Herglotz vector field
\[
 V(t,z) = \frac{1}{h'_{u_t}(z)} \,b^{\mathbb{H}}(h_{u_t}(z)) = -\frac{2}{z-u_t} - \frac12 \,(z-u_t).
\]
For $u_t = \sqrt{\kappa}B_t$ we obtain the following ordinary and stochastic differential equations:
\[
\begin{cases}
 \frac{\partial}{\partial t}g_t(z) = \frac{2}{g_t(z) - \sqrt{\kappa} \,B_t} + \frac12 \, (g_t(z) - \sqrt{\kappa}\,B_t), \\
g_t(z) = z,
\end{cases} z\in \mathbb{H},
\]
\begin{equation}
\label{eq:antiabpflow}
\begin{cases}
  dG_t(z) = \left(\frac{2}{G_t(z)} + \frac12 G_t(z)\right)\,dt - \sqrt{\kappa} dB_t,\\
G_t(z)= z, 
\end{cases}z\in \mathbb{H}, 
\end{equation}
where $G_t(z) = g_t(z) - \sqrt{\kappa}\,B_t$.

Results of numerical simulations in the unit disk for $\kappa = 3$ are shown in Figure~\ref{fig:AntiABP}.

\begin{figure}[!ht]
 \centering
 \includegraphics[width=6cm,keepaspectratio=true]{./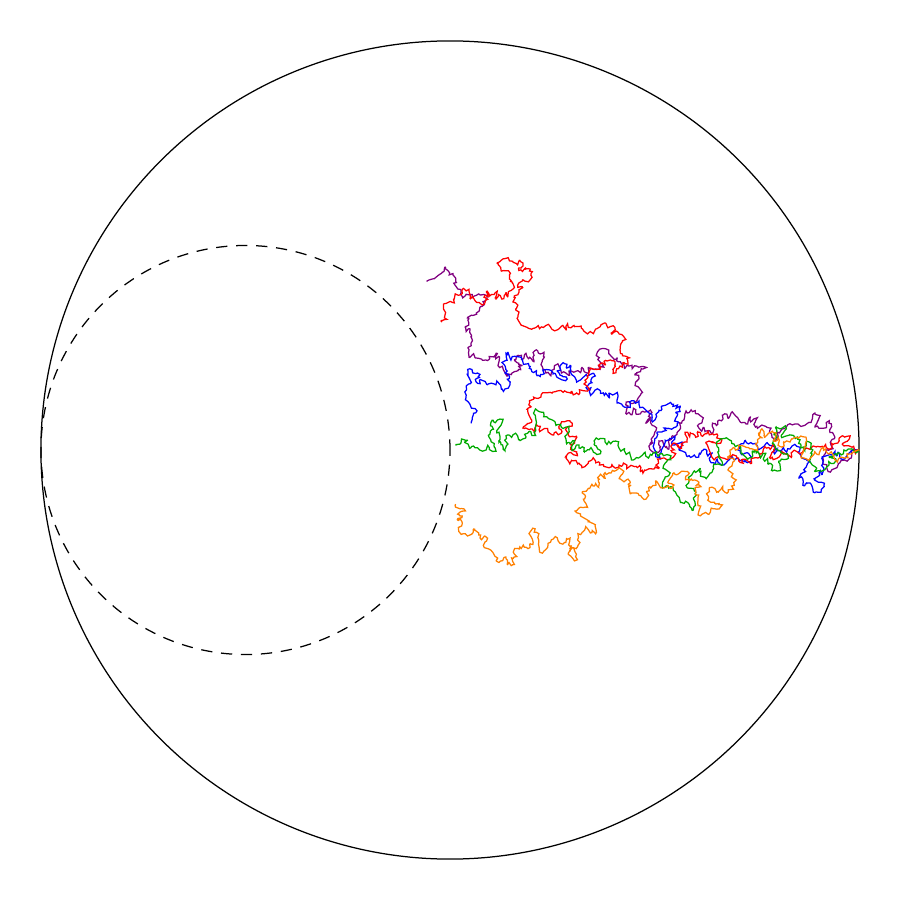}
 \caption{Curves generated by flow \eqref{eq:antiabpflow}.}
\label{fig:AntiABP}
\end{figure}

The hulls of this evolution always stay outside the disk $\{z: |z+1/2| <1/2 \}$ (if we work in $\mathbb{D}$) or inside the  strip $\{z: 0< \im z < 2\}$ (if we work in $\mathbb{H}$).

To see this in the case of $\mathbb{D}$, one can map the domain $\{z : |z|<1 \textrm{ and } |z + 1/2|>1/2\}$ onto $\mathbb{D}$ by a suitable conformal map $\phi$. Then we verify that the vector field $\phi_* \sigma$ is complete in $\mathbb{D},$ and the vector field $\phi_* b$ is semicomplete in $\mathbb{D}$, and, finally, apply Theorem~\ref{th:ourReverseKunita}.

\subsection{Equivalence and normalization of general slit Löwner chains}\label{subs:simplenorm}
A general slit L\"owner chain is determined by a triple $(b,\sigma, u_t)$. This correspondence, however, is not one-to-one. It may happen that different combinations of $b,$ $\sigma$ and $u_t$ produce the same L\"owner chain $\{g_t\}_{t\geq 0}$. It may  also happen that the resulting chains can be transformed one into another by means of a simple transformation, for instance, by a linear time reparameterization.

In this section we define precisely what we mean by a ``simple'', or \emph{elementary} transformation of a triple $(b,\sigma, u_t)$. If two slit Löwner chains are determined by triples that can be transformed one into the other by means of elementary transformations, we call such chains \emph{equivalent}.

In particular, we show that we can always find a representative in the equivalence class of triples 
so that the conditions
\begin{equation}
\label{eq:normc1}
 b_{-2} = 2,
\end{equation}
and
\begin{equation}
\label{eq:normc2}
 \sigma_{-1} = 1
\end{equation}
are satisfied. In other words we can always assume that that the vector fields $b$ and $\sigma$ have the form
\begin{equation}
\label{eq:bnormalize}
 b = 2\, \ell_{-2} + b_{-1} \ell_{-1} + b_0 \ell_0 + b_1\ell_1, \quad   b_{-1},\, b_0, \, b_1 \in \mathbb{R},
\end{equation}
 and
\begin{equation}
\label{eq:sigmanormalize}
 \sigma = \ell_1 + \sigma_0 \ell_0 + \sigma_1 \ell_1, \quad \sigma_0, \, \sigma_1 \in \mathbb{R},
\end{equation}
respectively. 

If the vector fields $b$ and $\sigma$ are of the form \eqref{eq:bnormalize} and \eqref{eq:sigmanormalize}, then we say that a general slit Löwner chain driven by $b$ and $\sigma$ is  a \emph{normalized slit Löwner chain}.

Below we list the transformations that we regard as elementary.

\subsubsection{Scaling of the driving function $V_c$.} Let $c\in \mathbb{R}\setminus\{0\}$. Then we define the transformation $V_c$ by the formula
\[
 V_c : (b, \sigma, u_t) \mapsto (b, c \, \sigma, c^{-1} \,u_t).
\]

The triple $ (b,\,\tilde{\sigma}, \, \tilde{u}_t)= (b,c\sigma, c^{-1} u_t)$ produces the same Löwner chain as the triple $(b,\sigma, u_t)$ . To see this, note that
the flow $\{\tilde{h}_t\}_{t\in \mathbb{R}}$ of $\tilde{\sigma}$ differs from the  flow of $\{h_t\}_{t \in \mathbb{R}}$ of $\sigma$ only by time reparameterization, $\tilde{h}_t = h_{c t},$ so that
\[
\tilde{h}^{-1}_{\tilde{u}_t*} b = {h^{-1}_{u_t}}_*\, b
\]


\subsubsection{Time scaling $T_c$} For a constant $c>0$ we can consider the time reparameterized chain $\{\tilde{g}_t\}_{t\geq 0},$ where $\tilde{g}_t = g_{ct},$ and note that $\{\tilde{g}_t\}_{t\geq 0}$ is generated by the triple $(c\,b,\sigma,   u_{ct})$. Indeed,
\[
 \frac{\partial}{\partial t} \tilde{g}_{t}(z) = -c \, V(ct, g_{ct}(z))  = -\left(h^{-1}_{u_{ct}*}\, c\, b \right)(\tilde{g}_t(z)).
\]

This motivates us to define the transformation $T_c$
\[
T_c : (b, \sigma, u_t) \mapsto  (c\,b, \sigma, u_{c\,t}), \quad c>0.
\]

\begin{remark*} The transformations $V_c$ and $T_c$ are sufficient for imposing the normalization conditions \eqref{eq:normc1} and \eqref{eq:normc2}.
Indeed, due to the fact that $\sigma_{-1} \neq 0,$ we can apply $V_{1/\sigma_{-1}}$ and transform $(b,\sigma, u_t) \mapsto (b,\frac{\sigma}{\sigma_{-1}},  \sigma_{-1} \, u_t),$ so that the vector field
\[
\frac{\sigma}{\sigma_{-1}} = \ell_{-1} + \frac{\sigma_0}{\sigma_{-1}} \,\ell_0 + \frac{\sigma_1}{\sigma_{-1}}  \,\ell_1
\]
has coefficient $1$ at $\ell_{-1}$. 

Then, since $b_{-2}>0,$ we apply $T_{2/b_{-2}}$, so that the triple  $(b,\frac{\sigma}{\sigma_{-1}},  \sigma_{-1} \, u_t)$ is further transformed into \[(\tilde{b}, \,\tilde{\sigma}, \,\tilde{u}_t) =  \left(\frac{2 b}{b_{-2}}, \,\frac{\sigma}{ \sigma_{-1}}, \,\sigma_{-1}\, u_{\frac{2 t}{b_{-2}}}\right),\]
and the vector field $\tilde{b}$ has coefficient $2$ at $\ell_{-2}$. 
\end{remark*}

\subsubsection{Drift $D_c$} We add a linear drift to the driving function $u_t$ and simultaneously modify the field $b$:
\[
 D_c: (b, \sigma, u_t) \mapsto (b + c \,\sigma, \sigma, u_t +  c\,t), \quad c\in \mathbb{R}.
\]
If $\{h_t\}_{t \in \mathbb{R}}$ is the flow of automorphisms generated by $\sigma$, then the new slit Löwner chain $\{\tilde{g}_t\}_{t\geq 0}$ can be expressed as $\tilde{g}_t = h_{-c \,t} \circ g_t$.

To show this, we use \eqref{eq:compflow} from Appendix~\ref{subs:compinvflow}:
\begin{dmath*}
\frac{\partial}{\partial t} \tilde{g}_t(z) = 
-c \, \sigma \,(\tilde{g}_t(z)) - \left({h_{-c \,t}}_{\, *} {h_{-u_t}}_{\, *} b \right)\,(\tilde{g}_t(z))
= - \left({h^{-1}_{u_t +c \,t}}_{\,*}\, (b + c\, \sigma) \right)\,(\tilde{g}_t(z)).
\end{dmath*}

We can apply $D_c$ with $c= \frac{\frac32\,b_{-2} \sigma_0 - b_{-1}}{\sigma_{-1}},$ so that the coefficients in the normalized fields $\tilde{b}$ and $\tilde{\sigma}$  satisfy  the following normalization condition
\begin{equation}
\label{eq:normc3}
-2\,\frac{b_{-1}}{b_{-2}}+3 \frac{\sigma_0}{\sigma_{-1}} = 0.  
\end{equation}

This particular choice is motivated by CFT considerations, which we do not deal with in this paper. 

\subsubsection{Parabolic rotation $R_c$} 
Let $\{r_c(z)\}_{c\in \mathbb{R}}$ be the solution to
\[
\begin{cases}
\frac{\partial}{\partial c}r_c(z) = \ell_1(r_c(z)), \\ 
r_0(z) = z, 
\end{cases}z\in D.
\]
In the case of $\mathbb{H},$ the explicit expression is given by  $r_c = \frac{z}{1 + c z}$. In $\mathbb{D}$, $r_t$ is a Möbius automorphism of $\mathbb{D}$ such that $r_c(1)=1$ and $r'_c(1)=1$.

The transformation $R_c$ is defined by
\[
 R_c: (b, \sigma, u_t) \mapsto ({r_c}_{\,*} b,{r_c}_{\,*} \sigma, u_t), \quad c\in \mathbb{R}.
\]

 The new slit Löwner chain $\{\tilde{g}_t\}_{t\geq 0}$ can be related to the original chain using conjugation with $r_c,$ i.e., $\tilde{g}_t = r_c \circ g_t \circ r^{-1}_c,$ $t\geq 0,$ as shown below
\begin{dmath*}
 \frac{\partial}{ \partial t} \tilde{g}_t(z) = \frac{\partial}{\partial t} \left(r_c \circ g_t \circ r_c^{-1} (z)\right)  = -\left({r_c}_{\,*} {h^{-1}_{u_t}}_{*} b\right)\,(\tilde{g}_t(z)) 
= -\left({r_c}_{\,*} {h^{-1}_{u_t}}_{*}  {r^{-1}_c}_{\,*} {r_c}_{\,*} b\right)\,(\tilde{g}_t(z))  =-\left( (r_c \circ h_{u_t} \circ r^{-1}_c)^{-1}_{*} {r_c}_{\,*} b\right)\,(\tilde{g}_t(z))
=  -\left({{\widetilde{h}}^{-1}_{u_t}} {}_{*} \,\tilde{b} \right)(\tilde{g}_t(z)),
\end{dmath*}
 where $\tilde{b} = {r_c}_{\,*} b,$ and the flow $\{\tilde{h}_t\}_{t\in \mathbb{R}}$ with $\tilde{h}_t = r_c \circ h_t  \circ r^{-1}_c,$ is generated by $\tilde{\sigma} = {r_c}_{\,*}\sigma$.

An easy way to see how the fields $b$ and $\sigma$ are transformed under the action of ${r_c}_{\,*}$ is to use the explicit expressions in $\mathbb{H}$ for $\ell_k,$ $k=-2,\ldots 1$ and $r_c$. Due to the linearity of the pushforward operation, the formulas we obtain remain valid for an arbitrary canonical domain $D$:

\begin{dmath*}
\tilde{b} = {r_c}_{\,*}(b_{-2} \ell_{-2} + b_{-1} \ell_{-1} + b_{0} \ell_0 + b_1 \ell_1)  = b_{-2} \ell_{-2} + (b_{-1} - 3 c b_{-2}) \,\ell_{-1} + (b_0 - 2 c \,b_{-1} + 3 c^2 b_{-2}) \,\ell_0 + (b_1 - c\,b_0 + c^2 
\,b_{-1} - c^3 b_{-2}) \,\ell_1,
\end{dmath*}

\begin{dmath*}
\tilde{\sigma} = {r_c}_{\,*}(\sigma_{-1} \ell_{-1} + \sigma_{0} \ell_0 + \sigma_1 \ell_1)   = \sigma_{-1} \,\ell_{-1} + (\sigma_0 - 2 c \,\sigma_{-1}) \,\ell_0 + (\sigma_1 - c\,\sigma_0 + c^2 
\,\sigma_{-1}) \,\ell_1. 
\end{dmath*}

Application of $R_c$ preserves the normalization conditions \eqref{eq:normc1}, \eqref{eq:normc2} and \eqref{eq:normc3}.

\subsubsection{Scaling $S_c$ and $S^{0}_c$} The transformation $S_c$ is analogous to $R_c,$ except for the fact that we use the flow  $\{s_c\}_{c\in \mathbb{R}}$ generated by $\ell_0$ in this case. 

The flow $\{s_c(z)\}_{c\in \mathbb{R}}$ is defined as the solution to
\[
\begin{cases}
\frac{\partial}{\partial c}s_c(z) =\ell_0(s_c(z)), \\ 
s_0(z) = z, 
\end{cases}z\in D.
\]
In the case of $\mathbb{H},$ $s_c$ is simply multiplication by the real constant $e^{-c}$, i.e., $s_c(z) = z \, e^{-c}$, hence the term ``scaling''. In the unit disk, $s_c$ is a Möbius automorphisms of $\mathbb{D}$ that leaves the points $\pm 1$ fixed.

The transformation $S_c$ is again defined by
\[
 S_c: (b, \sigma, u_t) \mapsto ({s_c}_{\,*} b,{s_c}_{\,*} \sigma, u_t).
\]
Similarly to the previous case, $\tilde{g}_t = s_c \circ g_t \circ s^{-1}_c,$ $t\geq 0$.

The coefficients of the vector fields $\sigma$ and $b$ transform as follows.
\begin{dmath*}
 s_{c\,*} (b_{-2} \ell_{-2} + b_{-1} \ell_{-1} + b_{0} \ell_0 + b_1 \ell_1) =  e^{-2c}\,b_{-2} \ell_{-1} + e^{-c}\,b_{-1} \ell_{-1} + b_{0} \ell_0 + e^{c}\,b_1 \ell_1,
\end{dmath*}
\[
 s_{c\,*}(\sigma_{-1} \ell_{-1} + \sigma_{0} \ell_0 + \sigma_1 \ell_1) =   e^{-c}\,\sigma_{-1} \ell_{-1} + \sigma_{0} \ell_0 + e^{c}\,\sigma_1 \ell_1.
\]

As we can see, when $S_c$ acts on $(b,\sigma, u_t),$ it does not necessarily  preserve the normalization conditions \eqref{eq:normc1}, \eqref{eq:normc2}, \eqref{eq:normc3}. To resolve this problem, we compose $S_c$ with $T_{e^{2c}}$ and $V_{e^c}$ and define the transformation
\[
 S^0_c = V_{e^c} \circ T_{e^{2c}} \circ S_c : (b, \sigma, u_t) \mapsto (e^{2c} \, {s_c}_{\,*}\, b, e^{c} \, {s_c}_{\,*}\, \sigma, e^{-c} \, u_{e^{2c}\,t}),
\]
which keeps the conditions unchanged.

\medskip

We summarize the properties of the transformations described above in Table~\ref{tab:transforms}.

\afterpage{
\begin{landscape}
 \vspace*{\fill}
\begin{table}
{%
\newcommand{\mc}[3]{\multicolumn{##1}{##2}{##3}}
\begin{center}
\begin{tabular}{l|l|lllll}\cline{2-7}
 & \mc{6}{c|}{Action of the transformation on}\\\cline{2-7}
 & $b$ & \mc{1}{l|}{$b_n$} & \mc{1}{l|}{$\sigma$} & \mc{1}{l|}{$ \sigma_n$} & \mc{1}{l|}{$ u_t$} & \mc{1}{l|}{$ g_t$}\\\hline
\mc{1}{|l|}{$V_c$} & $b \mapsto b$ & \mc{1}{l|}{$b_n\mapsto b_n$} & \mc{1}{l|}{$\sigma \mapsto c\,\sigma$} & \mc{1}{l|}{$\sigma_n\mapsto c\,\sigma_n$} & \mc{1}{l|}{$u_t\mapsto c^{-1} \, u_t$} & \mc{1}{l|}{$g_t\mapsto g_t$}\\\hline
\mc{1}{|l|}{$T_c$} & $b \mapsto c\, b$ & \mc{1}{l|}{$b_n\mapsto c\,b_n$} & \mc{1}{l|}{$\sigma \mapsto \sigma$} & \mc{1}{l|}{$\sigma_n\mapsto \sigma_n$} & \mc{1}{l|}{$u_t\mapsto u_{ct}$} & \mc{1}{l|}{$g_t\mapsto g_{ct}$}\\\hline
\mc{1}{|l|}{$D_c$} & $b\mapsto b+ c \sigma$ & \mc{1}{l|}{$\begin{aligned}b_{-2} &\mapsto b_{-2}\\ b_{-1} &\mapsto b_{-1}+ c \sigma_{-1}\\b_{0} &\mapsto b_{0}+ c \sigma_{0}\\b_{1} &\mapsto b_{1}+ c \sigma_{1}\end{aligned}$} & \mc{1}{l|}{$\sigma \mapsto \sigma$} & \mc{1}{l|}{$\sigma_n\mapsto\sigma_n$} & \mc{1}{l|}{$u_t\mapsto u_t+c\,t$} & \mc{1}{l|}{$g_t\mapsto h^{-1}_{ct} \circ g_t$}\\\hline
\mc{1}{|l|}{$R_c$} &
$b \mapsto {r_c}_* b$ 
 & \mc{1}{l|}{$\begin{aligned}
b_{-2}&\mapsto b_{-2} \\
b_{-1}&\mapsto b_{-1} -3c\,b_{-2} \\
b_{0}&\mapsto b_{0} -2cb_{-1} +3c^2b_{-2} \\
b_{1}&\mapsto b_{1} -c\,b_{0} +c^2b_{-1} -c^3 b_{-2}
 \end{aligned}
$} & \mc{1}{l|}{$\sigma \mapsto {r_c}_* \sigma$ } & \mc{1}{l|}{$\begin{aligned}
\sigma_{-1}&\mapsto\sigma_{-1}\\
\sigma_0&\mapsto\sigma_{0} -2c\sigma_{-1} \\
\sigma_1 &\mapsto \sigma_{1} -c\sigma_{0} +c^2\sigma_{-1} \\
 \end{aligned}$} & \mc{1}{l|}{$u_t\mapsto u_t$} & \mc{1}{l|}{$g_t\mapsto r_c \circ g_t \circ r^{-1}_c$}\\\hline
\mc{1}{|l|}{$S_c$} & 
$b\mapsto{s_c}_* b$   & \mc{1}{l|}{$b_n\mapsto e^{nc} b_n$} & \mc{1}{l|}{$\sigma \mapsto {s_c}_* \sigma$} & \mc{1}{l|}{$\sigma_n \mapsto e^{nc} \sigma_n$ } & \mc{1}{l|}{$u_t\mapsto u_t$} & \mc{1}{l|}{$g_t\mapsto s_c \circ g_t \circ s^{-1}_c$}\\\hline
\mc{1}{|l|}{$\begin{aligned}S^0_c&\\=V&_{e^c} T_{e^{2c}}  S_c \end{aligned}$} & $e^{2c} {s_c}_{*} b$ & \mc{1}{l|}{$\begin{aligned}b_{-2}&\mapsto b_{-2} \\
b_{-1}&\mapsto  e^{c} b_{-1} \\
b_{0}&\mapsto  e^{2c} b_{0} \\
b_{1} &\mapsto  e^{3s} b_{1}
\end{aligned}$} & \mc{1}{l|}{$\sigma \mapsto e^c {s_c}_* \sigma$} & \mc{1}{l|}{$\begin{aligned}
\sigma_{-1}&\mapsto   \sigma_{-1} \\
\sigma_{0}&\mapsto  e^{c} \sigma_{0} \\
\sigma_{1} &\mapsto  e^{2s} \sigma_{1}
\end{aligned}$} & \mc{1}{l|}{$u_t\mapsto e^{-c} u_{e^{2c} t}$} & \mc{1}{l|}{$g_t\mapsto s_c\circ g_{e^{2c} t} \circ s^{-1}_c$}\\\hline
\end{tabular}
\end{center}
} 
\caption{Elementary transformations of slit Löwner chains}
\label{tab:transforms}
\end{table}
\vspace*{\fill}
\end{landscape}
}

\subsubsection*{Stochastic case} Consider the slit Löwner chain generated by the triple $(b, \sigma, \sqrt{\kappa} B_t + \mu \,t),$ where $\kappa  > 0,$ $\mu \in \mathbb{R}$ and the fields $b$ and $\sigma$ are not necessarily normalized. The corresponding stochastic differential equation is 
\[
\begin{cases}
 dG_t(z) = -b(G_t(z)) \, dt + \sqrt{\kappa} \, \sigma(G_t(z))\circ d(B_t + \mu \, t),\\
G_0(z)= z, \quad z\in D.
\end{cases}
\]

After we apply the transformations $V_{1/\sigma_{-1}},$ $T_{2/b_{-2}}$ and $D_{\frac{\frac32 b_{-2} \,\sigma_0 -b_{-1}}{\sigma_{-1}}},$ the triple $(b, \sigma, \sqrt{\kappa} \, B_t + \mu t)$ is transformed into
\[
(\tilde{b}, \tilde{\sigma}, \sqrt{\tilde{\kappa}}\, \tilde{B}_t + \tilde{\mu} t)
\]
 with
\begin{align*}
\tilde{b} &= \frac{2}{b_{-2}} \, b + \frac{\frac32\, b_{-2} \,\sigma_0 -b_{-1}}{\sigma_{-1}} \,\sigma,\\ \tilde{\sigma} &=  \frac{\sigma}{\sigma_{-1}}, \\ \tilde{\kappa} &= \kappa\,\frac{2  \sigma^2_{-1}}{b_{-2}}, \\\tilde{B}_t &= \frac{1}{\sqrt{\frac{2}{b_{-2}}}}\, B_{\frac{2t}{b_{-t}}},\\
\tilde{\mu} &=\frac{2 \sigma_{-1}}{b_{-2}}\,\mu + \frac{\frac32 b_{-2} \,\sigma_0 - b_{-1}}{\sigma_{-1}}.
\end{align*}

We conclude that a slit Löwner chain driven by the multiple of a Brownian motion $\sqrt{\kappa}\,B_t$, after imposing the normalization conditions \eqref{eq:normc1}, \eqref{eq:normc2}, \eqref{eq:normc3} can  be considered as a  slit Löwner chain driven by the multiple of a Brownian motion $\sqrt{\tilde{\kappa}}\, \tilde{B}_t$ with the drift $\tilde{\mu} t$.

The new triple generates the slit holomorphic stochastic flow
\[
 \begin{cases}
  d\tilde{G_t}(z) = -\tilde{b}(\tilde{G}_t(z)) \,dt + \tilde{\sigma}(\tilde{G}_t(z)) \circ d\left(\sqrt{\tilde{\kappa}}\, \tilde{B}_t + \tilde{\mu}\, t\right),\\
\tilde{G}_0(z)= z, \quad z\in D.
 \end{cases}
\]

\subsubsection*{Analysis} 
Originally, the family of triples $(b, \sigma, u_t)$ determining general slit Löwner chains was parameterized by 7 real parameters (coefficients of $b$ and $\sigma$) and the driving function $u_t$. We have defined a 5-parameter family of elementary transformations: $V_c$, $T_c$, $D_c,$ $R_c$ and $S^0_c$. Using $V_c$, $T_c,$ and $D_c$ we have imposed normalization conditions \eqref{eq:normc1}, \eqref{eq:normc2}, \eqref{eq:normc3}  and eliminated three parameters. 

The transformations $R_c$ and $S^{0}_c$ preserve these normalization conditions. In principle, we can choose two more normalization conditions to eliminate two more parameters (we do not know if there is a canonical way to choose these conditions). 

In the end, we obtain a family of slit Löwner chains parameterized by two ($7-3-2$) real parameters and the continuous driving function $u_t$.  No two slit Löwner chains in this family are equivalent to each other, meaning that one chain cannot be transformed into another using a combination of the elementary transformations defined above.

Let us now repeat the same procedure in the stochastic case. The family of triples $(b, \sigma, \sqrt{\kappa} B_t + \mu t),$ $\kappa\geq 0, $ $\mu \in \mathbb{R}$ defines an 8-parameter family of slit holomorphic stochastic flows. After normalization, the driving function can be written as $\sqrt{\tilde{\kappa}} \tilde{B}_t + \tilde{\mu} t,$ and it is not true in general that $\tilde{\kappa} = \kappa,$ and $\tilde{\mu} =\mu$. In the end, we are left with a 4-parameter family of slit stochastic flows (two of them are the same as in the deterministic case, and the other two are the parameters $\tilde{\kappa}$ and $\tilde{\mu}$).

\section{Relations between different slit Löwner chains}
\label{sec:relations}

Throughout the section we use the normalization introduced in Section \ref{subs:simplenorm}, i.e.,
\begin{equation}
\label{eq:bnormalized}
 b = 2\, \ell_{-2} + b_{-1} \ell_{-1} + b_0 \ell_0 + b_1\ell_1, \quad   b_{-1},\, b_0, \, b_1 \in \mathbb{R},
\end{equation}
 and
\begin{equation}
\label{eq:sigmanormalized}
 \sigma = \ell_{-1} + \sigma_0 \ell_0 + \sigma_1 \ell_1, \quad \sigma_0, \, \sigma_1 \in \mathbb{R}.
\end{equation}

In most proofs we work with explicit unit disk expressions for such vector fields, i.e.,
\begin{dmath*}
 b^{\mathbb{D}}(z) = \alpha  - z \left( i \beta  + \frac{1+z}{1-z}\right) -\overline{\alpha} z^2 ,
\end{dmath*}
with $\alpha = \left(\frac{b_0}{2} -\frac14 + i \left(b_1-\frac{b_{-1}}{4}\right)\right)$, $\beta = \frac{b_{-1}}{2}+ 2\,b_1$, and
\begin{dmath*}
 \sigma^{\mathbb{D}}(z) = \left(\frac{\sigma_0}{2} + i \left(\sigma_1-\frac{1}{4}\right)\right) - i\, \left( 2\, \sigma_1 + \frac12 \right)\, z + \left(-\frac{\sigma_{0}}{2} + i \left(\sigma_1 - \frac{1}{4}\right)\right)\,z^2,
\end{dmath*}
so that $\sigma^{\mathbb{D}}(1) = - i$.

\subsection{Reducing a general slit Löwner chain to a radial chain}
Let $\{K_t\}_{t\geq 0}$ be the family of hulls generated by a slit Löwner chain. In the following theorem we explain how $\{K_t\}_{t\geq 0}$ can be described in terms of a radial Löwner chain.

\begin{theorem} \label{th:toradial}
 Let $\{g_t\}_{t\geq 0}$ be a normalized slit Löwner chain in $\mathbb{D}$ driven by $b$, $\sigma$ and $u_t$. Let $\{h_{t}\}_{t\in \mathbb{R}}$ be the flow of automorphisms generated by $\sigma$. Let $\{K_t\}_{t\geq 0}$ be the corresponding family of hulls, $K_t = \mathbb{D}\setminus g^{-1}_t(\mathbb{D})$,  and let $T_{\max{}}$ be the possibly infinite time
\[
 T_{\max{}} := \sup\{t: 0 \not\in K_t \}.
\]
Then 
\begin{enumerate}[label=\roman*.]
 \item there exists a radial Löwner chain $\{\tilde{g}_{\tilde{t}}\}_{\tilde{t} \geq 0}$  such that 
\[
 g^{-1}_t (\mathbb{D}) = \tilde{g}^{-1}_{\lambda(t)}(\mathbb{D}), \quad \textrm{for all } t\in[0,T_{\max{}});
\]
for a continuously differentiable monotonically increasing function $\lambda:[0,T_{\max{}}) \to \mathbb{R}$, $\lambda(0) =0;$ the Löwner chain $\{\tilde{g}_{\tilde{t}}\}_{\tilde{t} \geq 0}$ is defined uniquely on $[0, \tilde{T}_{\max{}})$, where $\tilde{T}_{\max{}} = \lim_{t\to T_{\max{}}} \lambda(t)$;
\item
the function $\lambda(t)$ satisfies
\begin{equation}
\label{eq:lambdaexpr}
 \dot{\lambda}(t) = \left(\frac{\left(M_t \circ h^{-1}_{u_t}\right)'(1)}{\left(M_t \circ h^{-1}_{u_t}\right)(1)}\right)^2, \quad t\in[0,T_{\max{}}),
\end{equation}
 and the radial driving function $\tilde{u}_{\tilde{t}}$ satisfies
\begin{equation}
\label{eq:utexpression}
 \tilde{u}_{\lambda(t)} = - i \log \left(M_t \circ h^{-1}_{u_t}\right) (1),\quad t\in[0,T_{\max{}}),
\end{equation}
where $M_t = \tilde{g}_{\lambda(t)} \circ g^{-1}_t$, and the branch of the logarithm is chosen so that $\tilde{u}_0 = 0$ and the value of $\tilde{u}_{\lambda(t)}$ changes continuously as $t$ increases from $0$ to $T_{\max{}}$.
 
\end{enumerate}

\end{theorem}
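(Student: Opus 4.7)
The plan is to construct $\tilde g_{\tilde t}$ as the radial normalization of $g_t$ and then identify the resulting family as a radial Löwner chain by analyzing its Herglotz vector field. For each $t \in [0, T_{\max{}})$ the origin lies in $D_t$, so there is a unique $M_t \in \mathrm{Aut}(\mathbb{D})$ making $F_t := M_t \circ g_t$ satisfy $F_t(0) = 0,\ F_t'(0) > 0$; explicitly, $M_t(w) = e^{-i\arg g_t'(0)}\,(w - g_t(0))/(1 - \overline{g_t(0)}\, w)$. Set $\lambda(t) := \log F_t'(0) = \log\bigl(|g_t'(0)|/(1 - |g_t(0)|^2)\bigr)$ and $\tilde g_{\lambda(t)} := F_t$. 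Since $g_t(0)$ and $g_t'(0)$ are $C^1$ in $t$ (differentiate \eqref{eq:gslitloewner} and its $z$-derivative at $z = 0$, using continuity of $V(\,\cdot\,, z)$), so are $M_t$ and $\lambda$. Strict monotonicity $\dot\lambda > 0$ follows from Schwarz--Pick applied to the strict inclusion $D_{t'} \subsetneq D_t$ for $t < t'$, the strictness being guaranteed by $b_{-2} \neq 0$, which prevents the hull from being stationary.

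Next I differentiate the identity $\tilde g_{\lambda(t)}(z) = M_t(g_t(z))$ in $t$ and use \eqref{eq:gslitloewner}. Writing $y = \tilde g_{\lambda(t)}(z)$, and denoting by $\tilde V(\tilde t, \cdot)$ the Herglotz vector field of the decreasing Löwner chain $\{\tilde g_{\tilde t}\}$ (which exists by Theorem~\ref{thm:gumenyukDuality}), the chain rule $\partial_t \tilde g_{\lambda(t)} = -\dot\lambda(t)\,\tilde V(\lambda(t), \tilde g_{\lambda(t)})$ combined with the expression for $\partial_t g_t$ yields
\begin{equation*}
 \dot\lambda(t)\,\tilde V(\lambda(t), y) = (M_t)_{*} V(t, \cdot)(y) - \bigl[(\partial_t M_t) \circ M_t^{-1}\bigr](y).
\end{equation*}
The second summand is complete: it is the generator at $s = 0$ of $s \mapsto M_{t+s} \circ M_t^{-1} \in \mathrm{Aut}(\mathbb{D})$. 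The first summand is, by Proposition~\ref{prop:slittransformation} applied twice (first to $V = (h_{u_t}^{-1})_{*} b$, then to its pushforward by $M_t$), a slit vector field whose boundary singularity lies at $(M_t \circ h_{u_t}^{-1})(1)$ and whose $\gamma$-coefficient equals $|(M_t \circ h_{u_t}^{-1})'(1)|^2$ (recalling that $b_{-2} = 2$ means $\gamma = 1$ for $b$). Hence $\dot\lambda(t)\,\tilde V(\lambda(t), \cdot)$ is itself a slit field with the same pole location and $\gamma$-coefficient.

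The normalization $\tilde g_{\tilde t}(0) = 0$ and $\tilde g'_{\tilde t}(0) = e^{\tilde t}$ impose $\tilde V(\tilde t, 0) = 0$ and $\partial_y \tilde V(\tilde t, 0) = -1$; combined with the slit form of Proposition~\ref{prop:slitFields} these force $\tilde V(\tilde t, y) = -y\,(e^{i\tilde u_{\tilde t}} + y)/(e^{i\tilde u_{\tilde t}} - y)$, so $\tilde g_{\tilde t}$ does satisfy the radial Löwner equation. Matching the pole gives $(M_t \circ h_{u_t}^{-1})(1) = e^{i\tilde u_{\lambda(t)}}$, which is \eqref{eq:utexpression}, and matching the $\gamma$-coefficient gives $\dot\lambda(t) = |(M_t \circ h_{u_t}^{-1})'(1)|^2$; using the identity $M'(1)/M(1) = |M'(1)|$ valid for any $M \in \mathrm{Aut}(\mathbb{D})$ at the boundary point $1$, this is precisely \eqref{eq:lambdaexpr}. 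Uniqueness of $\tilde g_{\tilde t}$ on $[0, \tilde T_{\max{}})$ follows because at each fixed time the radial normalization picks out a unique representative. The main obstacle will be the clean separation of $\dot\lambda\,\tilde V$ into its slit and complete parts: this rests on the pushforward of a slit field by a disk automorphism remaining a slit field with explicit transformation of coefficients (Proposition~\ref{prop:slittransformation}) and on $t \mapsto M_t$ being a smooth curve in the three-dimensional Lie group $\mathrm{Aut}(\mathbb{D})$, so that its logarithmic derivative lies in the Lie algebra of complete fields spanned by $\ell_{-1},\ell_0,\ell_1$.
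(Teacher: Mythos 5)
Your proposal is correct and follows essentially the same route as the paper: the same Möbius renormalization $M_t$ and reparameterization $\lambda(t)=\log\bigl(|g_t'(0)|/(1-|g_t(0)|^2)\bigr)$, the same chain-rule decomposition of the velocity of $M_t\circ g_t$ into the complete field $(\partial_t M_t)\circ M_t^{-1}$ plus the pushforward slit field $(M_t\circ h^{-1}_{u_t})_*\,b$, and Proposition~\ref{prop:slittransformation} to locate the boundary pole, with your matching of the $\gamma$-coefficient being exactly the paper's residue computation and yielding \eqref{eq:utexpression} and \eqref{eq:lambdaexpr}. The only cosmetic difference is that you pin down the radial form of the field from the normalization $\tilde V(\tilde t,0)=0$, $\tilde V'(\tilde t,0)=-1$ rather than citing the classical radial Löwner--Kufarev representation as the paper does, which is an equivalent and equally valid step.
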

\begin{proof}

Let $D_t = g^{-1}_t(\mathbb{D})$.  The function
\[
 \lambda(t) := \log \frac{|g'_t(0)|}{1-|g_t(0)|^2},
\]
is well-defined for $t \in [0,T_{\max{}}),$ is monotonically increasing, continuously differentiable, and is such that $\lambda(0) = 0$. We set $\tilde{T}_{\max{}} := \lim_{t\to T_{\max{}}} \lambda(t)$. The inverse function $\lambda^{-1}(\tilde{t})$ is well-defined for $\tilde{t} \in [0, \tilde{T}_{\max{}})$.

Let
\[
M_t (z):= \frac{|g'_{t}(0)|}{g'_{t}(0)} \, \frac{z - g_{t}(0)}{1-\overline{g_{t}(0)} \, z},\quad t \in [0,T_{\max{}}),
\]
and let
\begin{dmath*}
 \tilde{g}_{\tilde{t}} (z) {:=} M_{\lambda^{-1}(\tilde{t})} \circ g_{\lambda^{-1}(\tilde{t})}(z) =\frac{|g'_{\lambda^{-1}(\tilde{t})}(0)|}{g'_{\lambda^{-1}(\tilde{t})}(0)} \, \frac{g_{\lambda^{-1}(\tilde{t})}(z) - g_{\lambda^{-1}(\tilde{t})}(0)}{1-\overline{g_{\lambda^{-1}(\tilde{t})}(0)} \, g_{\lambda^{-1}(\tilde{t})}(z)},\quad {\tilde{t} \in [0,\tilde{T}_{\max{}})}.
\end{dmath*}
Then $\tilde{g}_{\tilde{t}}$ maps $D_{\lambda^{-1}(\tilde{t})}$ conformally onto $\mathbb{D},$ 
so that
\[
 g^{-1}_t (\mathbb{D}) = \tilde{g}^{-1}_{\lambda(t)}(\mathbb{D}), \quad \textrm{for all } t\in[0,T_{\max{}}).
\]
Moreover, 
\[
 \tilde{g}'_{\tilde{t}}(0) = \frac{|g'_{\lambda^{-1}(\tilde{t})}(0)|}{1-|g_{\lambda^{-1}(\tilde{t})}(0)|^2} = e^{\lambda (\lambda^{-1}(\tilde{t}))} = e^{\tilde{t}},
\]
which implies that the family $\{\tilde{g}_{\tilde{t}}\}_{\tilde{t}\in [0,\,\tilde{T}_{\max})}$ satisfies the radial Löwner-Kufarev equation
\begin{equation}
\label{eq:radrecalc}
 \begin{cases}
  \frac{\partial}{\partial \tilde{t}} \tilde{g}_{\tilde{t}}(z) = \tilde{g}_{\tilde{t}}(z) \, p (\tilde{t}, \tilde{g}_{\tilde{t}}(z)),\\
  \tilde{g}_0(z) = z,
 \end{cases}\\ \tilde{t}\in [0, \tilde{T}_{\max{}}), \, z\in \mathbb{D},
\end{equation}
where $p(\tilde{t},\cdot)$ is analytic, $p(\tilde{t},z) = 1 + c_1(\tilde{t}) \, z+\ldots,$ $\re p(z) >0$ in $\mathbb{D}$, and $p(\cdot, z)$ is measurable (see \cite{Pommerenke2}).

Let $W(t,z)$ denote the velocity field of the family $\{M_t\}_{t\in[0,T_{\max{}})}$, so that
\[
 \begin{cases}
  \frac{\partial}{\partial t} M_t (z) = W(t, M_t(z)),\\
  M_0(z) = z,
 \end{cases} z \in \mathbb{D}, \, t\in[0, T_{\max{}}),
\]
and let $V(t,z)$ denote the velocity field of $\tilde{g}_{\lambda(t)}$ with respect to $t$ multiplied by $-1$, so that
\[
 \begin{cases}
  \frac{\partial }{\partial t} \tilde{g}_{\lambda(t)}(z) = - V(t,\tilde{g}_{\lambda(t)}(z)),\\
 \tilde{g}_{\lambda(0)} = z,
 \end{cases} z\in \mathbb{D}, \, t\in [0, T_{\max{}}).
\]

On the one hand, it follows from \eqref{eq:radrecalc} that
\begin{equation}
\label{eq:vsimpexpr1}
 V(t,z) =  -\dot{\lambda}(t) \, z \,p(\lambda(t),z).
\end{equation}
On the other hand, by our definition, $\tilde{g}_{\lambda(t)} =M_{t} \circ g_t ,$  $t\in [0, T_{\max{}}).$ Since $g_t$ satisfies
\[
\begin{cases}
 \frac{\partial}{\partial t} g_t(z) = -({h^{-1}_{u_t}}_*\, b)(g_t(z)),\\
  g_0(z) = z,
\end{cases} z\in \mathbb{D}, t\geq 0, 
\]
where
\[
 b(z) = \alpha - z \,\left(i\beta + \frac{1+z}{1-z}\right) - \overline{\alpha}\, z^2, \quad \alpha \in \mathbb{C}, \quad \beta \in \mathbb{R},
\]
the chain rule implies
\begin{dmath}
 \label{eq:vsimpexpr2}
V(t,z) = - W(t,z) - {(M_t \circ {h^{-1}_{u_t}})_*}\, b \,(z) = W(t,z) + {(M_t \circ {h^{-1}_{u_t}})_*}\, \left(\alpha - z \,\left(i\beta + \frac{1+z}{1-z}\right) - \overline{\alpha}\, z^2\right) 
= -W(t,z) - {(M_t \circ {h^{-1}_{u_t}})_*}\, \left(\alpha - i \beta z - \overline{\alpha}\, z^2\right) - \frac{h_{u_t} \circ M^{-1}_t(z)}{(h_{u_t} \circ M^{-1}_t)'(z)} \,\frac{1+h_{u_t} \circ M^{-1}_t(z)}{1-h_{u_t} \circ M^{-1}_t(z)}.
\end{dmath}
The expressions \eqref{eq:vsimpexpr1} and \eqref{eq:vsimpexpr2} must be identically equal.

By Proposition~\ref{prop:slittransformation}, the vector field in \eqref{eq:vsimpexpr2} is of the form \eqref{eq:oftheform}: it has a simple pole at $M_t\circ h^{-1}_{u_t}\,(1)$ and is tangent on the rest of the unit circle for each $t\geq 0$. This implies that the function $p(\tilde{t}, z)$ in \eqref{eq:vsimpexpr1} must be of the form
\[
 p(\tilde{t}, z) = \frac{e^{i\tilde{u}_{\tilde{t}}}+z}{e^{i\tilde{u}_{\tilde{t}}}-z},
\]
where
\[
 e^{i \tilde{u}_{\lambda(t)}} = M_t \circ h^{-1}_{u_t}(1), 
\]
(compare with Proposition \ref{prop:slitFields}).

The residues at $e^{i \tilde{u}_{\lambda(t)}}$ in \eqref{eq:vsimpexpr1} and \eqref{eq:vsimpexpr2} must coincide. From \eqref{eq:vsimpexpr1},
\[
 \res_{z = e^{i \tilde{u}_{\lambda(t)}}} V(t,z) =  2 \,\dot{\lambda}(t)  e^{2 i \tilde{u}_{\lambda(t)}} =  2 \,\dot{\lambda}(t)  \left(M_t \circ h^{-1}_{u_t}(1)\right)^2,
\]
On the other hand, from \eqref{eq:vsimpexpr2},
\begin{dmath}
 \res_{z = e^{i \tilde{u}_{\lambda(t)}}} V(t,z) =  2 \, \frac{1}{\left((h_{u_t} \circ M^{-1}_t)'(e^{i \tilde{u}_{\lambda(t)}})\right)^2}
= 2 \, \frac{1}{\left((h_{u_t} \circ M^{-1}_t)'( (M_t \circ h^{-1}_{u_t})(1))\right)^2} 
= 2 \,( (M_t \circ h^{-1}_{u_t})'(1))^2,
\end{dmath}
so that
\[
 \dot{\lambda}(t) = \left(\frac{\left(M_t \circ h^{-1}_{u_t}\right)'(1)}{\left(M_t \circ h^{-1}_{u_t}\right)(1)}\right)^2.
\]

Thus, $\{\tilde{g}_{\tilde{t}}\}_{\tilde{t} \in [0, \tilde{T}_\max)}$ is a radial Löwner chain driven by the continuous function $\tilde{u}_{\tilde{t}}$. 
\end{proof}

\begin{remark*}
Let $z^0 \in \mathbb{D}$ and let $\phi$ be a conformal automorphism of $\mathbb{D}$ such that $\phi(1) = 1, $ and $\phi(0) = z^0$. Then the vector fields $b^0 = \phi_* ( - z\,\frac{1 + z}{1-z})$ and $\sigma^{0} = \phi_* (- i z)$ correspond to a version of the radial evolution for which the solutions leave the point $z^0$ fixed.

We can generalize Theorem \ref{th:toradial} by showing that for $t\in [0, T^0_{\max{}}),$ with $T^0_{\max{}} := \sup\{t: z^0 \not\in K_t \},$ there exists a radial Löwner chain leaving $z^0$ fixed, that describes the same evolution of hulls as the original slit Löwner chain. The formulas \eqref{eq:lambdaexpr} and \eqref{eq:utexpression} become slightly more complicated in this case.
\end{remark*}

\subsection{Correspondence between two general chains}

Let $\{K_t\}_{t\geq 0}$ be the family of hulls generated by a slit Löwner chain driven by $b,$ $\sigma$ and $u_t$. Given another pair of vector fields, $\tilde{b}$ and $\tilde{\sigma}$, can one find a suitable driving function $\tilde{u}_{\tilde{t}}$, so that the corresponding slit Löwner chain generates the same family of hulls? The following theorem, which is a generalization of Theorem \ref{th:toradial}, states that this is possible, at least for small values of $t.$

The construction is implicit and relies on existence theorems for solutions to systems of ordinary differential equations.

\begin{theorem}
\label{th:main}
 Let $\{g_t\}_{t\geq 0}$ be a normalized slit L\"owner chain in a simply connected hyperbolic domain $D$ driven by $b,$ $\sigma$ and $u_t$. For any other pair of vector fields $(\tilde{b}, \tilde{\sigma}),$ where $\tilde{b}$ and $\tilde{\sigma}$ are normalized as in \eqref{eq:bnormalized}, \eqref{eq:sigmanormalized},
and for some $T_{\max{}}>0,$ there exist
\begin{itemize}
 \item a unique $C^1$-differentiable time reparameterization $\lambda :[0,\,T_{\max{}}) \to [0, \,\tilde{T}_{\max{}}),$ and
\item a unique continuous driving function $\tilde{u}_{\tilde{t}}:[0,\,\tilde{T}_{\max{}}) \to \mathbb{R}$,
\end{itemize}
such that the slit L\"owner chain  $\{\tilde{g}_{\tilde{t}}\}_{\tilde{t} \in[0,\,\tilde{T}_{\max{}})}$ driven by $\tilde{b},$ $\tilde{\sigma}$ and $\tilde{u}_{\tilde{t}}$, describes the same evolution of domains as $\{g_t\}_{t\in [0,\,T_{\max{}})},$ that is,
\[
 \tilde{g}^{-1}_{\lambda(t)} (D) = g^{-1}_t(D), \quad \textrm{ for all } t\in [0,\,T_{\max{}}).
\]

Moreover, $\dot{\lambda}(t) >0$ for all $\tinrange$.
\end{theorem}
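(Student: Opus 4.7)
The strategy generalizes the proof of Theorem~\ref{th:toradial}. Any candidate chain $\{\tilde g_{\tilde t}\}$ producing the same hulls as $\{g_t\}$ under the reparameterization $\tilde t = \lambda(t)$ must factor as $\tilde g_{\lambda(t)} = M_t \circ g_t$ for some $M_t \in \mathrm{Aut}(\mathbb{D})$, since both sides are conformal isomorphisms of $g_t^{-1}(\mathbb{D}) = \tilde g_{\lambda(t)}^{-1}(\mathbb{D})$ onto $\mathbb{D}$. The plan is to simultaneously determine the triple $(M_t, \lambda(t), \tilde u_{\lambda(t)})$ from a coupled system of equations, then recover $\tilde g_{\tilde t}$ by composition.

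Differentiating $\tilde g_{\lambda(t)} = M_t \circ g_t$ in $t$ and using the slit Löwner equation~\eqref{eq:gslitloewner} for both $g_t$ and $\tilde g_{\tilde t}$, the chain rule yields the vector-field identity
\[
 W_t(w) \;=\; (M_{t\,*}\,V(t,\cdot))(w) \;-\; \dot\lambda(t)\,\tilde V(\lambda(t),w), \qquad w\in\mathbb{D},
\]
with $V(t,z) = \frac{1}{h'_{u_t}(z)}\,b(h_{u_t}(z))$, $\tilde V(\tilde t,z) = \frac{1}{\tilde h'_{\tilde u_{\tilde t}}(z)}\,\tilde b(\tilde h_{\tilde u_{\tilde t}}(z))$, and $W_t$ the velocity field of the curve $t\mapsto M_t$. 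Since $M_t\in \mathrm{Aut}(\mathbb{D})$ for every $t$, its generator $W_t$ must be a complete holomorphic vector field, i.e., $W_t\in \mathrm{span}_{\mathbb{R}}\{\ell_{-1},\ell_0,\ell_1\}$.

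By Proposition~\ref{prop:slitFields}, both $M_{t\,*}\,V(t,\cdot)$ and $\tilde V(\lambda(t),\cdot)$ are slit holomorphic vector fields (a disk automorphism pushes a slit field to a slit field), with unique boundary poles at $M_t(h_{-u_t}(1))$ and $\tilde h_{-\tilde u_{\lambda(t)}}(1)$ respectively. For their difference to be complete, the singular parts must cancel; by Proposition~\ref{prop:slittransformation} together with the normalizations $b_{-2}=\tilde b_{-2}=2$, this is equivalent to the algebraic relations
\[
 \tilde h_{-\tilde u_{\lambda(t)}}(1) \;=\; M_t(h_{-u_t}(1)), \qquad \dot\lambda(t) \;=\; |M_t'(h_{-u_t}(1))|^2.
\]
The second relation immediately gives $\dot\lambda(t)>0$, since the derivative of a disk automorphism cannot vanish on $\partial\mathbb{D}$. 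The first relation determines $\tilde u_{\lambda(t)}$ smoothly in terms of $M_t$ and $u_t$: the normalization $\tilde\sigma_{-1}=1$ forces $\tilde\sigma(1)\neq 0$, so the map $s\mapsto \tilde h_{-s}(1)$ is a local diffeomorphism of $\mathbb{R}$ onto an arc of $\partial\mathbb{D}$ near $s=0$.

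Expressing $\dot\lambda$ and $\tilde u_{\lambda(t)}$ algebraically in terms of $M_t$ and $u_t$ reduces the compatibility identity to a closed formula for $W_t$ as a complete vector field depending smoothly on $M_t$ and continuously on $t$. The ODE $\frac{\partial}{\partial t} M_t(z) = W_t(M_t(z))$ with initial condition $M_0 = \mathrm{id}$ is then a three-dimensional non-autonomous system on $\mathrm{Aut}(\mathbb{D})$ with locally Lipschitz right-hand side, and the Picard-Lindelöf theorem supplies a unique $C^1$ solution on a maximal interval $[0,T_{\max{}})$; integration of the positive function $\dot\lambda$ yields the unique strictly increasing $C^1$ reparameterization $\lambda$, and the first algebraic relation delivers the continuous driving function $\tilde u_{\tilde t}$. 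A direct substitution back into the compatibility identity then verifies that $\tilde g_{\tilde t} := M_{\lambda^{-1}(\tilde t)}\circ g_{\lambda^{-1}(\tilde t)}$ solves the slit Löwner equation driven by $(\tilde b,\tilde\sigma,\tilde u_{\tilde t})$. The main technical hurdle is the bookkeeping in the singular cancellation step: verifying via the explicit pushforward formulas of Proposition~\ref{prop:slittransformation} that after algebraic elimination of $(\dot\lambda,\tilde u_{\lambda(t)})$, the resulting ODE for $M_t$ has a right-hand side regular enough to invoke standard ODE theory.
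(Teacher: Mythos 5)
Your architecture is essentially the paper's: factor any candidate chain as $\tilde g_{\lambda(t)} = M_t\circ g_t$ with $M_t\in\mathrm{Aut}(\mathbb{D})$, note that the velocity field $W_t$ of $t\mapsto M_t$ must be complete, force the singular parts of the two pushed-forward slit fields to cancel so as to extract two algebraic relations determining $\tilde u_{\lambda(t)}$ and $\dot\lambda(t)$, and then reduce to a three-dimensional ODE for $M_t$ (the paper writes it for the Möbius parameters $(A_t,\Theta_t)$) solved by Picard--Lindelöf; uniqueness of the reduced ODE gives uniqueness of the triple. The only presentational difference is that the paper recovers $\tilde u$ through the explicitly computed function $T^{\tilde\sigma}$ of Appendix~\ref{app:tsigma}, whereas you invert $s\mapsto \tilde h_{-s}(1)$ locally near $s=0$; for a short-time statement with $\tilde u_0=0$ that is an acceptable substitute.

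However, your second matching relation is wrong as stated. With $b_{-2}=\tilde b_{-2}=2$, Proposition~\ref{prop:slittransformation} applied to the composed pushforwards gives that the singular ($\gamma$-) coefficient of $M_{t\,*}\bigl({h^{-1}_{u_t}}_*\,b\bigr)$ is $\bigl|\bigl(M_t\circ h^{-1}_{u_t}\bigr)'(1)\bigr|^2$, not $|M_t'(h_{-u_t}(1))|^2$, while that of $\dot\lambda(t)\,\tilde h^{-1}_{\tilde u_{\lambda(t)}*}\tilde b$ is $\dot\lambda(t)\,\bigl|\bigl(\tilde h^{-1}_{\tilde u_{\lambda(t)}}\bigr)'(1)\bigr|^2$. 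The correct cancellation condition is therefore $\dot\lambda(t)\,\bigl|\bigl(\tilde h^{-1}_{\tilde u_{\lambda(t)}}\bigr)'(1)\bigr|^2=\bigl|\bigl(M_t\circ h^{-1}_{u_t}\bigr)'(1)\bigr|^2$, i.e.\ $\dot\lambda(t)=\bigl(\bigl(\tilde h_{\tilde u_{\lambda(t)}}\circ M_t\circ h^{-1}_{u_t}\bigr)'(1)\bigr)^2$, the squared derivative at the fixed boundary point $1$ of a Möbius automorphism, which is still a positive real number, so your conclusion $\dot\lambda>0$ survives. Your formula silently assumes $\bigl|\bigl(h^{-1}_{u_t}\bigr)'(1)\bigr|=\bigl|\bigl(\tilde h^{-1}_{\tilde u_{\lambda(t)}}\bigr)'(1)\bigr|=1$; this holds for rotation-type $\sigma$ (the radial normalization, compare Theorem~\ref{th:toradial}, where the extra factor is the unimodular $(M_t\circ h^{-1}_{u_t})(1)$), but fails in general: for the normalized $\sigma=\ell_{-1}$ transported to $\mathbb{D}$ one computes $|h_t'(1)|=4/(t^2+4)$. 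Taken literally, your $\dot\lambda$ does not make the singular parts cancel, so $W_t$ is not complete and $M_t$ leaves $\mathrm{Aut}(\mathbb{D})$, breaking the verification step; the repair is local (use the correct $\gamma$-transformation rule when eliminating $\dot\lambda$ and $\tilde u$) and leaves the structure of your argument, and its agreement with the paper's proof, intact.
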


\begin{proof}
Without loss of generality, we assume $D = \mathbb{D}.$

We use the following notation in this proof. The coefficients of the semicomplete fields $b(z)$ and $\tilde{b}(z)$ are denoted by $\alpha,$ $\beta$ and $\tilde{\alpha},$ $\tilde{\beta},$  respectively, so that
\[
 b(z) = \alpha - z \,\left(i\beta +  \frac{1+z}{1-z}\right) - \overline{\alpha}\, z^2, \quad \alpha \in \mathbb{C}, \quad \beta \in \mathbb{R},
\]
and
\[
 \tilde{b}(z) = \tilde{\alpha} - z \,\left(i\tilde{\beta} +  \frac{1+z}{1-z}\right) - \overline{\tilde{\alpha}}\, z^2, \quad \tilde{\alpha} \in \mathbb{C}, \quad \tilde{\beta} \in \mathbb{R}.
\]
The flows of disk automorphisms corresponding to the fields $\sigma(z)$ and $\tilde{\sigma}(z)$ are denoted by $h_t$ and $\tilde{h}_t,$ and the parameters of these M\"obius transformations by $\theta_t,$ $a_t,$ $\tilde{\theta}_t,$ $\tilde{a}_t,$ so that
\[
 h_t = e^{i \theta_t} \,\frac{z-a_t}{1 - \overline{a}_t z}, \quad \tilde{h}_t = e^{i \tilde{\theta}_t} \,\frac{z-\tilde{a}_t}{1 - \overline{\tilde{a}}_t z}, \quad \theta_t, \tilde{\theta}_t \in \mathbb{R},\quad a_t, \tilde{a}_t \in \mathbb{D},
\]
and the inverse maps are given by
\[
 h^{-1}_t = e^{-i \theta_t} \,\frac{z+a_t e^{i\theta_t}}{1 + \overline{a_t e^{i\theta_t}} z}, \quad  \tilde{h}^{-1}_t = e^{-i \tilde{\theta}_t} \,\frac{z+\tilde{a}_t e^{i\tilde{\theta}_t}}{1 + \overline{\tilde{a}_t e^{i\tilde{\theta}_t}} z}.
\]

In this proof we say that a family of functions $\{M_t\}_{t\in[0,T_{\max{}})}$ is a \emph{$t$-differentiable automorphic family}, if $\{M_t\}_{t\in[0,T_{\max{}})} \subset \mathrm{Aut}(\mathbb{D})$ and for every fixed $z_0\in \mathbb{D},$ $M_t(z_0) \in C^1[0,T_{\max{}}).$

We organize the proof as a sequence of three claims.

\bigskip

\textbf{Claim 1}. Let $\lambda \in C^1[0,T_{\max{}})$ and $\tilde{u}_{\tilde{t}}\in C^0[0,\tilde{T}_{\max{}})$. The pair $(\lambda(t),\tilde{u}_{\tilde{t}})$  possesses the required properties if and only if there exists a $t$-differentiable automorphic family $\{M_t\}_{t\in [0,T_{\max{}})}$ such that
\begin{equation}
\begin{cases}
\label{eq:Wequation}
 \frac{\partial}{\partial t} M_t(z) = V(t, M_t(z)), \quad t \in [0,T_{\max{}}),\\
 M_0(z) = z,
\end{cases} 
\end{equation}
where
\[
 V(t,z):=  -\dot{\lambda}(t)\cdot \left( \tilde{h}^{-1}_{\tilde{u}_{\lambda(t)}*} \tilde{b}\right)(z) + \left(M_{t*} {h^{-1}_{u_t}}_*\, b\right) (z).
\]

\bigskip

\textbf{Claim 2}. Let $\lambda\in C^1[0,T_{\max{}})$, $\tilde{u}_{\tilde{t}}\in C^0[0,\tilde{T}_{\max{}})$, and $\{M_t\}_{t\in[0,T_{\max{}})}$ be a $t$-differentiable automorphic family. The triple $(\lambda(t), \tilde{u}_{\tilde{t}}, \{M_t\}_{t\in[0,T_{\max{}})})$ satisfies (\ref{eq:Wequation}) if and only if 
\begin{itemize}
 \item $\{M_t\}_{t\in [0,T_{\max{}})}$ is a $t$-differentiable automorphic family satisfying
\begin{equation}
\label{eq:replacedEquation}
 \begin{cases}
 \frac{\partial}{\partial t} M_t(z) = \tilde{V}(t, M_t(z)), \\
 M_0(z) = z,
\end{cases}  t\in [0,T_{\max{}}), \quad z\in \mathbb{D},
\end{equation}
where
\begin{dmath*}
\tilde{V}(t,z) {:=} - \left(\left(\tilde{h}_{-T^{\tilde{\sigma}} \circ M_t \circ h^{-1}_{u_t} (1)}\circ M_t \circ h^{-1}_{u_t}\right)'(1) \right)^2 \cdot  \left(\tilde{h}^{-1}_{-T^{\tilde{\sigma}} \circ M_t \circ h^{-1}_{u_t}\,(1)}{}_* \tilde{b} \right)(z) + \left({M_t}_{*} h^{-1}_{u_t}{}_* b\right)(z)
\end{dmath*}
where $T^{\tilde{\sigma}}$ is the function defined in (\ref{eq:tdefinition}), Appendix~\ref{app:tsigma}.

\item  $\lambda(t)$ is given by

\begin{equation}
\label{eq:ttildedef}
 \dot{\lambda}(t) = \left(\left(\tilde{h}_{-T^{\tilde{\sigma}} \circ M_t \circ h^{-1}_{u_t} (1)}\circ M_t \circ h^{-1}_{u_t}\right)'(1) \right)^2 \quad \textrm{for all }t\in [0,T_{\max{}}),
\end{equation}
 \item$\tilde{u}_{\tilde{t}}$  is defined from the equation 
\[
 \tilde{h}_{\utilde} \circ M_t \circ h^{-1}_{u_t} \,(1) = 1,
\]
and hence is given by
\[
\utilde = - T^{\tilde{\sigma}}  \circ M_t \circ h^{-1}_{u_t}(1)  \quad \textrm{ for }t\in [0,T_{\max{}}).
\]
\end{itemize}

\bigskip

\textbf{Claim 3.} The initial-value problem (\ref{eq:replacedEquation}) has a unique $t$-differentiable automorphic solution $\{M_t\}_{t\in[0,T_{\max{}})},$ for some $T_{\max{}} >0$.

\bigskip

Once these claims are verified, the statement of the theorem follows immediately. Indeed, since the solution to (\ref{eq:replacedEquation}) exists and is unique, there exists a unique triple $(\lambda(t), \tilde{u}_{\tilde{t}}, \{M_t\}_{t\in[0,T_{\max{}})})$ satisfying (\ref{eq:Wequation}), and hence, there exists a unique time reparameterization $\lambda(t)$ and a a driving function $\tilde{u}_{\tilde{t}}$ describing the same evolution of domains as $\{g_t\}_{t\in [0,T_{\max{}})}$, but for the coefficients $\tilde{b}$ and $\tilde{\sigma}$. The inequality $\dot{\lambda}(t) >0$ follows directly from (\ref{eq:ttildedef}) because the derivatives of M\"obius automorphisms $M_t \circ h^{-1}_{u_t}$ and $\tilde{h}^{-1'}_{\utilde}$ remain non-zero.

\bigskip

\textbf{Proof of Claim 1}. Suppose there exist such $\lambda(t)\in C^1[0,T_{\max{}})$ and $\tilde{u}_{\tilde{t}}\in C^{0}[0,\tilde{T}_{\max{}}),$ so that $\tilde{g}^{-1}_{\lambda(t)} (\mathbb{D}) = g^{-1}_t(\mathbb{D})$ for all $t\in [0,\,T_{\max{}})$. Define
\[
M_t(z) := \tilde{g}_{\lambda(t)} (g^{-1}_t(z)), \quad  t\in [0,\,T_{\max{}}),
\]
and note that $\autfam$ is a $t$-differentiable automorphic family. Applying the chain rule, we see that $\autfam$ satisfies (\ref{eq:Wequation}).

In other words, if such functions $\lambda(t)$ and $\tilde{u}_{\tilde{t}}$ exist, then there exists at least one family $\{M_t\}_{t\in[0,T_{\max{}})}\subset\mathrm{Aut}(\mathbb{D})$ satisfying (\ref{eq:Wequation}).

Conversely, suppose for given $\lambda(t)$ and $\tilde{u}_{\tilde{t}}$ the problem (\ref{eq:Wequation}) has a $t$-differentiable automorphic solution $\autfam$. Then the family of functions $\{\tilde{g}_{\tilde{t}}\}_{\tilde{t} \in [0,\,\tilde{T}_{\max{}})}$ defined by
\[
 \tilde{g}_{\lambda(t)} = M_t(g_t(z)), 
\]
is a $(\tilde{b}, \tilde{\sigma})$-flow driven by $\tilde{u}_{\tilde{t}}$ describing the same evolution of domains as $\{g_t\}_\tinrange$.

\bigskip

\textbf{Proof of Claim 2.} Let $(\lambda(t), \tilde{u}_{\tilde{t}},\autfam)$ be a triple satisfying (\ref{eq:Wequation}). Consider the time-dependent vector field
\begin{equation}
\label{eq:Vdef}
V(t,z) = -\dot{\lambda}(t)\cdot \left( \tilde{h}^{-1}_{\tilde{u}_{\lambda(t)}*} \tilde{b}\right)(z) + \left(M_{t*} {h^{-1}_{u_t}}_*\, b\right) (z),
\end{equation}
and the related initial-value problem
\begin{equation}
\label{eq:Vequation}
\begin{cases}
 \frac{\partial}{\partial t} \phi_t(z) =V(t, \phi_t(z)), \\
\phi_0(z) = z,
\end{cases}\quad t\in [0,T_{\max{}}), \, z\in \mathbb{D}. 
\end{equation}

Trivially, the family $\autfam$ also satisfies (\ref{eq:Vequation}), and hence, $V(t,z)$ is the velocity field of an automorphic evolution family. Thus, $V(t,z)$ is a complete field for every fixed $t\in[0,T_{\max{}})$. In particular, it can be represented in the form
\[
 V(t,z) = \alpha_t - i \beta_t\, z -  \overline{\alpha_t}\, z^2, \quad \alpha_t\in \mathbb{C}, \, \beta_t \in \mathbb{R}, \, t\in[0,T_{\max{}}).
\]

Write
\[
 M_t(z) = e^{i\Theta_t} \frac{z - A_t}{1 - \overline{A_t} \, z}.
\]

We use Proposition~\ref{prop:slittransformation} to rewrite the first summand in the right-hand side of (\ref{eq:Vdef}) in the form
\[
 -\dot{\lambda}(t)\cdot \left( \tilde{h}^{-1}_{\tilde{u}_{\lambda(t)}*} \tilde{b}\right)(z) = -\tilde{\alpha}^{\star}_t + z \left( i \tilde{\beta}^{\star}_t + \tilde{\gamma}^{\star}_t \, \frac{\tilde{h}^{-1}_{\tilde{u}_{\lambda(t)}}(1)+z}{\tilde{h}^{-1}_{\tilde{u}_{\lambda(t)}}(1)-z}\right) + \overline{\tilde{\alpha}^{\star}_t} z^2,
\]
where
\begin{dmath}
\label{eq:alphatildeprime}
\tilde{\alpha}^{\star}_t  = \dot{\lambda}(t)\, \frac{e^{-i \tilde{\theta}_{\tilde{u}_{\lambda(t)}}}}{1-|\tilde{a}_{\tilde{u}_{\lambda(t)}}|^2} \,\tilde{b} \left(-\tilde{a}_{\tilde{u}_{\lambda(t)}} e^{i\tilde{\theta}_{\tilde{u}_{\lambda(t)}}}\right), 
\end{dmath}
\begin{multline}
\label{eq:betatildeprime}
 \tilde{\beta}^{\star}_t = \dot{\lambda}(t) \frac{1}{1 -\left|\atilde\right|^2} \Bigg{(}-4 \im \left(\atilde \etilde \overline{\tilde{\alpha}}\right) + {\tilde{\beta} \left(1 + |\atilde|^2\right)}
 \\- \left.4  \im \left(\atilde \etilde \right) \cdot\left(\frac{1 + |\atilde|^2}{|\atilde \etilde + 1|^4} \re \left( \atilde \etilde\right)
 + \frac{1 + |\atilde|^4}{|\atilde \etilde +1|^4}\right) \right),
\end{multline}
 
\[
\tilde{\gamma}^{\star}_t = \dot{\lambda}(t)\,\left| {\tilde{h}^{-1'}_{\tilde{u}_{\lambda(t)}}}(1)\right|^2.
\]

In a similar way, we can rewrite the second summand in (\ref{eq:Vdef}). First, note that

\[
 M_t (h^{-1}_{u_t}(z)) = e^{i (\Theta_t - \theta_{u_t})} \frac{z -\hutat }{1 - z\, \overline{\hutat}}.
\]
Then,
\[
 \left(M_{t*} {h^{-1}_{u_t}}_*\, b\right) (z) = \alpha^{\star}_t  - z\, \left(i\beta^{\star}_t + \gamma^{\star}_t \,\frac{M_t \circ h^{-1}_{u_t}(1) + z}{M_t\circ h^{-1}_{u_t}(1)  - z}\right) -\overline{\alpha^{\star}_t} \,z^2,
\]
where
\begin{equation}
\label{eq:alphaprime}
 \alpha^{\star}_t = \frac{e^{i (\Theta_t - \theta_{u_t})}}{1- \left|h_{u_t} (A_t)\right|^2} \,b(h_{u_t} (A_t)),
\end{equation}

\begin{dmath}
\label{eq:betaprime}
 \beta^{\star}_t = \frac{1}{1 - \left|\hutat\right|^2} \,\left( 4 \im(\hutat \, \overline{\alpha}) + {\beta \, \left(1 + |\hutat|^2\right)} 
  + 4 \,\frac{\im \hutat}{\left|\hutat-1\right|^4} \left(1 + |\hutat|^4 - \left(1 + \left|\hutat\right|^2\right)\, \re \hutat \right)\right), 
\end{dmath}

\[
\gamma^{\star}_t = \left|\left(M_t \circ h^{-1}_{u_t}\right)'(1) \right|^2.
\]

Thus, for the sum  (\ref{eq:Vdef}) to be a complete vector field (in particular, a polynomial of degree 2) for each $t \in[0,T_{\max{}}),$ it is necessary and sufficient that the fractions cancel out,
\[
 \tilde{\gamma}^{\star}_t \, \frac{\tilde{h}^{-1}_{\tilde{u}_{\lambda(t)}}(1)+z}{\tilde{h}^{-1}_{\tilde{u}_{\lambda(t)}}(1)-z} \equiv \gamma^{\star}_t \,\frac{M_t \circ h^{-1}_{u_t}(1) + z}{M_t\circ h^{-1}_{u_t}(1)  - z} \quad \textrm{for all }t\geq 0 \textrm{ and all }z\in \mathbb{D},
\]
which is possible if and only if
\begin{equation}
\label{eq:firstofthese}
 \tilde{h}^{-1}_{\tilde{u}_{\lambda(t)}}(1) = M_t \circ h^{-1}_{u_t}(1) \quad \textrm{ for all }t\in [0,T_{\max{}}),
\end{equation}
and
\begin{equation}
\label{eq:secondofthese}
  \dot{\lambda}(t)\,\left| {\tilde{h}^{-1'}_{\tilde{u}_{\lambda(t)}}}(1)\right|^2 =  \left|\left(M_t \circ h^{-1}_{u_t}\right)'(1) \right|^2  \quad \textrm{ for all }t\in [0,T_{\max{}}).
\end{equation}

Equation (\ref{eq:firstofthese}) implies that
\[
\utilde = - T^{\tilde{\sigma}} \circ M_t \circ h^{-1}_{u_t}(1)  \quad \textrm{ for }t\in [0,T_{\max{}}),
\]
and then (\ref{eq:secondofthese}) leads to
\[
  \dot{\lambda}(t) = \left|\left(\tilde{h}_{-T^{\tilde{\sigma}} \circ M_t \circ h^{-1}_{u_t} (1)}\circ M_t \circ h^{-1}_{u_t}\right)'(1) \right|^2 \quad \textrm{for all }t\in [0,T_{\max{}}).
\]
Note that $1$ is a fixed boundary point of the M\"obius transformation $\tilde{h}_{-T^{\tilde{\sigma}} \circ M_t \circ h^{-1}_{u_t} (1)}\circ M_t \circ h^{-1}_{u_t},$ hence the derivative $\left(\tilde{h}_{-T^{\tilde{\sigma}} \circ M_t \circ h^{-1}_{u_t} (1)}\circ M_t \circ h^{-1}_{u_t}\right)'(1)$ is a positive real number, and in fact we do not need the absolute value sign:

\[
  \dot{\lambda}(t) = \left(\left(\tilde{h}_{-T^{\tilde{\sigma}} \circ M_t \circ h^{-1}_{u_t} (1)}\circ M_t \circ h^{-1}_{u_t}\right)'(1) \right)^2 \quad \textrm{for all }t\in [0,T_{\max{}}).
\]

Proving sufficiency is trivial.

\bigskip

\textbf{Proof of Claim 3.} Calculations in the proof of Claim 2 show that for a $t$-differentiable automorphic family $\autfam,$
with
\[
 M_t(z) = e^{i\Theta_t} \frac{z - A_t}{1 - \overline{A_t} \, z},
\]
the vector field 
\begin{dmath*}
V(t,z) = -\dot{\lambda}(t)\cdot \left( \tilde{h}^{-1}_{\tilde{u}_{\tilde{t}}*} \tilde{b}\right)(z) + \left(M_{t*} {h^{-1}_{u_t}}_*\, b\right) (z),
\end{dmath*}
with 
$\tilde{u}_{\tilde{t}}= - T^{\tilde{\sigma}}  (M_t(h^{-1}_{u_t}(1)))$ and $\dot{\lambda}(t) = \left(\left(\tilde{h}_{-T^{\tilde{\sigma}} \circ M_t \circ h^{-1}_{u_t} (1)}\circ M_t \circ h^{-1}_{u_t}\right)'(1) \right)^2$, is complete for each $\tinrange$, and may be written as
\begin{equation}
\label{eq:fundeq1}
V(t,z) = \alpha^{\star\star}_t - i\beta^{\star\star}_t z - \overline{\alpha^{\star\star}_t} z^2,\quad \tinrange, 
\end{equation}
where
\[
 \alpha^{\star\star}_t = - \tilde{\alpha}^{\star}_t + \alpha^{\star}_t,
\]
\[
 \beta^{\star\star}_t = - \tilde{\beta}^{\star}_t + \beta^{\star}_t 
\]
with  $\alpha^{\star}_t$,  $\beta^{\star}_t$ are defined in \eqref{eq:alphaprime} and \eqref{eq:betaprime}, and

\begin{dmath*}
  \tilde{\alpha}^{\star}_t= \frac{ \left|\left(M_t \circ h^{-1}_{u_t}\right)'(1) \right|^2 }{\left| {\tilde{h}^{-1'}_{\newutilde}}(1)\right|^2} \cdot \frac{e^{-i \tilde{\theta}_{\newutilde}}}{1-|\tilde{a}_{\newutilde}|^2} \,\tilde{b} \left(-\tilde{a}_{\newutilde} e^{i\tilde{\theta}_{\newutilde}}\right),
\end{dmath*}

\begin{dmath*}
 \tilde{\beta}^{\star}_t 
=  \frac{ \left|\left(M_t \circ h^{-1}_{u_t}\right)'(1) \right|^2 }{\left| {\tilde{h}^{-1'}_{\newutilde}}(1)\right|^2}\cdot \frac{1}{1 -\left|\newatilde\right|^2} \Bigg{(}-4 \im \left(\newatilde \newetilde \overline{\tilde{\alpha}}\right) \\ + \tilde{\beta} \left(1 + |\newatilde|^2\right)
 \left.-4 \im \bigg{(}\newatilde \newetilde \right) \times 
\\ \times\Bigg{(}\frac{1 + |\newatilde|^2}{|\newatilde \newetilde + 1|^4} \re \left( \newatilde \newetilde\right) 
\\+ \frac{1 + |\newatilde|^4}{|\newatilde \newetilde +1|^4}\Bigg{)} \Bigg{)}.
\end{dmath*}

On the other hand, $V(t,z)$ is the velocity field of the flow $\{M_t\}_{\tinrange}$. Indeed, if we set $\phi_t  = M_t,$ then 
\[
 \begin{cases}
  \frac{\partial}{\partial t} \phi_t(z) = V(t,\phi(z)),\\
  \phi_0(z) = z,
 \end{cases}\quad \tinrange, z \in \mathbb{D},
\]
is satisfied. Therefore, according to \eqref{eq:coeffrecalc}, $V(t,z)$ can be written as
\begin{dmath}
\label{eq:fundeq2}
 V(t,z) =- \frac{e^{i\Theta_{t}} \dot{A}_{t}}{1 - |A_{t}|^2} + i \left( \frac{2 \im A_{t} \dot{\bar{A}}_{t}}{1-|A_{t}|^2} + \dot{\Theta}_{t}\right) \,z + \frac{e^{-i\Theta_{t}} \dot{\bar{A}}_{t}}{1-|A_{t}|^2}\, z^2.
\end{dmath}

By equating the coefficients at $z$ in \eqref{eq:fundeq1} and \eqref{eq:fundeq2}, we arrive at the following initial value problem for a system of ordinary differential equations of first order for $\Theta_t$ and $A_t$

\begin{equation}
\label{eq:finalCauchy}
 \begin{cases}
  \dot{A}_t = - e^{-i \Theta_t} \,(1-|A_t|^2) \,\alpha^{\star\star}_t,\\
 \dot{\Theta}_t = \beta^{\star\star}_t +2 \im \left(e^{-i \Theta_t} A_t \,\alpha^{\star\star}_t\right),\\
A_0 = 0, \quad\Theta_0=0.
 \end{cases}
\end{equation}

The right-hand side is analytic with respect to $(A_t, \Theta_t)$ in a neighborhood  of $(0,0),$ and is continuous with respect to $t$. Therefore, there exists a unique solution  to the initial value problem in some interval $\tinrange$.
\end{proof}

\subsection{Uniqueness of parameterization}
The following theorem establishes the uniqueness of the driving function describing a given family of hulls for given vector fields $b$ and $\sigma$.

\begin{theorem}
\label{th:uniquenessparam}
 Let $\{g_t\}_{t\geq 0}$ and $\{\tilde{g}_{\tilde{t}}\}_{\tilde{t}\geq 0}$ be two general slit L\"owner chains in a simply connected hyperbolic domain $D$ that are driven by $b$, $\sigma$, $u_t$ and $b,$ $\sigma,$ $\tilde{u}_{\tilde{t}},$ respectively.  Let $\{g_t\}_{t\geq 0}$ and $\{\tilde{g}_{\tilde{t}}\}_{\tilde{t}\geq 0}$ describe the same evolution of domains, i.e., for some continuous monotone function $\lambda(t),$ 
\[
 \tilde{g}^{-1}_{\lambda(t)}(D) = g^{-1}_t (D), \quad \textrm{ for } t\in [0, +\infty).
\]
 Then $\lambda(t) \equiv t$, $\utilde \equiv u_t$ and $g_t \equiv \tilde{g}_{\lambda(t)}$ for $t \in [0,+\infty)$.
\end{theorem}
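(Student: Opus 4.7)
The plan is to reduce the statement to the local uniqueness built into the proof of Theorem~\ref{th:main} applied in the special case $\tilde{b} = b$, $\tilde{\sigma} = \sigma$, and then propagate the local conclusion to all of $[0,+\infty)$ by a standard clopen argument.

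First I would show that $\lambda$ is automatically of class $C^1$ with $\dot{\lambda}(t)>0$, so that Theorem~\ref{th:main} applies. Applying Theorem~\ref{th:toradial} to each of the two chains produces a $C^1$ monotone reparameterization $\mu(t) := \log\bigl(|g_t'(0)|/(1-|g_t(0)|^2)\bigr)$ and an analogous $\tilde{\mu}(\tilde{t})$, each tied to the \emph{same} radial Löwner chain (since both slit chains describe the same family of hulls). Hence $\mu(t) = \tilde{\mu}(\lambda(t))$, and $\lambda = \tilde{\mu}^{-1}\circ \mu$ is $C^1$ with positive derivative on $\{t:0\notin K_t\}$; a parallel argument using a different interior point or a prime end (cf.\ the remark following Theorem~\ref{th:toradial}) handles the complementary set. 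Then $M_t := \tilde{g}_{\lambda(t)}\circ g_t^{-1}$ is for each $t$ a conformal isomorphism $D\to D$, hence an element of $\mathrm{Aut}(D)$, and $\{M_t\}_{t\geq 0}$ is a $t$-differentiable automorphic family.

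Next, applying the chain rule to $\tilde{g}_{\lambda(t)} = M_t\circ g_t$ and using \eqref{eq:gslitloewner} for both chains yields precisely equation \eqref{eq:Wequation} from the proof of Theorem~\ref{th:main} with $\tilde{b}=b$ and $\tilde{\sigma}=b$. In this special case the triple $(\lambda(t),\,\tilde{u}_{\tilde{t}},\,M_t) = (t,\,u_{\tilde{t}},\,\id_D)$ manifestly satisfies the system: the two summands in the definition of $V(t,z)$ in Theorem~\ref{th:main} become negatives of one another, so $\partial_t M_t \equiv 0$. The Cauchy problem \eqref{eq:finalCauchy} has a unique local solution by Theorem~\ref{th:main}, whence on some interval $[0,\varepsilon)$ we must have $M_t = \id_D$, $\lambda(t) = t$, and $\tilde{u}_{\lambda(t)} = u_t$.

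To globalize, consider
\[
E := \{t\geq 0 : M_s = \id_D \text{ for all } 0\leq s\leq t\}.
\]
The set $E$ is non-empty by the previous step, and closed in $[0,+\infty)$ by continuity of $s\mapsto M_s$. It is open: if $t_0\in E$, then $g_{t_0} = \tilde{g}_{\lambda(t_0)}$, and restarting both general slit Löwner equations at time $t_0$ (with shifted driving functions $u_{t_0+\cdot}$ and $\tilde{u}_{\lambda(t_0)+\cdot}$) we reapply the local argument to conclude $M_t = \id_D$ in a right-neighborhood of $t_0$. Hence $E = [0,+\infty)$, which gives $g_t\equiv \tilde{g}_{\lambda(t)}$, $\lambda(t)\equiv t$, and $\tilde{u}_{\lambda(t)}\equiv u_t$ on $[0,+\infty)$.

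The principal obstacle in this plan is the regularity of $\lambda$: the hypothesis supplies only continuity and monotonicity, whereas the ODE machinery of Theorem~\ref{th:main} requires $\lambda\in C^1$ with strictly positive derivative. Once the radial reduction of Theorem~\ref{th:toradial} is invoked to upgrade this regularity, and once one checks that the continuation argument does not stall at points where the hulls swallow the chosen reference point (handled by switching to another reference point or to a prime end as indicated above), the remainder of the proof is a direct invocation of the uniqueness clause of Theorem~\ref{th:main}.
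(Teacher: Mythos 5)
Your proposal is correct and follows essentially the same route as the paper's own proof: first upgrade the regularity of $\lambda$ by passing to the (unique) radial chain via Theorem~\ref{th:toradial} and its remark (using, for each $T$, an interior reference point not yet swallowed), then rerun the argument of Theorem~\ref{th:main} with $\tilde{b}=b$, $\tilde{\sigma}=\sigma$ and conclude from uniqueness for the Cauchy problem \eqref{eq:finalCauchy} that $M_t\equiv \id_D$, $\lambda(t)\equiv t$, $\tilde{u}_{\lambda(t)}\equiv u_t$. The only cosmetic differences are that the paper propagates the conclusion on all of $[0,T]$ directly at the ODE level (so your restart-and-clopen step, which has the small wrinkle that the restarted driving function no longer vanishes at time zero, is not needed), and that the remark after Theorem~\ref{th:toradial} covers interior reference points only, not prime ends.
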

\begin{proof}
Without loss of generality, we assume $D = \mathbb{D}.$

Let us first show that the function $\lambda$ is necessarily continuously differentiable on $[0,+\infty)$.

For a $T >0,$ choose a point $z^0 \in \cap_{t \in [0,T]} g^{-1}_t(\mathbb{D})$. Let $(b^0, \sigma^0)$ be the vector fields corresponding to the radial L\"owner evolution that fixes $z^0$. By the remark after  Theorem~\ref{th:toradial}, we have two different continuously differentiable $(b^{0}, \sigma^0)$-reparameterizations of the evolution: one, which we denote by $\lambda_1(t)$, coming from $g_t,$ and the other, $\lambda_2(\tilde{t})$, coming from $\tilde{g}_{\tilde{t}}$. However, according to Theorem \ref{th:toradial}, the parameterization of a radial L\"owner chain is uniquely determined, that is,
\[
 \lambda_1(t) = \lambda_2(\lambda(t)), \quad t \in [0, T].
\]
By Theorem~\ref{th:toradial}, $\dot{\lambda}_t(t)>0$ for all $t\geq \in [0, T]$ therefore, the inverse function $\lambda^{-1}_2$ exists and is continuously differentiable, and $\tilde{t}$ may be expressed as
\[
\tilde{t}= \lambda(t) = \lambda_2^{-1}(\lambda_1(t)),
\]
hence, $\lambda(t)$ is continuously differentiable on $[0,T]$ for any $T >0.$

Now, we can simply repeat the proof of Theorem~\ref{th:main} step-by-step, taking into account that in this case $b = \tilde{b}, $ and $\sigma  = \tilde{\sigma}$. When we finally arrive at the analogue of the initial value problem (\ref{eq:finalCauchy}), we notice that the right-hand side is analytic in $(A_t,\Theta_t)$ for all $(A_t, \Theta_t) \in \mathbb{D} \times \mathbb{R}.$ Since the dependence on $t$ is continuous, the right-hand side satisfies the Lipschitz condition in a neighborhood of $(0,0),$ for $t\in [0, T],$ where $T$ is an arbitrary positive number.  Therefore, if a solution to the initial value problem exists on $[0,T]$, it is unique there. It is easy to see that the unique solution is given by $A_t\equiv 0, \Theta_t \equiv 0,$  hence $\lambda(t) \equiv t,$ $\tilde{u}_{\lambda(t)} \equiv u_t$, $t\in[0, +\infty)$.
\end{proof}

We say that a function $\phi$ is \emph{embeddable} into a slit Löwner chain driven by $b$ and $\sigma$ if there exists a continuous function $u_t,$ $u_0=0,$ such that for the chain $\{g_t\}_{t\geq 0}$ driven by $b,$ $\sigma$ and $u_t,$ $\phi=g_{t_0}$ for some $t_0\geq 0$.
\begin{corollary}
\label{cor:uniqueness}
Let $\gamma:[0,T] \to \mathbb{D}$ be a simple curve, such that $\gamma(0,T]\subset\mathbb{D},$ $\gamma(0) = 1$. For a given pair of vector fields $b$ and $\sigma$ there exists at most one function $g_{t_0}$ embeddable into a slit Löwner  chain $\{g_t\}_{t\geq 0}$ driven by $b$ and $\sigma$, such that $g^{-1}_{t_0}(\mathbb{D}) = \mathbb{D}\setminus \gamma$.
\end{corollary}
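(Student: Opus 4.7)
The plan is to reduce the uniqueness claim to Theorem~\ref{th:uniquenessparam} via a natural reparameterization. Suppose that two such functions exist: let $\{g_t\}_{t\geq 0}$ and $\{\tilde{g}_{\tilde{t}}\}_{\tilde{t}\geq 0}$ be slit Löwner chains driven by $(b,\sigma,u_t)$ and $(b,\sigma,\tilde{u}_{\tilde{t}})$ respectively, with $g_{t_0}^{-1}(\mathbb{D}) = \tilde{g}_{\tilde{t}_0}^{-1}(\mathbb{D}) = \mathbb{D}\setminus\gamma$ for some $t_0,\tilde{t}_0\geq 0$. I want to conclude $g_{t_0} = \tilde{g}_{\tilde{t}_0}$.

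The key preliminary step is to show that for each $t\in[0,t_0]$ the hull $K_t := \mathbb{D}\setminus g_t^{-1}(\mathbb{D})$ is an initial segment $\gamma((0,s(t)])$ of the curve, for some continuous monotonically increasing function $s\colon[0,t_0]\to[0,T]$ with $s(0)=0$ and $s(t_0)=T$. Monotonicity of hulls gives $K_t\subset K_{t_0}=\gamma((0,T])$, so $K_t=\gamma(E_t)$ for some closed $E_t\subset(0,T]$. The evolution domain $g_t^{-1}(\mathbb{D})$ is simply connected by Theorem~\ref{thm:gumenyukDuality}, and this forces $E_t$ to be an initial segment: any other closed subset of the simple arc $\gamma((0,T])$ would either be an interior sub-arc (whose complement in $\mathbb{D}$ is doubly connected) or a disjoint union of sub-arcs (whose complement fails to be simply connected). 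Continuity and monotonicity of $s(t)$ follow from continuous dependence of the conformal maps $g_t^{-1}$ on $t$ in the Carathéodory sense, together with the fact that the pole $h_{u_t}^{-1}(1)\in\partial\mathbb{D}$ of the Herglotz field $V(t,z)$ moves continuously.

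The same argument applied to $\{\tilde{g}_{\tilde{t}}\}$ produces a continuous monotone bijection $\tilde{s}\colon[0,\tilde{t}_0]\to[0,T]$, so that $\lambda(t):=\tilde{s}^{-1}(s(t))$ defines a continuous monotonically increasing reparameterization $\lambda\colon[0,t_0]\to[0,\tilde{t}_0]$ with $\lambda(0)=0$, $\lambda(t_0)=\tilde{t}_0$, and
\[
\tilde{g}_{\lambda(t)}^{-1}(\mathbb{D}) = g_t^{-1}(\mathbb{D}), \quad t\in[0,t_0].
\]
Thus the two chains describe the same evolution of domains up to the reparameterization $\lambda$. Applying Theorem~\ref{th:uniquenessparam} (whose proof, being based on a Cauchy problem with analytic, hence locally Lipschitz, right-hand side, is valid verbatim on the finite interval $[0,t_0]$) yields $\lambda(t)\equiv t$, $\tilde{u}_{\lambda(t)}\equiv u_t$, and $g_t \equiv \tilde{g}_{\lambda(t)}$. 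In particular $t_0=\tilde{t}_0$ and $g_{t_0}=\tilde{g}_{\tilde{t}_0}$, which is the required uniqueness.

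The main obstacle is the topological step that $K_t$ must be an initial segment of $\gamma$. A priori a hull could conceivably consist of several sub-arcs of the curve or of a Cantor-type subset; ruling this out requires the combination of (i) simple connectedness of $g_t^{-1}(\mathbb{D})$, (ii) the arc structure of $\gamma$, and (iii) the anchoring of the growth at the boundary point $1=\gamma(0)$, coming from the location of the singularity of $V(t,z)$ at $t=0$. Once this is in hand, Theorem~\ref{th:uniquenessparam} does the rest essentially mechanically.
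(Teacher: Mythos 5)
Your proposal is correct and takes essentially the route the paper intends: the corollary is presented as an immediate consequence of Theorem~\ref{th:uniquenessparam}, and your topological step (simple connectedness of the evolution domains forces each hull $K_t$ to be an initial segment of $\gamma$, yielding a common continuous monotone reparameterization $\lambda$) is precisely the bridging argument the paper leaves implicit. The only detail worth adding is that strict monotonicity of $s$ and $\tilde{s}$, needed to invert $\tilde{s}$, can be read off from Theorem~\ref{th:toradial}, since $\dot{\lambda}(t)>0$ there shows the hulls grow strictly in time.
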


\section{Geometry of the hulls and the domain Markov property}\label{sec:geometry}
In this section we investigate geometric properties of  hulls generated by slit Löwner chains and show that the geometry in the general case is closely related to the cases of classical Löwner equations and $SLEs$. In particular, we show that for sufficiently regular driving functions the hulls are quasislit curves. We also show that for the driving function $u_t = \sqrt{\kappa}\,B_t$ the hulls are generated by curves, which justifies the terms \emph{slit Löwner chain} and \emph{slit holomorphic stochastic flow}.

\subsection{Deterministic case}

Theorem~\ref{th:toradial} states that a normalized slit Löwner chain $\{g_t\}_{t\geq 0}$, driven by $b,$ $\sigma$ and $u_t$ can be related to a radial Löwner chain up to time $T_{\max{}}.$  The radial driving function $\utildenot$ can be expressed in terms of the original function $u_t$ by the formula

\begin{equation}
\label{eq:utildexpr}
 \tilde{u}_{\tilde{t}} = - i \log \left(M_{\lambda^{-1}(\tilde{t})} \circ h^{-1}_{u_{\lambda^{-1}(\tilde{t})}}\right) (1),\quad t\in[0,\tilde{T}_{\max{}}),
\end{equation}
where $M_t$ is defined as
 \[
M_t (z):= \frac{|g'_{t}(0)|}{g'_{t}(0)} \, \frac{z - g_{t}(0)}{1-\overline{g_{t}(0)} \, z},\quad t \in [0,T_{\max{}}),
\]
and the radial time reparameterization $\tilde{t} =\lambda(t)$ satisfies
\[
 \dot{\lambda}(t) = \left(\frac{\left(M_t \circ h^{-1}_{u_t}\right)'(1)}{\left(M_t \circ h^{-1}_{u_t}\right)(1)}\right)^2, \quad t\in[0,T_{\max{}}).
\]

Regarding formally $u_{\lambda^{-1}(\tilde{t})}$ as an independent variable, we may write
\[
 \frac{\partial \tilde{u}_{\tilde{t}}}{\partial \tilde{t}} =  -i\,e^{-i\tilde{u}_{\tilde{t}}}\,\dot{\lambda}^{-1}(\tilde{t})\,W(\lambda^{-1}(\tilde{t}), e^{i\tilde{u}_{\tilde{t}}}), 
\]
and
\begin{dmath*}
 \frac{\partial \tilde{u}_{\tilde{t}}}{\partial u_{\lambda^{-1}(\tilde{t})}} = -i \,\frac{M'_{\lambda^{-1}(\tilde{t})} \left(h^{-1}_{u_{\lambda^{-1}(\tilde{t})}}(1)\right) \,\left(\frac{\partial}{\partial u_{\lambda^{-1}(\tilde{t})}}h^{-1}_{u_{\lambda^{-1}(\tilde{t})}}(1)\right)}{(M_{\lambda^{-1}(\tilde{t})} \circ h^{-1}_{u_{\lambda^{-1}(\tilde{t})}})(1)}
= -i \,\frac{M'_{\lambda^{-1}(\tilde{t})} \left(h^{-1}_{u_{\lambda^{-1}(\tilde{t})}}(1)\right) \,\left( - \left(h^{-1}_{u_{\lambda^{-1}(\tilde{t})}}\right)'(1)\,\sigma(1)\right)}{(M_{\lambda^{-1}(\tilde{t})} \circ h^{-1}_{u_{\lambda^{-1}(\tilde{t})}})(1)}
=\frac{\left(M_{\lambda^{-1}(\tilde{t})} \circ h^{-1}_{u_{\lambda^{-1}(\tilde{t})}}\right)'(1)}{\left(M_{\lambda^{-1}(\tilde{t})} \circ h^{-1}_{u_{\lambda^{-1}(\tilde{t})}}\right)(1)} = \sqrt{\dot{\lambda}(\lambda^{-1}(\tilde{t}))}
\end{dmath*}
(here we use the normalization condition $\sigma(1) = -i$ and \eqref{eq:inverseflowpde} from Appendix ~\ref{app:inverseflows}).

Let $s, t \in [0, T_{\max{}}),$ $\tilde{s} = \lambda(s),$ $\tilde{t} = \lambda(t),$ $\Delta t= t-s,$ $\Delta \tilde{t} = \tilde{t} - \tilde{s},$ $\Delta u_t = u_t-u_s,$ and $\Delta \tilde{u}_{\tilde{t}} = \tilde{u}_{\tilde{t}} -\tilde{u}_{\tilde{s}}$. Let $W(t,z)$ denote the velocity field of $M_t(z)$. Then we can apply a multivariate version of Lagrange's mean value theorem , using the expressions for partial derivatives above, and write
\begin{dmath}
\label{eq:lagrange}
 \Delta \tilde{u}_{\tilde{t}} = -i e^{-i \tilde{u}_{\tilde{\theta}_1}} \dot{\lambda}^{-1}(\tilde{\theta}_1) \, W(\lambda^{-1}(\tilde{\theta}_1), e^{i\tilde{u}_{\tilde{\theta}_1}})\, \Delta \tilde{t} 
+ \sqrt{\dot{\lambda}(\lambda^{-1}(\tilde{\theta}_2))}\, \Delta u_t.
=\frac{W(\theta_1, e^{i\tilde{u}_{\tilde{\theta}_1}}) }{i\, e^{i \tilde{u}_{\tilde{\theta}_1}} \dot{\lambda}(\theta_1)} \,\Delta \tilde{t} + \sqrt{\dot{\lambda}(\theta_2)} \, \Delta u_t,
\end{dmath}
for some points $\theta_1, \theta_2$ lying between $s$ and $t,$ and $\tilde{\theta}_1 = \lambda(\theta_1),$ and $\tilde{\theta}_2 = \lambda(\theta_2)$.

Recall that a \emph{quasiarc} is the image of $[0,\infty)$ under a quasiconformal homeomorphism of $\mathbb{C}$.  Using the preliminary calculations above we can now prove in the general case sufficiently regular driving functions generate quasiarcs.
\begin{proposition}
Let $\{g_t\}_{t\geq 0}$ be a normalized slit Löwner chain in a simply connected hyperbolic domain $D$, driven by $b,$ $\sigma$ and $u_t$. Let $T>0$. If
\[
 \inf_{\epsilon >0} \mathop{\sup_{s,t\in[0,T]}}_{|t-s| < \epsilon} \frac{|u_t-u_s|}{\sqrt{|t-s|}} < 4,
\]
then $K_T = D\setminus g^{-1}_t(D)$ is a quasiarc.
\end{proposition}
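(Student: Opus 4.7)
The plan is to reduce the statement to a known quasislit criterion for the classical radial Löwner equation: if the driving function of a radial Löwner chain has local Hölder-$1/2$ seminorm strictly less than $4$, then the generated hulls are quasiarcs (this is the radial counterpart of Lind's theorem). I will transfer this result to the general slit setting through Theorem~\ref{th:toradial}.

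First, I choose any interior point $z^0 \in D \setminus K_T$; such a point exists because $K_T$ cannot coincide with all of $D$ in finite time. Applying the generalization of Theorem~\ref{th:toradial} indicated in the remark following its proof, I obtain a radial Löwner chain $\{\tilde{g}_{\tilde{t}}\}_{\tilde{t} \in [0, \tilde{T}]}$ fixing $z^0$, together with a $C^1$ time reparameterization $\lambda : [0, T] \to [0, \tilde{T}]$ with $\dot\lambda(t) > 0$, and a continuous driving function $\tilde{u}_{\tilde{t}}$, such that the generated families of hulls coincide.

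Next, I use the identity \eqref{eq:lagrange} to compare the local Hölder-$1/2$ seminorms of $\tilde{u}$ and $u$. The first summand on the right-hand side of \eqref{eq:lagrange} is $O(\Delta \tilde{t})$, uniformly on compact subintervals of $[0, T_{\max{}})$, and therefore contributes $o\bigl(\sqrt{|\Delta \tilde{t}|}\bigr)$ to the difference quotient as $|\Delta \tilde{t}| \to 0$. For the second summand, Lagrange's mean value theorem applied to $\lambda$ yields $\Delta \tilde{t} = \dot\lambda(\theta_3) \, \Delta t$ for some $\theta_3$ between $s$ and $t$, so
\[
\frac{\sqrt{\dot\lambda(\theta_2)}\,|\Delta u_t|}{\sqrt{|\Delta \tilde{t}|}} \;=\; \sqrt{\frac{\dot\lambda(\theta_2)}{\dot\lambda(\theta_3)}}\; \frac{|\Delta u_t|}{\sqrt{|\Delta t|}}.
\]
Continuity of $\dot\lambda$ forces $\dot\lambda(\theta_2)/\dot\lambda(\theta_3) \to 1$ as $|\Delta t| \to 0$. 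Consequently the local Hölder-$1/2$ seminorms of $\tilde{u}$ on $[0, \tilde{T}]$ and of $u$ on $[0, T]$ coincide, so both are strictly less than $4$ by hypothesis. The quasislit criterion for the radial equation then yields that $K_T$ is a quasiarc.

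The main delicate point is identifying the \emph{local} Hölder-$1/2$ seminorm (the $\epsilon \to 0$ limit) as the invariant under reparameterization: the global Hölder-$1/2$ norm could in principle change under the time change $\lambda$, but its limit as $\epsilon \to 0$ is preserved precisely because $\lambda \in C^1$ with strictly positive derivative. A secondary verification is that the generalized form of Theorem~\ref{th:toradial} for an arbitrary fixed interior point $z^0$ produces relations structurally analogous to \eqref{eq:lagrange}, so that the limit analysis of $\tilde{u}_{\tilde{t}}$ in terms of $u_t$ carries over verbatim.
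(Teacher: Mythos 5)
Your proposal is correct and follows essentially the same route as the paper: reduce to a radial chain via Theorem~\ref{th:toradial}, use \eqref{eq:lagrange} together with the continuity and positivity of $\dot{\lambda}$ to show the local H\"older-$1/2$ seminorm of $\tilde{u}$ does not exceed that of $u$, and invoke the radial quasislit criterion of Marshall--Rohde/Lind. The only cosmetic difference is that the paper simply assumes without loss of generality that $0\notin K_T$ (so the unmodified Theorem~\ref{th:toradial} and \eqref{eq:lagrange} apply directly), whereas you invoke the generalized version fixing $z^0$ from the remark; also, only the inequality between the seminorms is needed, not their coincidence.
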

\begin{proof}
Without loss of generality we assume that $D=\mathbb{D}$, and that $0 \not \in K_T$, so that $T < T_{\max{}},$  where $T_{\max{}}$ is the time defined in Theorem \ref{th:toradial}.

Let $\epsilon >0$, $0 \leq  s < t \leq T,$ $\tilde{s} = \lambda(s), $ $\tilde{t} = \lambda(t),$ and $\tilde{t} - \tilde{s} < \epsilon$.

Dividing \eqref{eq:lagrange} by $\sqrt{ \tilde{t}-\tilde{s}}$ yields

\begin{dmath*}
 \frac{|\tilde{u}_{\tilde{t}} -\tilde{u}_{\tilde{s}}|}{\sqrt{\tilde{t}-\tilde{s}}}
=\left|\frac{W(\theta_1, e^{i\tilde{u}_{\tilde{\theta}_1}}) }{i\, e^{i \tilde{\theta}_1} \dot{\lambda}(\theta_1)} \,\sqrt{\tilde{t}-\tilde{s}} + \sqrt{\dot{\lambda}(\theta_2)} \, \frac{u_t-u_s}{\sqrt{t-s}} \frac{1}{\sqrt{\frac{\tilde{t}-\tilde{s}}{t-s}}}\right|
\\=\left|\frac{W(\theta_1, e^{i\tilde{u}_{\tilde{\theta}_1}}) }{i\, e^{i \tilde{\theta}_1} \dot{\lambda}(\theta_1)} \,\sqrt{\tilde{t}-\tilde{s}} 
+ \sqrt{\dot{\lambda}(\theta_2)} \, \frac{u_t-u_s}{\sqrt{t-s}} \frac{1}{\sqrt{\dot{\lambda}(\theta_3)}}\right|
\end{dmath*}
for some $\theta_1, \theta_2,\theta_3\in (s,t),$ so that 
\begin{dmath*}
 \frac{|\tilde{u}_{\tilde{t}} -\tilde{u}_{\tilde{s}}|}{\sqrt{\tilde{t}-\tilde{s}}} 
\leq \frac{\max_{\theta \in [s, t]}\left| W(\theta, e^{i\tilde{u}_{\lambda(\theta)}})\right|}{\min_{\theta\in [s, t]} \sqrt{\dot{\lambda}(\theta)}}\,\sqrt{\epsilon}
+\frac{|u_t-u_s|}{\sqrt{t-s}} \frac{\max_{\theta\in [s, t]} \sqrt{\dot{\lambda}(\theta)}}{\min_{\theta\in [s, t]} \sqrt{\dot{\lambda}(\theta)}}.
\end{dmath*}

Due to the uniform continuity of $\dot{\lambda}(\theta)$ on $[0,T]$,
 $\sup_{|t-s| < \epsilon} \frac{\max_{\theta\in [s, t]} \sqrt{\dot{\lambda}(\theta)}}{\min_{\theta\in [s, t]} \sqrt{\dot{\lambda}(\theta)}}$ decreases monotonically to 1, as $\epsilon \to 0$, so that

\[
\inf_{\epsilon >0} \mathop{\sup_{\tilde{s},\tilde{t}\in[0,\tilde{T}]}}_{|\tilde{t}-\tilde{s}| < \epsilon} \frac{|\tilde{u}_{\tilde{t}}-\tilde{u}_{\tilde{s}}|}{\sqrt{|\tilde{t}-\tilde{s}|}} \leq \inf_{\epsilon >0} \mathop{\sup_{s,t\in[0,T]}}_{|t-s| < \epsilon} \frac{|u_t-u_s|}{\sqrt{|t-s|}} < 4,
\]
and by \cite[Theorem 1.1]{MarshallRohde}, $K_T$ is a quasiarc.
\end{proof}

\begin{proposition} \label{prop:angle}
 Let  $\{g_t\}_{t\geq 0}$ be a general normalized slit Löwner chain in $\mathbb{D},$ driven by $b$, $\sigma$ and $u_t$. Let the limit 
\[
 \lim_{t \to s+} \frac{u_t - u_s}{\sqrt{t-s}}
\]
exist for some $s\geq 0$.  Let  $\lambda$ and $\tilde{u}$ be the radial time reparameterization and the radial driving function defined in Theorem \ref{th:toradial}, respectively. Let $\tilde{s} = \lambda(s)$. Then 
\[
 \lim_{\tilde{t} \to \tilde{s}+} \frac{\utildenot - \tilde{u}_{\tilde{s}}}{\sqrt{\tilde{t}-\tilde{s}}} =  \,  \, \lim_{t \to s+} \frac{u_t - u_s}{\sqrt{t-s}}.
\]
\end{proposition}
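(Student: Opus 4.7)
The plan is to pass to the limit in the mean-value identity \eqref{eq:lagrange}, which already packages all the needed information: it expresses the increment $\Delta\tilde{u}_{\tilde{t}} := \tilde{u}_{\tilde{t}} - \tilde{u}_{\tilde{s}}$ as a linear combination of $\Delta\tilde{t}$ and $\Delta u_t$ with coefficients evaluated at intermediate points that collapse to $s$ (resp.\ $\tilde{s}$) in the limit. Concretely, dividing \eqref{eq:lagrange} by $\sqrt{\Delta\tilde{t}}$ and using $\Delta u_t/\sqrt{\Delta\tilde{t}} = (\Delta u_t/\sqrt{\Delta t})\sqrt{\Delta t/\Delta\tilde{t}}$ gives
\[
\frac{\Delta\tilde{u}_{\tilde{t}}}{\sqrt{\Delta\tilde{t}}} = \frac{W(\theta_1,e^{i\tilde{u}_{\tilde{\theta}_1}})}{i\,e^{i\tilde{u}_{\tilde{\theta}_1}}\dot{\lambda}(\theta_1)}\sqrt{\Delta\tilde{t}} + \sqrt{\dot{\lambda}(\theta_2)}\,\sqrt{\frac{\Delta t}{\Delta\tilde{t}}}\cdot\frac{\Delta u_t}{\sqrt{\Delta t}},
\]
with $\theta_1,\theta_2\in(s,t)$ and $\tilde{\theta}_1 = \lambda(\theta_1)$.

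Before passing to the limit, I would verify the mild regularity we need. The map $t\mapsto g_t$ is $C^1$ in $t$ (since $V(t,z) = {h^{-1}_{u_t}}_*\,b\,(z)$ depends continuously on $t$ and is a Herglotz field of order $\infty$), hence so is $t\mapsto M_t$ by its explicit formula in terms of $g_t(0)$ and $g'_t(0)$. Consequently the velocity field $W(t,z)$ is continuous in $t$, and by \eqref{eq:lambdaexpr} so is $\dot{\lambda}$. Theorem~\ref{th:main} guarantees $\dot{\lambda}(s) > 0$.

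With this in hand, the limit $t\to s+$ (equivalently $\tilde{t}\to\tilde{s}+$, by monotonicity of $\lambda$) is routine. The first summand is bounded in absolute value by a constant multiple of $\sqrt{\Delta\tilde{t}}$, so it tends to $0$. For the second summand, apply Lagrange's theorem to $\lambda$ itself to obtain $\Delta\tilde{t}/\Delta t = \dot{\lambda}(\theta_3)$ for some $\theta_3\in(s,t)$; as $t\to s+$ we have $\theta_2,\theta_3\to s$, so by continuity
\[
\sqrt{\dot{\lambda}(\theta_2)}\cdot\sqrt{\frac{\Delta t}{\Delta\tilde{t}}} = \frac{\sqrt{\dot{\lambda}(\theta_2)}}{\sqrt{\dot{\lambda}(\theta_3)}} \longrightarrow 1.
\]
The remaining factor $\Delta u_t/\sqrt{\Delta t}$ converges to the hypothesized limit $L$, and the claim follows.

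The only subtlety is the justification that the first summand stays bounded while $\sqrt{\Delta\tilde{t}}\to 0$; this rests entirely on the continuity of $\dot{\lambda}$ at $s$ and the fact that $\dot{\lambda}(s) \neq 0$, so this should not be a real obstacle. The rest is algebraic bookkeeping of square roots combined with continuity.
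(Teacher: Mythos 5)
Your argument is correct and is essentially the paper's proof: the paper likewise divides the mean-value identity \eqref{eq:lagrange} by $\sqrt{\Delta\tilde{t}}$ and lets $\Delta\tilde{t}\to 0+$, and your write-up simply makes explicit the bookkeeping (the first summand vanishing, the ratio $\sqrt{\dot{\lambda}(\theta_2)/\dot{\lambda}(\theta_3)}\to 1$) that the paper leaves implicit. One small attribution fix: the positivity and continuity of $\dot{\lambda}$ at $s$ follow directly from \eqref{eq:lambdaexpr} in Theorem~\ref{th:toradial} (and the paper's ``without loss of generality $0\in g^{-1}_s(\mathbb{D})$, so $s<T_{\max}$'' guarantees this makes sense), rather than from Theorem~\ref{th:main}.
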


\begin{proof}
Without loss of generality, we assume $0 \in g^{-1}_s(\mathbb{D})$, so that $s < T_{\max{}}$. Then we divide \eqref{eq:lagrange} by $\sqrt{\Delta \tilde{t}}, $ and let $\Delta \tilde{t} \to 0+$.
\end{proof}

\begin{corollary}
 Let $\gamma$ be a slit generated by a normalized slit L\"owner chain in $\mathbb{H}$, driven by $b,$ $\sigma$ and $u_t$. Let $\gamma(t)$ be  continuously differentiable on $(0,T]$, and $\dot{\gamma}(t)\neq 0$ for all  $t\in (0,T]$. Suppose there is $\theta\in (0,\pi)$ such that
\[
 \lim_{t\to 0+} \arg \dot{\gamma}(t)   = \theta.
\]
Then, 
\[
 \lim_{t\to 0+}\frac{u_t}{\sqrt{t}} = \frac{2(\pi - 2 \theta)}{\sqrt{\theta(\pi - \theta)}}.
\]
\end{corollary}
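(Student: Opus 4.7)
The plan is to transfer the problem to the unit disk, invoke Proposition~\ref{prop:angle} to reduce the original chain to a radial Löwner chain, and then apply a classical Kufarev-type formula relating the slit angle to the $\sqrt{t}$-coefficient of the radial driving function.

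First I would consider the conformal isomorphism $\phi:\mathbb{H}\to\mathbb{D}$, $\phi(z)=-(z-2i)/(z+2i)$, which sends $0\mapsto 1$ and satisfies $\phi'(0)=i$. The pushforward $\hat g_t:=\phi\circ g_t\circ\phi^{-1}$ is again a normalized slit Löwner chain, now in $\mathbb{D}$, driven by $(\phi_*b,\phi_*\sigma,u_t)$: the conformal invariance of representation~\eqref{eq:ellrepresentation} noted after Proposition~\ref{prop:slitFields} preserves the normalization, and the driving function is unchanged. The transferred slit $\hat\gamma=\phi\circ\gamma$ is $C^1$ on $(0,T]$ and starts at $1\in\partial\mathbb{D}$; by conformality of $\phi$ at $0$, its tangent $\dot{\hat\gamma}(0+)$ makes the same angle $\theta$ with the anticlockwise tangent to $\partial\mathbb{D}$ at $1$ as $\dot\gamma(0+)$ does with the positive real axis at $0$.

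Next I would apply Proposition~\ref{prop:angle} to $\hat g_t$ at $s=0$. Taking $s=0$ is legitimate since $0\in g_0^{-1}(\mathbb{D})=\mathbb{D}$, so by the proposition
\[
\lim_{\tilde t\to 0+}\frac{\tilde u_{\tilde t}}{\sqrt{\tilde t}}=\lim_{t\to 0+}\frac{u_t}{\sqrt{t}},
\]
where $\tilde u_{\tilde t}$ is the radial driving function of the equivalent radial Löwner chain produced by Theorem~\ref{th:toradial}. Since this radial chain generates the same hulls as $\hat g_t$, its trace is still $\hat\gamma$, with tip angle $\theta$ at $1$.

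Finally I would invoke the classical Kufarev-type formula for the radial Löwner equation in $\mathbb{D}$: if the generated curve is $C^1$ at the tip $1\in\partial\mathbb{D}$ with interior angle $\theta\in(0,\pi)$, then $\tilde u_{\tilde t}/\sqrt{\tilde t}\to 2(\pi-2\theta)/\sqrt{\theta(\pi-\theta)}$. This is the radial counterpart of Kufarev's classical theorem for the chordal equation \cite{Kufarev} and follows from it via the local conformal correspondence between the two equations at the slit tip; alternatively, one may apply Theorem~\ref{th:main} with chordal target data $(2\ell_{-2},\ell_{-1})$ in $\mathbb{H}$ and appeal to Kufarev's chordal formula directly. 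Combining this limit with the identity above yields the corollary. The main obstacle will be keeping the angle and orientation conventions consistent across the conformal transfer $\phi$ and the cited Kufarev formula: the coefficient $2(\pi-2\theta)/\sqrt{\theta(\pi-\theta)}$, and in particular the sign of its numerator, is sensitive to the direction chosen for the boundary tangent at the slit tip, so each normalization (the anticlockwise convention on $\partial\mathbb{D}$ inherited from $\phi$, the orientation of $\sigma$ encoded by $\sigma_{-1}=1$, and the radial reduction produced by Theorem~\ref{th:toradial}) must be tracked explicitly.
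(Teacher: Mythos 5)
Your proposal follows essentially the paper's own route: transfer the statement to the disk, use Proposition~\ref{prop:angle} (with $s=0$, via the radial reduction of Theorem~\ref{th:toradial}) to identify $\lim_{t\to 0+}u_t/\sqrt{t}$ with the corresponding limit for the radial driving function, and then quote the known asymptotics of the driving function of a slit meeting the boundary at angle $\theta$. The one caveat is attribution and availability of the final ingredient: that asymptotic formula is not a classical theorem of Kufarev but the chordal result of \cite[Theorem 1.2]{Wu2013}, which is exactly the external input the paper invokes, and the ``radial counterpart'' you assume is not off-the-shelf --- it is obtained precisely by the comparison you sketch as an alternative (e.g.\ treating the chordal chain as the $(2\ell_{-2},\ell_{-1})$-slit chain and applying Proposition~\ref{prop:angle} to it as well), which is what the paper's one-line proof means by applying the chordal theorem ``to the general setting''.
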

\begin{proof}
The corollary is a straightforward application of \cite[Theorem 1.2]{Wu2013} to the general setting, using the Proposition \ref{prop:angle} above.
\end{proof}

\subsection{Stochastic case}
\begin{lemma}
 Let $u_t = \sqrt{\kappa} B_t$ be the driving function of a normalized slit Löwner chain. Let $\tilde{u},$ $\lambda$, $\tilde{T}_{\max{}}$, $M$ and $W$ be defined as in Theorem \ref{th:toradial}. Then for $\tilde{t}\in [0, \tilde{T}_{\max{}}),$ $\utildenot$ satisfies the following SDE
\begin{dmath}
\label{eq:drivingrecalc}
 d\tilde{u}_{\tilde{t}} = 
\left(
\frac{W(\lambda^{-1}(\tilde{t}), e^{i\tilde{u}_{\tilde{t}}})}{i e^{i\tilde{u}_{\tilde{t}}}} \,\dot{\lambda}^{-1}(\tilde{t}) 
-\frac{\kappa}{2} \left(   
\frac{\left(M_{\lambda^{-1}(\tilde{t}) }\circ h^{-1}_{\sqrt{\kappa} \, B_{\lambda^{-1}(\tilde{t})}} \right)''(1)}{i e^{i \tilde{u}_{\tilde{t}}}}\, \dot{\lambda}^{-1}(\tilde{t}) + \sigma'(1) \sqrt{\dot{\lambda}^{-1}(\tilde{t})}+i\right)
\right)\, d\tilde{t}
+\sqrt{\kappa} \, d\tilde{B}_{\tilde{t}},
\end{dmath}
where $d\tilde{B}_{\tilde{t}} = \frac{1}{\sqrt{\dot{\lambda}^{-1}(\tilde{t})}}\, dB_{\lambda^{-1}(\tilde{t})}$. 

\end{lemma}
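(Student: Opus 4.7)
The plan is to apply It\^o's formula to the explicit representation $\tilde u_{\lambda(t)} = -i\log(M_t\circ h^{-1}_{u_t})(1)$ from Theorem~\ref{th:toradial}, taken with $u_t=\sqrt\kappa B_t$, and then reparameterize by $\tilde t = \lambda(t)$. I would introduce $F(t,u):=-i\log(M_t\circ h^{-1}_u)(1)$, viewed pathwise as a function of $(t,u)$ once the random family $\{M_t\}_{t\geq 0}$ is frozen along a Brownian path. Since $g_t(z)$, hence also $g_t(0)$, $g_t'(0)$, and $M_t$, is pathwise $C^1$ in $t$ (the Löwner vector field $V(t,\cdot)$ being continuous in $t$ when $u_t$ is continuous), the map $t\mapsto F(t,u)$ is $C^1$ with $\partial_t F(t,u)=-ie^{-iF}W(t,e^{iF})$, where $W(t,z)$ is the (random, $t$-continuous) velocity field of $\{M_t\}$. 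Thus $\tilde u_{\lambda(t)} = F(t,u_t)$ and It\^o's formula gives
\[
dF(t,u_t) = \Bigl(\partial_t F + \tfrac{\kappa}{2}\partial_u^2 F\Bigr)\,dt + \sqrt{\kappa}\,\partial_u F\,dB_t.
\]

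The first-order derivatives have already been worked out before the lemma: the identity $\partial_u h^{-1}_u(z) = -\sigma(z)(h^{-1}_u)'(z)$ from Appendix~\ref{app:inverseflows} together with the normalization $\sigma(1)=-i$ yields
\[
\partial_u F(t,u) = \frac{(M_t\circ h^{-1}_u)'(1)}{(M_t\circ h^{-1}_u)(1)} = \sqrt{\dot\lambda(t)}
\]
by \eqref{eq:lambdaexpr}. The key new computation is the second derivative in $u$. Writing $\phi_u := M_t\circ h^{-1}_u$, the identity $\partial_u\phi_u = -\sigma\,\phi_u'$ differentiated in $z$ gives $\partial_u\phi_u' = -\sigma'\phi_u' - \sigma\phi_u''$; differentiating $\partial_u F = \phi_u'(1)/\phi_u(1)$ once more in $u$, substituting $\sigma(1)=-i$, and using $(\phi_u'(1)/\phi_u(1))^2 = \dot\lambda(t)$, one obtains
\[
\partial_u^2 F = \frac{i\,\phi_u''(1)}{\phi_u(1)} - \sigma'(1)\sqrt{\dot\lambda(t)} - i\,\dot\lambda(t).
\]

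Finally, rewrite everything in the $\tilde t$ variable. Set $dt = \dot\lambda^{-1}(\tilde t)\,d\tilde t$ and define $\tilde B_{\tilde t}$ by $dB_{\lambda^{-1}(\tilde t)} = \sqrt{\dot\lambda^{-1}(\tilde t)}\,d\tilde B_{\tilde t}$; L\'evy's characterization, applied to this continuous martingale whose quadratic variation equals $d\tilde t$, shows $\tilde B$ is a standard Brownian motion in $\tilde t$. The diffusion coefficient collapses to $\sqrt\kappa\,\partial_u F\,\sqrt{\dot\lambda^{-1}(\tilde t)}=\sqrt\kappa$ because $\partial_u F = 1/\sqrt{\dot\lambda^{-1}(\tilde t)}$, while multiplying the drift by $\dot\lambda^{-1}(\tilde t)$ and applying $\sqrt{\dot\lambda(t)}\,\dot\lambda^{-1}(\tilde t) = \sqrt{\dot\lambda^{-1}(\tilde t)}$ and $\dot\lambda(t)\,\dot\lambda^{-1}(\tilde t) = 1$ reproduces \eqref{eq:drivingrecalc} term by term (the sign rearrangement $\phi_u''(1)/i = -i\phi_u''(1)$ matches the form stated in the lemma). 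The main obstacle will be the second-derivative calculation $\partial_u^2 F$: one has to differentiate $\partial_u\phi_u = -\sigma\,\phi_u'$ in $z$ before evaluating at $z=1$, since otherwise the $\sigma'(1)\sqrt{\dot\lambda^{-1}(\tilde t)}$ contribution to the It\^o correction is missed.
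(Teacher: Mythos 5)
Your proof is correct and is essentially the paper's argument: both rest on the same ingredients — the identity $\partial_u(M_t\circ h^{-1}_u)=-\sigma\cdot(M_t\circ h^{-1}_u)'$ from the inverse-flow PDE, the normalization $\sigma(1)=-i$, the identification $\partial_u\tilde u=\sqrt{\dot\lambda(t)}$, and the pathwise $C^1$ dependence of $M_t$ on $t$ — followed by the time change $\tilde t=\lambda(t)$. The only difference is bookkeeping: you apply It\^o's formula directly to $F(t,u)=-i\log(M_t\circ h^{-1}_u)(1)$ and compute $\partial_u^2F$, whereas the paper first writes the Stratonovich SDE for $\tilde u_{\tilde t}$ and then extracts the same correction term as the quadratic covariation of $1/\sqrt{\dot\lambda^{-1}(\tilde t)}$ with $B$; the resulting drift and the identification of $\tilde B$ as a standard Brownian motion agree term by term.
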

\begin{proof}
The expression
\begin{equation}
\label{eq:thito}
 \frac{1}{\sqrt{\dot{\lambda}^{-1} (\tilde{t})}}=\sqrt{\dot{\lambda}(\lambda^{-1}(\tilde{t}))} = \frac{\left(M_{\lambda^{-1}(\tilde{t})} \circ h^{-1}_{u_{\lambda^{-1}(\tilde{t})}}\right)'(1)}{\left(M_{\lambda^{-1}(\tilde{t})} \circ h^{-1}_{u_{\lambda^{-1}(\tilde{t})}}\right)(1)}= e^{-i \tilde{u}_{\tilde{t}}} \,\left(M_{\lambda^{-1}(\tilde{t})} \circ h^{-1}_{u_{\lambda^{-1}(\tilde{t})}}\right)'(1)
\end{equation}
may be formally regarded as a function of three independent variables $\tilde{t},$ $u_{\lambda^{-1}(\tilde{t})}$ and $\tilde{u}_{\tilde{t}}$. In particular, its partial derivatives satisfy
\[
 \frac{\sqrt{\dot{\lambda}(\lambda^{-1}(\tilde{t}))}}{\partial u_{\lambda^{-1}(\tilde{t})}} = i e^{-i \tilde{u}_{\tilde{t}}} \left(M_{\lambda^{-1}(\tilde{t})} \circ h^{-1}_{u_{\lambda^{-1}(\tilde{t})}}\right)''(1) - \frac{\sigma'(1)}{\sqrt{\dot{\lambda}^{-1}(\tilde{t})}} ,
\]
and
\[
 \frac{\partial \sqrt{\dot{ \lambda}(\lambda^{-1}(\tilde{t}))}}{\partial \tilde{u}_{\tilde{t}}}  = -\frac{i}{ \sqrt{\dot{\lambda}^{-1}(\tilde{t})}}.
\]

Let $u_t = \sqrt{\kappa} \, B_t$, $\sqrt{\kappa} \geq 0$. Then It\^{o}'s formula applied to \eqref{eq:thito} yields
\begin{dmath}
\label{eq:lambdaito}
 d \frac{1}{\sqrt{\dot{\lambda}^{-1}(\tilde{t})}} = \left(\ldots\right) \, d\tilde{t}-\frac{i}{ \sqrt{\dot{\lambda}^{-1}(\tilde{t})}} \circ d \tilde{u}_{\tilde{t}}  + \sqrt{\kappa}\,\left(i e^{-i \tilde{u}_{\tilde{t}}} \left(M_{\lambda^{-1}(\tilde{t})} \circ h^{-1}_{\sqrt{\kappa}\, B_{\lambda^{-1}(\tilde{t})}}\right)''(1) - \frac{\sigma'(1)}{\sqrt{\dot{\lambda}^{-1}(\tilde{t})}} \right)  \circ dB_{\lambda^{-1}(\tilde{t})}.
\end{dmath}

An application of It\^{o}'s formula to \eqref{eq:utildexpr} gives a stochastic version of \eqref{eq:lagrange}
\begin{dmath*}
 d\tilde{u}_{\tilde{t}} = \frac{W(\lambda^{-1}(\tilde{t}), e^{i\tilde{u}_{\tilde{t}}})}{i e^{i\tilde{u}_{\tilde{t}}}} \dot{\lambda}^{-1}(\tilde{t})\, d\tilde{t} + \frac{\sqrt{\kappa}}{\sqrt{\dot{\lambda}^{-1}(\tilde{t})}}\circ dB_{\lambda^{-1}(\tilde{t})} 
= \frac{W(\lambda^{-1}(\tilde{t}), e^{i\tilde{u}_{\tilde{t}}})}{i e^{i\tilde{u}_{\tilde{t}}}} \dot{\lambda}^{-1}(\tilde{t}) \,d\tilde{t} + \frac{\sqrt{\kappa}}{\sqrt{\dot{\lambda}^{-1}(\tilde{t})}}\, dB_{\lambda^{-1}(\tilde{t})}  
+ \frac{\sqrt{\kappa}}{2}\, d \frac{1}{\sqrt{\dot{\lambda}^{-1}(\tilde{t})}} \cdot dB_{\lambda^{-1}(\tilde{t})}.
\end{dmath*}

Using \eqref{eq:lambdaito}, we calculate the quadratic covariation
\begin{dmath*}
 \frac{\sqrt{\kappa}}{2}\,d \frac{1}{\sqrt{\dot{\lambda}^{-1}(\tilde{t})}} \cdot dB_{\lambda^{-1}(\tilde{t})} = -{\frac{\kappa}{2} \left(   
\frac{\left(M_{\lambda^{-1}(\tilde{t}) }\circ h^{-1}_{\sqrt{\kappa} \, B_{\lambda^{-1}(\tilde{t})}} \right)''(1)}{i e^{i \tilde{u}_{\tilde{t}}}}\, \dot{\lambda}^{-1}(\tilde{t}) + \sigma'(1) \sqrt{\dot{\lambda}^{-1}(\tilde{t})}+i\right)\, d\tilde{t}}, 
\end{dmath*}
and arrive at
\begin{dmath*}
 d\tilde{u}_{\tilde{t}} = 
\left(
\frac{W(\lambda^{-1}(\tilde{t}), e^{i\tilde{u}_{\tilde{t}}})}{i e^{i\tilde{u}_{\tilde{t}}}} \,\dot{\lambda}^{-1}(\tilde{t}) 
-\frac{\kappa}{2} \left(   
\frac{\left(M_{\lambda^{-1}(\tilde{t}) }\circ h^{-1}_{\sqrt{\kappa} \, B_{\lambda^{-1}(\tilde{t})}} \right)''(1)}{i e^{i \tilde{u}_{\tilde{t}}}}\, \dot{\lambda}^{-1}(\tilde{t}) + \sigma'(1) \sqrt{\dot{\lambda}^{-1}(\tilde{t})}+i\right)
\right)\, d\tilde{t}
+\sqrt{\kappa} \, d\tilde{B}_{\tilde{t}},
\end{dmath*}
where $d\tilde{B}_{\tilde{t}} = \frac{1}{\sqrt{\dot{\lambda}^{-1}(\tilde{t})}}\, dB_{\lambda^{-1}(\tilde{t})}$. Brownian scaling implies that $\tilde{B}_{\tilde{t}}$ is a standard Brownian motion.
\end{proof}

\begin{theorem}
\label{th:laweqiuivalence}
Let $\{K_t\}_{t\geq 0}$  be the family of random hulls generated by a normalized slit Löwner chain driven by $\sqrt{\kappa}\, B_t$. Let $\{\tilde{K}_{\tilde{t}}\}_{\tilde{t}\geq 0}$ be the family of radial $SLE_{\kappa}$-hulls. Let $\lambda$ and $T_{\max}$ be defined as in Theorem~\ref{th:toradial}. There exists a family of positive stopping times $\{T_n\}_{t\in \mathbb{N}}$, $T_n\to T_{\max}$,  such that the laws of $(K_t, t\in[0,T_n])$ and $\left(\tilde{K}_{\lambda(t)},t\in[0,T_n]\right)$ are absolutely continuous with respect to each other.
\end{theorem}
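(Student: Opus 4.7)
The plan is to use the preceding lemma together with Girsanov's theorem. Recall that radial $SLE_\kappa$ is, by definition, a radial Löwner chain driven by $\sqrt{\kappa}\,\tilde B_{\tilde t}$, and that by Theorem~\ref{th:toradial} the hulls $K_t$ and the radial hulls of the reparameterized chain are related by
\[
 K_t = \mathbb{D}\setminus g_t^{-1}(\mathbb{D}) = \mathbb{D}\setminus \tilde g_{\lambda(t)}^{-1}(\mathbb{D})
\]
on $[0,T_{\max{}})$. Hence the hulls are a measurable functional of the radial driving function $\tilde u_{\tilde t}$ (up to the radial time $\lambda(t)$), so it suffices to prove mutual absolute continuity of the laws of $\tilde u_{\tilde t}$ and of $\sqrt{\kappa}\,\tilde B_{\tilde t}$ on $[0,\lambda(T_n)]$ for a suitable sequence of stopping times $T_n\nearrow T_{\max{}}$.

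The SDE derived in the previous lemma has the form
\[
 d\tilde u_{\tilde t} = \mu(\tilde t)\, d\tilde t + \sqrt{\kappa}\, d\tilde B_{\tilde t},
\]
where $\tilde B_{\tilde t}$ is a standard Brownian motion (by Brownian scaling via $d\tilde B_{\tilde t}=\dot\lambda^{-1}(\tilde t)^{-1/2}dB_{\lambda^{-1}(\tilde t)}$), and $\mu(\tilde t)$ is an explicit expression in terms of $W$, $\dot\lambda^{-1}$, $\sigma'(1)$, and $(M_{\lambda^{-1}(\tilde t)}\circ h_{u_{\lambda^{-1}(\tilde t)}}^{-1})''(1)$. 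On any compact subinterval of $[0,T_{\max{}})$ each of these quantities is finite: $0\notin K_t$ for $t<T_{\max{}}$, so $M_t$ is a well-defined disk automorphism, $\dot\lambda(t)>0$ is continuous, and $W(t,\cdot)$ is locally bounded. Therefore, setting
\[
 T_n := \inf\Bigl\{t\geq 0 : \lvert \mu(\lambda(t))\rvert + \dot\lambda(t) + \dot\lambda(t)^{-1} + \lvert A_t \rvert \cdot n \geq n \Bigr\}\wedge n,
\]
(or any similar localizing sequence ensuring $\mu$ is bounded on $[0,\lambda(T_n)]$) gives a sequence of stopping times increasing a.s.\ to $T_{\max{}}$, on each of which $\mu$ is bounded.

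Since $\mu$ is bounded on $[0,\lambda(T_n)]$, Novikov's condition holds for the process $\kappa^{-1/2}\mu$, and so the exponential local martingale
\[
 Z_{\tilde t} := \exp\!\left( -\frac{1}{\sqrt{\kappa}}\int_0^{\tilde t}\mu(s)\,d\tilde B_s - \frac{1}{2\kappa}\int_0^{\tilde t}\mu(s)^2\,ds\right)
\]
is a true martingale on $[0,\lambda(T_n)]$. By Girsanov's theorem, under the measure $d\mathbb{Q}_n = Z_{\lambda(T_n)}\, d\mathbb{P}$ the process
\[
 \tilde B^{\mathbb{Q}_n}_{\tilde t} := \tilde B_{\tilde t} + \frac{1}{\sqrt{\kappa}}\int_0^{\tilde t}\mu(s)\,ds
\]
is a standard Brownian motion on $[0,\lambda(T_n)]$, and one has $\tilde u_{\tilde t} = \sqrt{\kappa}\,\tilde B^{\mathbb{Q}_n}_{\tilde t}$. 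Thus, under $\mathbb{Q}_n$, the process $(\tilde K_{\lambda(t)}, t\in[0,T_n])$ has the law of the radial $SLE_\kappa$ hulls up to time $\lambda(T_n)$. Because $Z_{\lambda(T_n)}>0$ a.s., the measures $\mathbb{P}$ and $\mathbb{Q}_n$ are mutually absolutely continuous on $\mathcal{F}_{T_n}$, which yields the desired absolute continuity of laws of $(K_t, t\in [0,T_n])$ and $(\tilde K_{\lambda(t)}, t\in [0,T_n])$.

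The main obstacle will be the verification that the drift $\mu$ can be controlled by a suitable localizing sequence of stopping times: one must check that the various terms entering $\mu$ (in particular $(M_t\circ h_{u_t}^{-1})''(1)$ and $\dot\lambda^{-1}(t)$, together with the vector field $W$) are indeed locally bounded in $t<T_{\max{}}$ and that the chosen $T_n$ do converge to $T_{\max{}}$ almost surely. This is essentially a geometric estimate, reflecting the fact that as long as the hull stays uniformly away from $0$ the normalizing automorphism $M_t$ has uniformly bounded Schwarzian-type quantities at the driving point; the remaining routine calculation is to write $\mu$ out, bound each summand, and conclude Novikov.
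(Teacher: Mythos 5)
Your proposal is correct and follows essentially the same route as the paper: take the SDE for the radial driving function from the preceding lemma, localize with stopping times so that the drift is bounded (and the hull stays away from the origin, so that $\lambda$, $M_t$ and the drift remain controlled), and then apply Girsanov's theorem to compare with $\sqrt{\kappa}\,\tilde B_{\tilde t}$. The paper handles your ``main obstacle'' exactly by including the condition $d_0(K_t)\geq \tfrac{1}{2n}$ (distance of the hull to $0$) in the definition of $T_n$, together with the bound on the drift coefficient and the cap $T_n\leq n$, and then argues as in Lawler--Schramm--Werner.
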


\begin{proof} For simplicity, we work in the unit disk. Let $a(\tilde{t}, \tilde{u}_{\tilde{t}})$ denote the coefficient at $d\tilde{t}$ in \eqref{eq:drivingrecalc}, and let $b(t)$ be the continuous process defined as $b(t) = a(\lambda(t), \tilde{u}_{\lambda(t)})$. Let $d_0(K) = \inf\{|z|:z\in K\}$ denote the distance from the set $K$ to the origin.

We define
\[
 T_n := \min\left\{n,\, \inf\{t: b(t) > n\}, \,\inf\left\{t:d_0(K_t) < \frac{1}{2n}\right\}\right\}, \quad  \tilde{T}_n =\lambda(T_n).
\]
Then we apply Girsanov's theorem to (\ref{eq:drivingrecalc}) and argue as in \cite[Proposition 4.2]{SLE2}.
\end{proof}

This observation has several important implications, in particular, most results about the local properties of classical $SLE$ hulls are also true for the hulls of general slit stochastic flows. In the following three corollaries we adapt results from \cite{RohdeSchrammBasic} and \cite{Beffara} to our setting.
\begin{corollary}
 Let $\{G_t\}_{t \geq 0}$ be a slit holomorphic stochastic flow in a simply connected hyperbolic domain $D$. Then the corresponding family of hulls $\{K_{t}\}_{t\geq 0}$ is generated by a curve with probability 1, i.e., there exists a curve $\gamma$ such that for each $t\geq 0,$ the evolution domain $D_t = D\setminus K_t$ is a connected component of $D\setminus \gamma[0,t]$. 
\end{corollary}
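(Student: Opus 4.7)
The plan is to transfer the curve-generation property from radial $SLE_\kappa$ to a general slit holomorphic stochastic flow by means of Theorem~\ref{th:laweqiuivalence}, using that ``being generated by a curve'' is an a.s.\ event and is thus preserved under mutually absolutely continuous changes of law. The underlying fact I would invoke is the Rohde--Schramm theorem: the hulls of radial $SLE_\kappa$ are a.s.\ generated by a continuous curve (the case $\kappa = 8$ is slightly subtler, but is established in the same paper \cite{RohdeSchrammBasic}).

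First, on the stochastic interval $[0, T_{\max{}})$, where $T_{\max{}}$ is the absorption time of the reference point $0$ from Theorem~\ref{th:toradial}, I would proceed as follows. Fix the stopping times $T_n \nearrow T_{\max{}}$ provided by Theorem~\ref{th:laweqiuivalence}. On $[0,T_n]$ the law of $\{K_t\}$ is mutually absolutely continuous with the law of the radial $SLE_\kappa$ hulls $\{\tilde K_{\lambda(t)}\}$. Since the event ``there exists a continuous $\tilde\gamma:[0,\lambda(T_n)]\to \hat D$ such that $\tilde K_{\tilde t}$ is the complement in $D$ of the connected component of $D\setminus\tilde\gamma[0,\tilde t]$ containing $0$'' has full measure for radial $SLE_\kappa$, its analogue has full measure for $\{K_t\}_{t\in[0,T_n]}$ as well. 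Taking $n\to \infty$ and using that the constructed curves are consistent (by uniqueness of the generating curve under the natural capacity parametrization), one obtains a continuous curve $\gamma$ defined on $[0,T_{\max{}})$ that generates the hulls there.

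Second, I would extend past $T_{\max{}}$ using the remark following Theorem~\ref{th:toradial}. For any interior point $z^0 \in D$ one has an analogous reduction to a radial Löwner chain centered at $z^0$, valid up to the absorption time $T^{z^0}_{\max{}}$ of $z^0$; the proof of Theorem~\ref{th:laweqiuivalence} carries over verbatim to this centered version. Choosing a countable dense subset $\{z^0_j\}_{j\in\mathbb{N}}\subset D$, I obtain for each $j$ a curve generating the hulls on $[0,T^{z^0_j}_{\max{}})$. Because $D_t$ is a nonempty open set for every $t\geq 0$, almost surely for each $t$ there exists $j$ with $z^0_j\in D_t$, i.e.\ $t<T^{z^0_j}_{\max{}}$; hence $\bigcup_j [0,T^{z^0_j}_{\max{}}) = [0,\infty)$ a.s. The local curves glue to a single global continuous curve $\gamma:[0,\infty)\to \hat D$ by the uniqueness of the generating curve on overlapping intervals (Corollary~\ref{cor:uniqueness}).

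The main technical point, and the one I would spend most care on, is the gluing step: one must check that the curves produced by different radial centers coincide on the overlap of their intervals of definition, and that the resulting global curve is continuous at each transition time. This reduces to the deterministic statement that, once $K_t$ is known to be a quasislit (or more generally generated by a Jordan arc) on a time interval, the generating curve is unique up to reparametrization, which is built into the radial Löwner framework and the remark after Theorem~\ref{th:toradial}. Everything else is a direct application of absolute continuity plus the Rohde--Schramm theorem.
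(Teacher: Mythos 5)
Your first step coincides with the paper's: compare the hulls with radial $SLE_\kappa$ hulls via the mutual absolute continuity of Theorem~\ref{th:laweqiuivalence} and import the Rohde--Schramm result, so that the hulls are a.s.\ generated by a curve on $[0,T_n]$. After that you diverge from the paper. The paper does \emph{not} push the comparison all the way to $T_{\max{}}$ and then change the radial center; instead it stops at the single stopping time $\tau_0=T_1$ and uses the fact that $G_t$ is a time-homogeneous diffusion: by the strong Markov property the law of $G_{\tau_0+t}\circ G^{-1}_{\tau_0}$ given $G_{\tau_0}$ equals the law of $G_t$, so the mapped-out hulls restart with the same law, giving a new stopping time $\tau_1$ with the same distribution as $\tau_0$ on which the curve property again holds; iterating, curve generation holds up to $\sum_n \tau_n$, which diverges a.s. This restart argument buys two things your route has to supply by hand: it never needs a radial reduction centered at points other than $0$, and the consistency of the curve across the concatenation times is automatic because each step extends the same chain. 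Your route, by contrast, stays entirely within absolute-continuity comparisons and is conceptually appealing (no induction, no appeal to the Markov restart), but it needs the re-centered machinery to actually exist.

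That is where the genuine gaps lie. First, Theorem~\ref{th:laweqiuivalence} is proved in the paper only for the radial reduction centered at $0$; the remark after Theorem~\ref{th:toradial} states the deterministic re-centered reduction without formulas and warns that \eqref{eq:lambdaexpr}--\eqref{eq:utexpression} become more complicated, so ``carries over verbatim'' is not available -- you would have to rederive the SDE for the re-centered radial driving function and re-run Girsanov, or alternatively conjugate the whole flow by a M\"obius automorphism sending $z^0_j\mapsto 0$ (and renormalize with $V_c,T_c$) so that the original, center-$0$ version of Theorem~\ref{th:laweqiuivalence} applies; either way this is real work your proposal elides. Second, your gluing step invokes Corollary~\ref{cor:uniqueness} and speaks of hulls ``generated by a Jordan arc'', but that corollary concerns simple curves and uniqueness of the embedding map, and for $\kappa>4$ the hulls are not simple (Corollary~\ref{prop:phases}), so it does not give uniqueness of the generating curve on overlaps. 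The correct uniqueness statement is that for a locally growing family of hulls the tip is recovered from the hulls themselves, e.g.\ $\gamma(t)=\bigcap_{\epsilon>0}\overline{K_{t+\epsilon}\setminus K_t}$ (equivalently, the curve is recovered as the boundary limit of $g_t^{-1}$ at the driving point), and with that substitute your gluing over the countable dense family of centers does go through. With these two repairs your argument is a valid, genuinely different proof; as written, both steps are gaps.
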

\begin{proof}
Let us denote the stopping time $T_1$ defined in the proof of Theorem \ref{th:laweqiuivalence} by $\tau_0$. Then the law of $\{K_t\}_{t\in [0, \tau_0]}$ is absolutely continuous with respect to the law of a stopped family of radial $SLE$ curves, in particular this family of hulls is generated by a curve almost surely.

The strong Markov property of $\{G_t\}_{t\geq 0}$ implies that the law of $G_{\tau_0 + t} \circ G^{-1}_{\tau_0}$ conditioned on $G_{\tau_0}$ is the same as the law of $\{G_t\}_{t\geq 0}$. Similarly, the law of the hulls $\tilde{K}_t = G_{\tau_0} \circ K_{\tau_0 +t}$ conditioned on $K_{\tau_0}$ is the same as the law of $\{K_t\}_{t\geq 0}$. In particular, there exists a stopping time $\tau_1$, having the same distribution as $\tau_0,$ such that the hulls $\{\tilde{K}_{t}\}_{t\in [0,\tau_1]}$ are generated by a curve almost surely. This implies that the original family of hulls $\{K_t\}_{t\geq 0}$ is almost surely generated by a curve not only on $[0,\tau_0]$, but also on $[0, \tau_0+\tau_1]$. 

Continuing by induction, we conclude that the hull is generated by a curve up to the time $\sum^{\infty}_{n=0} \tau_n$. A series of identically distributed positive random variables diverges with probability 1, and the corollary follows.
\end{proof}

The other two corollaries are proved similarly.

\begin{corollary}
\label{prop:phases}
 Let $\{G_t\}_{t\geq 0}$ be a normalized slit holomorphic stochastic flow driven by $b,$ $\sigma$ and $\sqrt{\kappa}B_t$. Let $\gamma$  denote the curve generating the hulls $\{K_{t}\}_{t\geq 0}$. Then, with probability 1,
\begin{itemize}
 \item if $0\leq \kappa \leq 4,$  $\gamma$ is a simple curve,
\item if $4 < \kappa <8,$   $\gamma$ has self-intersections,
 \item if $\kappa \geq 8,$  $\gamma$ is a space-filling curve.
\end{itemize}
\end{corollary}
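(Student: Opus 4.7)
The plan is to bootstrap the classical phase diagram for radial $SLE_\kappa$, due to Rohde--Schramm \cite{RohdeSchrammBasic} for $\kappa\le 4$ and $\kappa\ge 8$, and to Beffara \cite{Beffara} (together with Rohde--Schramm) for the intermediate range, to the general slit holomorphic stochastic flow by means of Theorem~\ref{th:laweqiuivalence} and the strong Markov property, exactly as in the preceding corollary that asserts the existence of a generating curve.

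First I would fix the stopping times $T_n$ constructed in the proof of Theorem~\ref{th:laweqiuivalence}, so that $T_n \to T_{\max{}}$ almost surely and the law of $(K_t, t\in[0,T_n])$ is absolutely continuous with respect to the law of the radial $SLE_\kappa$ hulls $(\tilde{K}_{\lambda(t)}, t\in[0,T_n])$. The three properties in question, namely
\begin{enumerate}[label=(\roman*)]
\item the generating curve $\gamma$ is simple;
\item $\gamma$ has self-intersections but is not space-filling;
\item $\gamma$ is space-filling;
\end{enumerate}
are each determined by the restriction of $\gamma$ to an arbitrary time interval $[0,T]$, and for radial $SLE_\kappa$ each of them holds almost surely on every such interval in the corresponding $\kappa$ range. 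Absolute continuity of measures preserves events of full (respectively zero) probability. Hence for each $n$, on the event $\{T_n > 0\}$ the curve $\gamma$ restricted to $[0,T_n]$ has the desired property almost surely.

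Next I would propagate the conclusion past $T_{\max{}}$ by the same inductive argument used in the preceding corollary. Set $\tau_0 := T_1$ as in the proof of Theorem~\ref{th:laweqiuivalence}. By the strong Markov property of the slit holomorphic stochastic flow $\{G_t\}_{t\ge 0}$, the process $G_{\tau_0 + \cdot}\circ G_{\tau_0}^{-1}$ conditioned on $\mathcal{F}_{\tau_0}$ has the same law as $\{G_t\}_{t\ge 0}$, so there is a stopping time $\tau_1$ (conditionally distributed as $\tau_0$) on which the shifted hulls inherit the phase property. Iterating, I obtain a sequence $\tau_0,\tau_1,\ldots$ of i.i.d.\ positive stopping times such that on each increment $[\sum_{k<n}\tau_k,\sum_{k\le n}\tau_k]$ the curve is, respectively, simple, self-intersecting but not space-filling, or space-filling. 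A series of i.i.d.\ strictly positive random variables diverges almost surely, so the union of these intervals covers $[0,\infty)$ and the corresponding property holds globally.

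The only genuine obstacle is that properties (i)--(iii) are not \emph{local} in the strict sense: being simple, for instance, requires the absence of self-intersections \emph{across} successive intervals, not only within one interval. I would handle this by noting that a self-intersection $\gamma(s)=\gamma(t)$ with $s<t$ in different blocks translates, under the conformal map $G_{\tau}$ at a block endpoint $\tau$ between $s$ and $t$, into the statement that the image curve returns to its initial point on the boundary of $D$. For $\kappa\le 4$ this is ruled out by the almost sure simplicity of the $SLE_\kappa$-like increment (together with the fact that the image curve starts from a boundary prime end and the hull has positive conformal radius from any interior point before $T_{\max{}}$). The space-filling case $\kappa\ge 8$ is automatic once each block is space-filling in its own evolution domain, because the union of the images exhausts $D$. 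The intermediate range then follows by complementation. This minor global-to-local reduction is the same argument as in \cite{RohdeSchrammBasic}, transferred through the conformal identification provided by Theorem~\ref{th:laweqiuivalence}.
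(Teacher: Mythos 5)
Your proposal is correct and follows essentially the same route as the paper, which simply says this corollary is ``proved similarly'' to the preceding one: transfer the radial $SLE_\kappa$ phase dichotomy through the absolute continuity of Theorem~\ref{th:laweqiuivalence} on the stopping times $T_n$, then iterate via the strong Markov property of the time-homogeneous diffusion $G_t$ with the divergent sum of i.i.d.\ positive stopping times. Your extra discussion of the global-versus-local issue (cross-block self-intersections for $\kappa\le 4$, exhaustion for $\kappa\ge 8$) addresses a subtlety the paper leaves implicit, and is handled in the right way, via the a.s.\ boundary avoidance of the $SLE_\kappa$-like increments transferred by the same absolute continuity.
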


\begin{corollary}
 The Hausdorff dimension of the curve generating the hulls  of a normalized slit Löwner chain driven by $\sqrt{\kappa}\, B_t$ is equal to $\min(2, 1+\kappa/8)$ with probability 1.
\end{corollary}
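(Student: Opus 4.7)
The plan is to transfer Beffara's theorem on the Hausdorff dimension of radial $SLE_\kappa$ (\cite{Beffara}) to the general slit setting via the law equivalence established in Theorem~\ref{th:laweqiuivalence}, mirroring the argument that yielded Corollary~\ref{prop:phases}. Beffara proved that the trace of radial $SLE_\kappa$ almost surely has Hausdorff dimension $\min(2, 1+\kappa/8)$; this is the target value.

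First I would invoke Theorem~\ref{th:laweqiuivalence} to obtain an increasing sequence of stopping times $T_n \to T_{\max{}}$ for which the law of the random hulls $(K_t)_{t\in[0,T_n]}$ is mutually absolutely continuous with the law of the radial $SLE_\kappa$ hulls $(\tilde{K}_{\lambda(t)})_{t\in[0,T_n]}$. Since the hulls are generated by curves (previous corollary), equivalence of laws passes from the hulls to the generating curves. Hausdorff dimension of a curve, regarded as a compact subset of $\mathbb{C}$, is a Borel-measurable functional, and almost-sure events are preserved under equivalent changes of measure, so $\dim_H \gamma[0, T_n] = \min(2, 1+\kappa/8)$ almost surely for every $n$. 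Countable stability of Hausdorff dimension together with $\gamma[0, T_{\max{}}) = \bigcup_n \gamma[0, T_n]$ then yields $\dim_H \gamma[0, T_{\max{}}) = \min(2, 1+\kappa/8)$ almost surely.

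If the curve is defined past $T_{\max{}}$, I would apply the strong Markov property of the slit holomorphic stochastic flow $\{G_t\}$ at time $T_{\max{}}$ to restart the argument from a new conformal configuration and obtain a further stopping time $\tau_1$ with the same dimensional property. Iterating exactly as in the proof of the curve-generation corollary, one exhausts the lifetime of $\gamma$ by countably many such intervals, on each of which the Hausdorff dimension is almost surely $\min(2, 1+\kappa/8)$; a final application of countable stability completes the proof.

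The main obstacle I anticipate is precisely the extension beyond $T_{\max{}}$: the identification with radial $SLE_\kappa$ via Theorem~\ref{th:toradial} breaks down once the interior marked point $0$ is swallowed, so the Markov-restart has to be set up carefully, choosing a new interior marked point after each restart and invoking conformal invariance to ensure that the successive restart times almost surely exhaust the full lifetime of the flow. Once this structural step is in place, the remainder is routine measure-theoretic transfer of almost-sure statements through absolutely continuous laws, and no further analytic input is needed.
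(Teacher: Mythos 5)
Your proposal is correct and follows essentially the same route as the paper: the paper proves this corollary "similarly" to the curve-generation corollary, i.e., by transferring the radial $SLE_\kappa$ result (here Beffara's dimension theorem) through the mutual absolute continuity of laws up to the stopping times of Theorem~\ref{th:laweqiuivalence}, and then exhausting the lifetime of the curve by iterating with the strong Markov property of the time-homogeneous flow $\{G_t\}$. Your additional care about restarting with a fresh interior marked point after the point $0$ is swallowed matches the remark following Theorem~\ref{th:toradial} and is exactly the mechanism implicit in the paper's induction.
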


\subsection{General formulation of the domain Markov property for the case of simple curves} 

The fact that the hulls of a general stochastic flow driven by $\sqrt{\kappa} \,B_t$ for $0\leq \kappa \leq 4$  are simple curves, allows us to formulate versions of the conformal invariance property and the domain Markov property for this general process.

For simplicity, we work in the unit disk $\mathbb{D}$.

Let us call a curve $\gamma \subset \hat{\mathbb{D}}$ a $(b,\sigma)$-\emph{admissible} curve, if the following conditions are satisfied:
\begin{enumerate}
 \item $\gamma$ is a simple curve such that $\gamma(0)=1,$ and $\gamma\setminus \gamma(0) \subset \mathbb{D}$,
 \item $\gamma$ is embeddable into a slit Löwner chain driven by $b$ and $\sigma$, i.e., there exists a continuous function $u_t,$ $u_0=0,$ and $t_0\geq 0,$ such that for the corresponding slit L\"owner chain $g_t$ driven by $b,$ $\sigma$ and $u_t,$ $g^{-1}_{t_0}(\mathbb{D}) = \mathbb{D} \setminus \gamma$.
\end{enumerate}

By Corollary~\ref{cor:uniqueness}, the conformal isomorphism $g_{t_0}: \mathbb{D}\setminus \gamma \to \mathbb{D}$ is completely determined by $\gamma$, so that we can use the notation 
\[g_{\gamma}:=g_{t_0}\quad \textrm{and}\quad G_{\gamma} := G_{t_0} = h_{u_{t_0}} \circ  g_{t_0} .\]

Recall that $P(D)$ denotes the set of prime ends of $D$ (see Section \ref{subsec:confinvdmp}). The map $g_{\gamma}$ extended to the homeomorphism $g^{\gamma}:(\mathbb{D}\setminus \gamma)\cup P(\mathbb{D}\setminus \gamma)  \to  \hat{\mathbb{D}} $  maps the tip of the curve $\gamma$ to the point $h^{-1}_{u_{t_0}}(1)$. The function $G_{\gamma}$ maps the tip of $\gamma$ to 1.

Let
\[
 \Omega_{\mathbb{D} } := \{\gamma: \gamma \textrm{ is a curve in }\hat{\mathbb{D}}\textrm{ starting from }1\},
\]
and define for a $(b,\sigma)$-admissible curve $\gamma_0$,
\[
 \Omega_{\mathbb{D} \setminus \gamma_0} := \{\gamma: \gamma \textrm{ is a curve in }(\mathbb{D} \setminus \gamma_0) \cup P(\mathbb{D} \setminus \gamma_0)\textrm { starting from the tip of } \gamma_0\}.
\]

Let $\{K_t\}_{t\geq 0}$ be the random family of hulls generated by the slit Löwner equation driven by $b$, $\sigma$ and $\sqrt{\kappa}\,B_t$. The family $K_t$ induces a measure on the family of curves $\Omega_{\mathbb{D}},$ which we denote by $(b,\sigma)$-$SLE^{\kappa}_{\mathbb{D}}$  . Let $0\leq \kappa \leq 4$. By Corollary~\ref{prop:phases} we know that in this case the measure $(b,\sigma)$-$SLE^{\kappa}_{\mathbb{D}}$ is concentrated on the subset of $\Omega_{\mathbb{D}}$ consisting of simple curves.

For any admissible curve $\gamma_0$  we can define the measure $(b,\sigma)\mathhyphen SLE^{\kappa}_{\mathbb{D}\setminus \gamma_0}$ on the family $\Omega_{(\mathbb{D} \setminus \gamma_0)}$ by putting
\[
 (b,\sigma)\mathhyphen SLE^{\kappa}_{\mathbb{D}\setminus \gamma_0} = G^{-1}_{\gamma_0} {}_* \,(b,\sigma)\mathhyphen \, SLE^{\kappa}_{\mathbb{D}},
\]
i.e., as a pushforward measure.

Note that this measure coincides with the measure induced by the equation
\[
 \begin{cases}
  \frac{\partial}{\partial t} g_t(z) = - (h^{-1}_{\sqrt{\kappa} \, B_t *} b ) (g_t(z)), \quad t\geq 0,\\
  g_t(z) = G_{\gamma_0}(z), \quad z\in \mathbb{D}\setminus \gamma_0.
 \end{cases}
\]

Thus we have obtained a family of measures $\{(b,\sigma)\mathhyphen SLE^{\kappa}_{\mathbb{D}\setminus \gamma}\}$  indexed by the set of all $(b,\sigma)$-admissible curves $\gamma$. This family of measures is \emph{conformally invariant} meaning that for two different admissible curves $\gamma_1, \gamma_2$ 
\[
 (b,\sigma)\mathhyphen SLE^{\kappa}_{\mathbb{D}\setminus\gamma_2} = ({G^{-1}_{\gamma_2}\circ G_{\gamma_1}})_{*} (b,\sigma)\mathhyphen SLE^{\kappa}_{\mathbb{D}\setminus\gamma_1}.
\]

Due to the fact that the process $G_t$ is a time-homogeneous diffusion, the family of measures $\{(b,\sigma)\mathhyphen SLE^{\kappa}_{\mathbb{D}\setminus \gamma}\}$ possesses the \emph{domain Markov property}:
\[
 (b,\sigma)\mathhyphen SLE^{\kappa}_{\mathbb{D}\setminus\gamma_0} (\{\gamma: \gamma|_{[s,+\infty)} \in B \} |\gamma[0,s] = \gamma_1) = (b,\sigma)\mathhyphen SLE^{\kappa}_{\mathbb{D}\setminus(\gamma_0\cup \gamma_1)} (B),
\]
for any Borel set $B\subset \Omega_{\mathbb{D}\setminus(\gamma_0\cup \gamma_1)}$.

%

\appendix

\section{Holomorphic semiflows in the unit disk}
\label{app:semigroups}
Let $\mathbb{D}= \{z:|z|<1\}$  denote the unit disk, let $\mathrm{Hol}(\mathbb{D},\mathbb{D})$ denote the set of holomorphic maps $\phi:\mathbb{D} \to \mathbb{D}$ of $\mathbb{D}$ into itself, and let $\mathrm{Aut}(\mathbb{D}) \subset \mathrm{Hol}(\mathbb{D},\mathbb{D})$  be the set of holomorphic automorphisms of $\mathbb{D}$.

\begin{definition}
A family $\{g_t\}_{t\geq 0} \subset \mathrm{Hol}(\mathbb{D}, \mathbb{D})$ is called a \emph{holomorphic semiflow} in $\mathbb{D}$ (or, alternatively, a one-parameter continuous semigroup of holomorphic self-mappings of $\mathbb{D}$) if 
\begin{enumerate}
 \item $g_0 = \id_{\mathbb{D}},$
\item $g_{t+s} = g_t \circ g_s,$ for $s,t \geq 0$,
\item $\lim_{t\to 0^{+}} g_t(z) = z$ for each $z\in\mathbb{D}$.
\end{enumerate}
\end{definition}
These conditions imply that the family $g_t$ is continuous in local uniform topology  and, moreover, differentiable on all $[0,+\infty)$ with respect to $t$.

In the case when $g_t \in \mathrm{Aut}(\mathbb{D})$ for all $t\geq 0,$ the semiflow $\{g_t\}_{t\geq 0}$ can be extended to a flow $\{g_t\}_{t \in \mathbb{R}}$ by setting $g_{-t} := g_t^{-1}$.

For every holomorphic semiflow $\{g_t(z)\}_{t\geq 0}$ there exists a unique holomorphic function $V:\mathbb{D} \to\mathbb{C},$ such that the semiflow $\{g_t(z)\}_{t\geq 0}$ is the unique solution to the initial-value problem
\begin{equation}
\label{eq:semigroupivp}
 \begin{cases}
  \frac{\partial}{\partial t}g_t(z) = V(g_t(z)),\\
  g_0(z) = z.
 \end{cases}
\end{equation}
The function $V$ is called the \emph{infinitesimal generator} of the semiflow $\{g_t \}_{t\geq 0}$.

Infinitesimal generators of flows  and semiflows are often called \emph{complete} and  \emph{semicomplete} holomorphic vector fields, respectively, referring to the fact that the problem (\ref{eq:semigroupivp}) has a solution for all $t\in \mathbb{R}$ in the case of complete fields, and for all $t \geq 0$ in the case of semicomplete fields.

There is a simple representation formula for semicomplete vector fields. A holomorphic function $V(z)$ is a semicomplete vector field if and only if it can be written in the form
\begin{equation}
\label{eq:polynomsemicomplete}
 V(z) = V(0) - z \, q(z) - \overline{V(0)} z^2,
\end{equation}
where $q(z)$ is a holomorphic function with $\re q(z) \geq 0$ (see \cite{ShoikhetSeimgroupsGFT}). Moreover, $V(z)$ is complete if and only if $q(z) = i b,$ $b\in \mathbb{R}$.

The set of all semicomplete vector fields in $\mathbb{D}$ forms a closed (in the local uniform topology) real cone, that is, the vector field
\[
 V(z) = \alpha V_1(z) + \beta V_2(z), \quad \alpha, \beta \geq 0,\]
is semicomplete, provided that $V_1(z)$ and $V_2(z)$ are. Similarly, complete fields form a vector space of real dimension 3.

\subsection{Inverse flows}\label{app:inverseflows}
Let $\{h_{t}\}_{t\in \mathbb{R}}$ be the flow generated by $V(z)$. Then the inverse flow $\{h_{-t}\}_{t\in \mathbb{R}} = \{h^{-1}_{t}\}_{t\in \mathbb{R}}$ solves the ODE initial-value problem
\begin{equation}
  \begin{cases}
  \frac{\partial}{\partial t}h_{-t}(z) = -V(h_{-t}(z)),\\
  h_0(z) = z,
 \end{cases}z\in \mathbb{D},
\end{equation}
and the PDE initial-value problem
\begin{equation}
\label{eq:inverseflowpde}
 \begin{cases}
  \frac{\partial}{\partial t} h_{-t}(z) = -h'_{-t}(z)\, V(z),\\
  h_0(z) = z,
 \end{cases}z\in \mathbb{D}.
\end{equation}

\section{Elementary differential geometry in \texorpdfstring{$\mathbb{C}$}{C}}
In this section we write explicit formulas for some most basic operations of differential geometry for the case when the manifold in question is a simply connected domain $D$ in $\mathbb{C}$  and all functions in consideration are holomorphic.

Throughout the section we assume that $\{g_t\}_{t\geq 0}$ is a holomorphic semiflow in  $D,$ and $V(z)$ is the corresponding semicomplete vector field:
\[
 \begin{cases}
  \frac{\partial}{\partial t} g_t(z) = V(g_t(z)),\\
  g_0(z) = z, \quad z\in D.
 \end{cases}
\]

\subsection{Pushforwards of vector fields by conformal maps}\label{app:pushforward}
Let $V_1$ be a vector field in $D_1$ corresponding to the holomorphic semiflow $\{g^1_t\}_{t\geq 0}$  in $D_1$. Consider a conformal isomorphism $\phi: D_1 \to D_2$. We \emph{push forward} the semiflow $\{g^1_t\}_{t\geq 0}$ to $D_2$ by $\phi,$ if we define $\{g^2_t\}_{t\geq 0} = \{ \phi \circ g^1_t \circ \phi^{-1}\}_{t\geq 0}$. 

Denote by $V_2$ the vector field on $D_2$ corresponding to the flow $\{g^2_t\}_{t\geq 0}$. Then the vector fields $V_1$ and $V_2$ are said to be  \emph{$\phi$-related}, and  $V_2$ is also called the \emph{pushforward} (or, the \emph{direct image}) of $V_1$ by $\phi$. We use the notation $V_2 = \phi_* V_1$.

One of the ways to find an explicit expression for $V_2 = \phi_* V_1$ is to do the following calculation:
\begin{dmath*}
 \frac{\partial}{\partial t} g^2_t(z) = \frac{\partial}{\partial t} \phi(g^1_t(\phi^{-1}(z))) = \phi'(g^1_t(\phi(z))\cdot V_1(g^1_t(\phi^{-1}(z))) 
=\phi'(\phi^{-1} (g^2_t(z))) \cdot V_1(\phi^{-1}(g^2_t(z))).
\end{dmath*}
Thus, the explicit formula for the pushforward of a vector field $V$ by a conformal isomorphism $\phi$ is
\begin{equation}
\label{eq:vfpf}
(\phi_* V)(z) = \phi'(\phi^{-1}(z)) V (\phi^{-1}(z)) =\frac{1}{{\phi^{-1}}'(z)}V(\phi^{-1}(z)). 
\end{equation}
Note that the pushforward operation is linear in $V$.
\subsection{Pushforward of a flow by itself}
Let $\{h_t\}_{t\in \mathbb{R}}$ be the flow generated by $V(z)$, let $s\in \mathbb{R}$ and let $\{\tilde{h}\}_{t\in \mathbb{R}}$ be the flow generated by $({h_s}{}_* \, V)(z)$. By definition, 
\[
 \tilde{h}_{t} = h_s \circ h_t \circ h^{-1}_{s} = h_{s+t-s} = h_t,
\]
so that $\tilde{h}_t = h_t$ for all $t\in \mathbb{R},$ and ${h_s} {}_* \, V = V$.

\section{Stochastic flows in complex domains}
In this appendix we give a summary of basic results related to flows of stochastic differential equations in a domain $D\subset\mathbb{C}$. The definitions and theorems below are in fact adaptations from general theory of stochastic flows on paracompact manifolds, which can be found, e.g., in \cite[Chapter III]{Kunita1984}, \cite[Chapter V]{IkedaWatanabe} or, in full generality, in \cite[Section 4.8]{KunitaBook}.

\subsection{Solutions to SDEs}

Let $D$ be a domain in $\mathbb{C},$ and $\hat{D}= D \cup \{\partial\}$  be its one-point compactification.

Let $b(t,z)$, $\sigma_1(t,z), \ldots, $ $\sigma_{m}(t,z)$ be time-dependent vector fields on $D$. We assume that the time parameter $t$ varies in some interval $[0,a],$ $a>0$.

Consider the following Stratonovich stochastic differential equation
\begin{equation}
\label{eq:stratonovichSDE}
 dG_t = b(t,G_t) \,dt + \sum_{k=1}^m \sigma_k (t,G_t) \circ dB_t^k.
\end{equation}

Denote by $\{\mathcal{F}_t\}$ the filtration \[\mathcal{F}_t = \sigma(B^k_u - B^k_v: 0\leq u \leq v \leq t, \, k = 1, \ldots, m).\] 

Let $z\in D,$ and suppose there exists an $\mathcal{F}_t$-stopping time $T(z, \omega)$  and a stochastic process $G_t(z, \omega),$  $0 \leq t \leq \min(T(x, \omega),a)$  such that the following conditions are satisfied:
\begin{enumerate}
 \item $G_t(z)$ is a continuous $\mathcal{F}_t$-semimartingale;
 \item for all $t <\min(T(z, \omega),a)$, $G_t(z)$ satisfies 
\[
 G_t(z) = z + b(t,G_t(z)) dt + \sum_{k = 1}^m \int_0^t \sigma_k(s,G_s(z)) \circ dB^k_s;
\]
\item almost surely, $T(z,\omega)<\infty$ implies that $\lim_{t\uparrow T(z,\omega)}G_t(z) = \partial$.
\end{enumerate}
Then $G_t(z)$ is called a \emph{maximal solution} of (\ref{eq:stratonovichSDE}) with the initial condition $G_0 = z$. The stopping time $T(z,\omega)$ is  called the \emph{explosion time} (or \emph{escaping time}) of $G_t(z)$.

It is known that for existence and uniqueness of a maximal solution it is sufficient to require that the vector fields  $b(t,z)$, $\sigma_1(t,z), \ldots, $ $\sigma_{m}(t,z)$ are $C^1$ in $t$ and $C^2$ in $z=x+iy$ \cite[Theorem 8.3]{Kunita1984}. 

\subsection{Stochastic flows}
The solution $G_t(z),$ considered as a function of the initial condition $z,$ is called the \emph{flow} of SDE (\ref{eq:stratonovichSDE}). 

Denote by $D_t$ the domain of $G_t(z)$ (i.e., $D_t(\omega):=\{z:T(z,\omega) >t\}$) and by $R_t$ its range. Then $G_t:D_t\to R_t$ is a homeomorphism \cite[Theorem 9.1]{Kunita1984}. Moreover, if we additionally require that the coefficients $b(t,z)$, $\sigma_1(t,z), \ldots, $ $\sigma_{m}(t,z)$ are $C^{k+1}$ in $z=x+i y,$ and $k+1$-st derivatives are $\alpha$-H\"older continuous for some $\alpha>0,$ then $G_t$ becomes a $C^{k}$-diffeomorphism, with $k$-th derivatives being $\beta$-H\"older continuous for any $0<\beta<\alpha$.

The most important result for us is contained the following theorem.

\begin{theorem}[{\cite[Theorem 5.7.]{Kunita1984}}]
The functions $G_t(z):D_t \to R_t$ are holomorphic in $z$ for each $t\in[0,a]$ if and only if the vector fields  $b(t,z)$, $\sigma_1(t,z), \ldots, $ $\sigma_{m}(t,z)$ are holomorphic in $z$.
\end{theorem}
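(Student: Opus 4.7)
The ``only if'' direction is straightforward: from the semimartingale decomposition of $t \mapsto G_t(z)$, the finite-variation part identifies $b(t, z)$ and the brackets $\langle G_\cdot(z), B^k\rangle$ identify $\sigma_k(t, z)$; both operations preserve holomorphy in the parameter $z$, so if each $G_t(\cdot)$ is holomorphic, then $b$ and the $\sigma_k$ must be holomorphic in $z$ as well.

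For the ``if'' direction, assume $b$, $\sigma_1, \dots, \sigma_m$ are holomorphic in $z$. Under the smoothness hypotheses of the appendix (real $C^{k+1}$ coefficients with H\"older derivatives), the flow $G_t: D_t \to R_t$ is a $C^1$-diffeomorphism in the real sense, so the Wirtinger derivative
\[
\Phi_t(z) := \frac{\partial G_t(z)}{\partial \bar z}
\]
is well defined and continuous in $(t, z)$. The goal is to prove $\Phi_t \equiv 0$, which by the Cauchy--Riemann equations is equivalent to the holomorphy of $G_t(\cdot)$ on each connected component of $D_t$.

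The plan is to differentiate the Stratonovich SDE in the parameter $\bar z$. Writing $\Psi_t := \partial G_t/\partial z$, the Wirtinger chain rule gives
\[
\frac{\partial}{\partial \bar z}\, b(t, G_t(z)) = \frac{\partial b}{\partial G}(t, G_t(z))\, \Phi_t(z) + \frac{\partial b}{\partial \bar G}(t, G_t(z))\, \overline{\Psi_t(z)},
\]
and the holomorphy assumption forces $\partial b/\partial \bar G \equiv 0$; the same happens for every $\sigma_k$. Hence the anti-holomorphic coupling through $\overline{\Psi_t}$ drops out, and $\Phi_t$ satisfies the homogeneous linear Stratonovich SDE
\[
d\Phi_t(z) = \frac{\partial b}{\partial z}(t, G_t(z))\,\Phi_t(z)\, dt + \sum_{k=1}^m \frac{\partial \sigma_k}{\partial z}(t, G_t(z))\, \Phi_t(z) \circ dB_t^k,
\]
with initial condition $\Phi_0(z) = \partial z/\partial \bar z = 0$. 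By uniqueness for linear SDEs with locally bounded coefficients on $[0, T(z) \wedge a)$, $\Phi_t(z) \equiv 0$ almost surely for each fixed $z$; joint continuity of $(t,z)\mapsto\Phi_t(z)$ then upgrades this to an almost-sure statement simultaneously on the random open set $\{(t,z): t < T(z)\wedge a\}$.

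The main obstacle is the rigorous justification of differentiating the stochastic integral in the spatial parameter $\bar z$ to obtain the linearized SDE for $\Phi_t$. This is a delicate point resting on the parameter-dependent stochastic calculus from Kunita's theory and the H\"older regularity that guarantees $G_t$ is a genuine $C^1$-diffeomorphism (so that the random fields $\Phi_t$, $\Psi_t$ have modifications jointly continuous in $(t,z)$ and satisfy the expected linearized equations). Once this interchange is licensed, the decisive observation is algebraic: holomorphy of $b$ and $\sigma_k$ suppresses precisely the terms that would couple $\Phi_t$ to $\overline{\Psi_t}$, leaving a scalar homogeneous linear equation whose vanishing initial condition propagates to all times.
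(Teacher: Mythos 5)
This theorem is not proved in the paper at all: it is quoted verbatim from Kunita (\cite[Theorem 5.7]{Kunita1984}) in the appendix, so there is no internal proof to compare against. Your sketch is, in outline, exactly the standard (Kunita-style) argument for the ``if'' direction: once the flow is known to be a $C^1$-diffeomorphism with derivative fields $\Phi_t=\partial G_t/\partial\bar z$, $\Psi_t=\partial G_t/\partial z$ satisfying the linearized Stratonovich equations, holomorphy of $b,\sigma_k$ kills the coupling to $\overline{\Psi_t}$, leaving a scalar homogeneous linear SDE for $\Phi_t$ with $\Phi_0=0$, whence $\Phi_t\equiv 0$ and $G_t$ is holomorphic on $D_t$; the one genuinely technical step, the existence of jointly continuous parameter-derivatives satisfying the linearized SDEs under the $C^{k+1}$/H\"older hypotheses, is precisely what Kunita's parameter-dependent stochastic calculus provides, and you correctly flag it as the load-bearing input rather than reproving it. The ``only if'' direction is the only place where your argument is thinner than it looks: identifying $\sigma_k$ via quadratic covariations and $b$ via the finite-variation part produces $\sigma_k(t,G_t(z))$ and $b(t,G_t(z))$ along trajectories, so to conclude holomorphy of the coefficients as functions on the spatial domain you need (a) locally uniform (in $z$) convergence of the approximating sums so that holomorphy passes to the limit, and (b) either composition with the holomorphic inverse $G_t^{-1}$ (which only yields holomorphy on the range $R_t$) or, better, flows started at arbitrary initial times $s$, so that the coefficients at time $s$ are recovered on all of $D$. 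These are routine repairs, not conceptual gaps, and the direction actually used in the paper (holomorphic coefficients imply holomorphic flow) is the one your sketch handles correctly.
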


In this paper we only work  with holomorphic $b(t,z)$, $\sigma_1(t,z), \ldots, $ $\sigma_{m}(t,z)$, and hence most formulas look just as neat as in the one-dimensional real case. For instance, the Stratonovich SDE
\[ dG_t = b(G_t) dt + \sum_{k=1}^m \sigma_k (G_t) \circ dB_t^k
\]
can be easily rewritten in the It\^o form as
\[
 dG_t = \left[b(t,G_t)  + \frac12 \sum_{k=1}^m \sigma_k (t,G_t) \sigma'_k(t,G_t) \right]\, dt+ \sum_{k=1}^m \sigma_k(t,G_t)\, dB_t^k,
\]
and vice versa (as always, $\sigma'$ denotes $z$-derivative). Nevertheless, Stratonovich equations are preferred, due to the fact that they obey usual calculus rules.

\subsection{Composition and inversion of flows}
\label{subs:compinvflow}
 Consider two flows in the same domain $D$: 

\begin{equation}
\label{eq:exampleflow1}
 \begin{cases}
  dG_t(z) = b(t, G_t(z)) \,dt + \sum_{k=1}^{m} \sigma_k (t, G_t(z)) \circ dB^k_t,\\
 G_0(z) = z,
 \end{cases} z\in D,
\end{equation}
and
\begin{equation}
\label{eq:exampleflow2}
 \begin{cases}
  d\tilde{G}_t(z) = \tilde{b}(t, \tilde{G}_t(z)) \,dt + \sum_{k=1}^{m} \tilde{\sigma}_k (t, \tilde{G}_t(z)) \circ dB^k_t,\\
 \tilde{G}_0(z) = z,
 \end{cases} z\in D,
\end{equation}
with explosion times $T(z,\omega)$ ans $\tilde{T}(z,\omega),$ respectively. 

Let $D_t:= \{z \in D: T(z,\omega)>t\},$ and $U(z,\omega):= \min[\inf\{t>0 : \tilde{G}_t(z) \not\in D_t\}, \tilde{T}(z,\omega)]$. Then the composite flow
$K_t(z):=G_t \circ \tilde{G}_t(z)$ is well-defined for $t < U(z,\omega),$ and satisfies 
\begin{equation}
\label{eq:compflow}
 dK_t = \left[ b(t, K_t) + {G_t}_{\,*} \tilde{b}(t, K_t)\right]\, dt + \sum_{k=1}^m\left[\sigma_k(t,K_t) + {G_t}_{\,*}\,\tilde{\sigma}(t, K_t) \right] \circ dB^k_t.
\end{equation}

The inverse flow $G^{-1}_t$ satisfies
\begin{dmath}
\label{eq:invflow}
 dG^{-1}_t(z) = -{G^{-1}_t(z)}_{*} b(t, G_t^{-1}(z))\, dt - \sum_{k=0}^m {G^{-1}_t(z)}_{*} \sigma_k(t, G_t^{-1}(z)) \circ dB^k_t \\
 = -{G^{-1}_t}'(z) \,b(t, z)\, dt - \sum_{k=0}^m {G^{-1}_t}'(z) \,\sigma_k(t, z) \circ dB^k_t.
\end{dmath}

The proofs of these formulas can be found, e.g.,  in \cite[pp.~268--270]{Kunita1981}, but they can also be formally derived by application of usual calculus rules to Stratonovich SDEs (\ref{eq:exampleflow1})-(\ref{eq:exampleflow2}). 

\subsection{Complete fields}
\label{app:completefields}
The following is a classical result saying that if the coefficients of a Stratonovich SDE are complete vector fields, then the SDE generates a stochastic flow of diffeomorphisms.
\begin{theorem}[{\cite[Theorem 5.1.]{Kunita1984}}]
\label{th:Kunita}
 Consider the time-homogeneous Stratonovich SDE
\[
 dH_t = b(H_t)\, dt  + \sum_{k=1}^m \sigma_k(H_t)\circ dB^k_t,
\]
with $b, \sigma_1, \ldots \sigma_k$ complete $C^{\infty}$ vector fields in a domain $D$. Suppose the Lie algebra $\mathfrak{g}$ generated by $b, \sigma_1, \ldots \sigma_k$ is of finite dimension and let $G$ be the associated Lie group. Then the flow $H_t$ takes values in $G$.
\end{theorem}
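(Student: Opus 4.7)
The plan is to lift the given Stratonovich SDE to a stochastic differential equation on the Lie group $G$ itself, solve it there (where global existence is automatic because the group is a finite-dimensional manifold and left-invariant vector fields are complete), and then recover the flow on $D$ as the group action. Concretely, I would first choose a basis $X_1,\ldots,X_n$ of $\mathfrak{g}$ and express the driving vector fields as
\[
b = \sum_{i=1}^n \beta^{i} X_i, \qquad \sigma_k = \sum_{i=1}^n s_k^{\,i} X_i, \quad k=1,\ldots,m,
\]
with constants $\beta^{i},\, s_k^{\,i}\in\mathbb{R}$. Since $G$ integrates $\mathfrak{g}$, each $X_i$ corresponds to a left-invariant vector field $\widetilde{X}_i$ on $G$, and the corresponding SDE
\[
d\xi_t \;=\; \sum_{i=1}^n\beta^{i}\,\widetilde{X}_i(\xi_t)\,dt \;+\; \sum_{k=1}^m\sum_{i=1}^n s_k^{\,i}\,\widetilde{X}_i(\xi_t)\circ dB^k_t, \qquad \xi_0 = e,
\]
admits a unique global solution $\xi_t\in G$ for all $t\geq 0$, by standard results on SDEs with smooth, complete (bounded in any left-invariant Riemannian metric) coefficients on a Lie group.

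Next, I would exploit the fact that $G$ acts on $D$ by the one-parameter subgroups generated by the basis fields (which are complete on $D$ by assumption). The map $\Phi:G\times D\to D$ given by this action is smooth, and for each fixed $g\in G$ the map $\Phi(g,\cdot):D\to D$ is a diffeomorphism. I would then verify, using the Stratonovich chain rule together with the infinitesimal relation $\frac{d}{dt}\Phi(\exp(tX_i),z)\big|_{t=0} = X_i(z)$ — together with the $G$-equivariance of $\Phi$ — that the process $H_t(z):=\Phi(\xi_t,z)$ solves the original SDE on $D$ with $H_0(z)=z$. By pathwise uniqueness of the solution of the SDE on $D$ (which holds because $b$ and $\sigma_k$ are smooth), $H_t = \Phi(\xi_t,\cdot)$ for all $t\geq 0$ almost surely. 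Hence $H_t$ is, for every $t$, a diffeomorphism of $D$ — in fact one induced by the group element $\xi_t\in G$.

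The main technical obstacle is making the passage from group-valued SDE to the domain-valued flow fully rigorous: one must check the equivariance/chain-rule identity
\[
d\,\Phi(\xi_t,z) \;=\; \sum_{i=1}^n\beta^{i}\,X_i(\Phi(\xi_t,z))\,dt + \sum_{k,i} s_k^{\,i}\,X_i(\Phi(\xi_t,z))\circ dB^k_t
\]
for the Stratonovich differential, which uses crucially that $\Phi(\exp(tX_i)\cdot g,z) = $ flow of $X_i$ at time $t$ applied to $\Phi(g,z)$, i.e.\ that the lift was made via \emph{left}-invariant fields so that the action intertwines correctly. Once that identity is established, non-explosion on $G$ automatically yields non-explosion on $D$ (so the explosion time is $T(z,\omega)=+\infty$ almost surely), and the diffeomorphism property of $\Phi(\xi_t,\cdot)$ is inherited from the group action, completing the proof.
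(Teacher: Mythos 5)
This statement is not proved in the paper at all: it is imported verbatim as Kunita's Theorem~5.1 and used as a black box in the proof of Theorem~\ref{th:ourKunita}, so the only meaningful benchmark is Kunita's classical argument. Your outline is essentially that classical argument: lift the equation to a (left-)invariant SDE on the group $G$ integrating $\mathfrak{g}$, where non-explosion is automatic, push the $G$-valued solution $\xi_t$ down through the action $\Phi:G\times D\to D$, check via the Stratonovich chain rule that $\Phi(\xi_t,z)$ solves the original equation, and identify it with $H_t(z)$ by pathwise uniqueness; non-explosion on $D$ and membership of the flow in $G$ then follow. So the route is the right one, and the points you flag (left- versus right-invariance so that the action intertwines correctly, and the equivariance identity for the Stratonovich differential) are exactly the computational steps one must carry out.

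The one genuine gap is the step you dispose of in half a sentence: the existence of a smooth \emph{global} action $\Phi:G\times D\to D$ whose infinitesimal generators realize $\mathfrak{g}$. You justify it by saying the basis fields $X_1,\dots,X_n$ are ``complete on $D$ by assumption'', but the hypothesis only makes $b,\sigma_1,\dots,\sigma_m$ complete; a basis of $\mathfrak{g}$ necessarily contains iterated brackets, and completeness of brackets of complete fields is false in general. What rescues the argument is Palais' integration theorem: a finite-dimensional Lie algebra of vector fields that is \emph{generated} by complete fields consists entirely of complete fields and integrates to an action of the associated simply connected Lie group. Without invoking this (or reproving it), you only obtain a local action, the equivariance identity holds only up to a positive stopping time, and the transfer of non-explosion from $G$ to $D$ — the crux of the theorem — does not go through. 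Adding that citation, together with the routine remark that the a.s.\ identity $H_t(z)=\Phi(\xi_t,z)$, proved for each fixed $z$, extends to the flow by continuity in $(t,z)$, would make the proposal complete.
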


\section{Explicit expression for \texorpdfstring{$T^\sigma$}{T^sigma}}
\label{app:tsigma}
 Let $\sigma(z)$ be a complete field, 
\[
 \sigma(z) = \alpha - i\beta z -\overline{\alpha} z^2,
\]
and denote by $\{h_t(z)\}_{t\in \mathbb{R}}$ the corresponding flow of automorphisms of the unit disk, extended to the closed unit disk $\overline{\mathbb{D}}$.

We look for values of $t,$ for which the flow  $\{h_t\}$ sends $1$ to some given point $e^{i\theta}\in \partial \mathbb{D},$ i.e.,
\begin{equation}
\label{eq:trequirement}
 h_t(1) = e^{i\theta}.
\end{equation}

Suppose $\sigma(e^{i\theta}) = 0$. In this case, $e^{i\theta}$ is a common fixed point of all maps $\{h_t(z)\}_{t\in \mathbb{R}}$, and there are no values of $t$ satisfying $h_t(1) = e^{i\theta}$  (unless $e^{i\theta}$ coincides with $1$, in which case the equality holds for all $t\in \mathbb{R}$). 

Otherwise, there is always at least one such $t,$ for which (\ref{eq:trequirement}) holds. 

Denote
\[
 D = -\beta^2 + 4 |\alpha|^2;
\]

The flow $\{h_t(z)\}_{t\in \mathbb{R}}$ is hyperbolic if $D=0,$ and then either $\beta = 2 |\alpha|$ or $\beta = - 2|\alpha|$. In the first case, $V(z) = - \overline{\alpha} \left(z + i \frac{|\alpha|}{\overline{\alpha}}\right)^2,$ and
\[
 t = \frac{\tan \frac{\theta}{2}}{|\alpha|-\im \alpha + \re \alpha \cdot \tan \frac{\theta}{2}}.
\]

In the second case, $V(z) = - \overline{\alpha} \left(z - i \frac{|\alpha|}{\overline{\alpha}}\right)^2,$ and
\[
 t = \frac{\tan \frac{\theta}{2}}{|\alpha|+\im \alpha - \re \alpha \cdot \tan \frac{\theta}{2}}.
\]

If $D>0$ then the flow is hyperbolic, and 
\[
 t = -\frac{2}{\sqrt{D}} \, \arctanh \frac{\sqrt{D} \,\re \left[(1- e^{-i\theta}) (2\alpha - i \beta)\right]}{|2\alpha-i\beta|^2 - \re \left[e^{-i\theta} (2\alpha -i\beta)^2\right]}.
\]

Finally, if  $D<0,$ the flow is elliptic (in particular, periodic), and there are infinitely many values of $t$ satisfying (\ref{eq:trequirement}):
\[
 t = -\frac{2}{\sqrt{-D}} \,\arctan \frac{\sqrt{-D}\, \re \left[(1-e^{-i\theta}) (2 \alpha - i\beta))\right]}{|2\alpha - i\beta|^2 - \re \left[e^{-i\theta} (2\alpha - i\beta)^2\right]} +\frac{2 \pi k}{\sqrt{-D}}, \quad k \in \mathbb{Z}.
\]

This motivates us to define a function
\[
 T^\sigma :  \{e^{i\theta}:\sigma(e^{i\theta})\neq 0, \theta \in \mathbb{R}\} \to \mathbb{R}
\]
as follows

\begin{equation}
\label{eq:tdefinition}
T^{\sigma}(e^{i\theta}) =
 \begin{cases}  \frac{\tan \frac{\theta}{2}}{|\alpha|-\im \alpha + \re \alpha \cdot \tan \frac{\theta}{2}}, \quad \textrm{if }\beta = 2 |\alpha|,\\
\frac{\tan \frac{\theta}{2}}{|\alpha|+\im \alpha - \re \alpha \cdot \tan \frac{\theta}{2}}, \quad \textrm{if }\beta = -2 |\alpha|, \\
-\frac{2}{\sqrt{D}} \, \arctanh \frac{\sqrt{D} \,\re \left[(1- e^{-i\theta}) (2\alpha - i \beta)\right]}{|2\alpha-i\beta|^2 - \re \left[e^{-i\theta} (2\alpha -i\beta)^2\right]}, \quad \textrm{if }D>0,\\
 -\frac{2}{\sqrt{-D}} \,\arctan \frac{\sqrt{-D}\, \re \left[(1-e^{-i\theta}) (2 \alpha - i\beta))\right]}{|2\alpha - i\beta|^2 - \re \left[e^{-i\theta} (2\alpha - i\beta)^2\right]},\quad \textrm{if } D <0,
 \end{cases}
\end{equation}
where $\sigma(z) =  \alpha - i\beta z -\overline{\alpha} z^2,$ and $D =-\beta^2 + 4 |\alpha|^2$.

The function is defined in such a way that
\[
 h_{T^{\sigma}(e^{i\theta})}(1) =e^{i\theta}.
\]
Since $h^{-1}_t = h_{-t},$ the following formula is also true
\[
 h^{-1}_{-T^{\sigma}(e^{i\theta})}(1) = e^{i\theta}.
\]


\bibliographystyle{plain}
\bibliography{sle}
\end{document}